\newcommand{\R}{\mathbb{R}}
\newcommand{\C}{\mathbb{C}}
\def\beq{\begin{equation}}
\def\eeq{\end{equation}}
\def\a{\alpha}
\def\d{\delta}
\def\e{\varepsilon}
\def\A {\mathcal A}
\def\H{\mathcal H}
\def\M{\mathcal M}
\newenvironment{res}
               {\begin{equation}
\begin{minipage}{0.85\textwidth}}
               { \end{minipage}\end{equation} }
\def\ber{\begin{res} }
\def\eer{\end{res}}
\numberwithin{equation}{section}
\newtheorem{thm}{Theorem}[section]
\newtheorem{theorem}[thm]{Theorem}
\newtheorem{lemma}[thm]{Lemma}
\newtheorem{lem}[thm]{Lemma}
\newtheorem{cor}[thm]{Corollary}
\newtheorem{proposition}[thm]{Proposition}
\newtheorem{dfn}[thm]{Definition}
\newtheorem{remark}[thm]{Remark}
\def\section{\@startsection {section}{1}{\z@}{3.5ex plus 1ex minus
    .2ex}{2.3ex plus .2ex}{\large\bf}}
    \def\subsection{\@startsection{subsection}{2}{\z@}{3.25ex plus 1ex minus
 .2ex}{1.5ex plus .2ex}{\bf}}
\def\a{\alpha}
\def\e{\epsilon}
\def\ka{\kappa}
\def\om{\omega}
\def\d{\delta}
\def\Xf{\mathfrak{X}}
\def\Yf{\mathfrak{Y}}
\def\A{\mathcal{A}}
\def\Bc{\mathcal{B}}
\def\Ze{\mathcal{Z}}
\def\Cc{\mathcal{C}}
\def\M{\mathcal{M}}
\def\A{\mathcal{A}}
\def\H{\mathcal{H}}
\def\L{\mathcal{L}}
\def\P{\mathcal{P}}
\def\Ri{\mathcal{R}}
\begin{document}
\title[support of the Plancherel measure]
{The central support of the Plancherel measure of an affine Hecke algebra}
\dedicatory{Dedicated to Victor Ginzburg at the occasion of his
fiftieth birthday}
\author{Eric Opdam}
\begin{abstract}We give conceptual proofs of certain basic properties of the
arrangement of shifted root hyperplanes associated to a root system
$R_0$ and a $W_0=W(R_0)$-invariant real valued parameter function
on $R_0$. The method is based on the role of this shifted
root hyperplane arrangement for the harmonic analysis of affine Hecke
algebras. In addition this yields a conceptual proof of the description
of the central support of the Plancherel measure of an affine Hecke
algebra given in\ \cite{O}.
\end{abstract}
\keywords{Affine Hecke algebra, Plancherel measure,
positivity, residue calculus, support}
\address{Korteweg de Vries Institute for Mathematics\\
University of Amsterdam\\
Plantage Muidergracht 24\\
1018TV Amsterdam\\
The Netherlands\\
email: opdam@science.uva.nl}
\date{\today}
\subjclass[2000]{Primary 20C08; Secondary 22D25, 22E35, 43A30}
\thanks{During the preparation of this paper the author
was partially supported by a Pionier grant of
the Netherlands Organization for Scientific Research (NWO)}
\maketitle\tableofcontents
\today
\section{Introduction}
Let $V$ be a Euclidean vector space and let
$R_0\subset V^*$ denote a reduced, integral root system
with Weyl group $W_0$ (we do not assume that $R_0$ spans $V^*$).
Let $k:R_0\to\mathbb{R}$ be a $W_0$-invariant real function
on $R_0$. All that follows depends on this function, but we
usually suppress it from the notation.
\begin{thm}\label{thm:1}
Let $o=o(R_0,V,k;\cdot):V\to\mathbb{Z}$ be the function on
$V$ defined by
\begin{equation}
o(v):=|\{\a\in R_0\mid \a(v)=k_\a\}|-|\{\a\in R_0\mid \a(v)=0\}|-\operatorname{dim}(V)
\end{equation}
Then $o\leq 0$, and $o(v)=0$ for at most finitely many $v\in V$.
\end{thm}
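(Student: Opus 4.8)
The plan is to identify $o$ with the order function of the rational density that governs the harmonic analysis of the graded affine Hecke algebra $\mathcal{H}=\mathcal{H}(R_0,k)$, and then to extract both assertions from positivity of its Plancherel measure together with an induction on $\operatorname{rank}(R_0)$ by parabolic restriction. First I would rewrite $o$ in terms of orders of vanishing: fixing a choice $R_0^+$ of positive roots and pairing $\a$ with $-\a$ (so that $k_{-\a}=k_\a$), the function
\[
m(v):=\prod_{\a\in R_0}\frac{\a(v)-k_\a}{\a(v)}=\prod_{\a\in R_0^+}\Bigl(1-\frac{k_\a^2}{\a(v)^2}\Bigr)
\]
is $W_0$-invariant and rational on $V_\C$, and because its numerator and denominator are products of affine-linear forms, $\operatorname{ord}_v(m)=|\{\a\in R_0:\a(v)=k_\a\}|-|\{\a\in R_0:\a(v)=0\}|$, i.e.\ $o(v)=\operatorname{ord}_v(m)-\operatorname{dim}(V)$. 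Up to a positive constant $m^{-1}$ is the density of the Plancherel measure of $\mathcal{H}$ along its most continuous series and is regular and nonnegative on the imaginary locus $iV$, so Theorem~\ref{thm:1} is equivalent to the statement that $m^{-1}$ has a pole of order at most $\operatorname{dim}(V)$ at every point, and a pole of order exactly $\operatorname{dim}(V)$ at only finitely many points, namely the residual points of $(R_0,k)$.

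For finiteness of $\{v:o(v)=0\}$ I would argue representation-theoretically: at such a point the iterated residue of $m^{-1}$ is defined and nonzero, and equals (up to a positive constant) the formal degree of a discrete series representation of $\mathcal{H}$ with central character $W_0v$. Since the $C^*$-completion $C^*_r(\mathcal{H})$ has commutative centre mapping onto $C_0$ of a locally compact space with all central fibres of dimension $|W_0|$, the discrete series is finite, hence so is $\{v:o(v)=0\}$.

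For the inequality $o\le 0$ I would induct on $\operatorname{rank}(R_0)$. Given $v$, put $R_v=R_0\cap v^\perp$, a parabolic subsystem, and $U=\langle R_v\rangle^\perp\ni v$, so $\operatorname{dim}(U)=\operatorname{dim}(V)-\operatorname{rank}(R_v)$. Localising $m$ at $v$, the factors indexed by $R_v$ contribute a pole of order $|\{\a\in R_v:k_\a\ne 0\}|$ (which only helps the inequality), while those indexed by $R_0\setminus R_v$ contribute $+|\{\a\notin R_v:\a(v)=k_\a\}|$, a count depending only on the restrictions $\a|_U$. These restrictions form a (generally non-reduced) root system on $U$ through which $v$ lies on no hyperplane, so the inductive form of the theorem on $U$ bounds that count by $\operatorname{dim}(U)$ up to a correction accounting for the non-reduced multiplicities; combining with the $R_v$-contribution should yield $\operatorname{ord}_v(m)\le\operatorname{dim}(V)$, with equality forcing both inputs to be extremal simultaneously. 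The base case $\operatorname{rank}(R_0)\le 1$ is a direct verification.

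The hard part — the step that genuinely needs the Hecke-algebra input rather than bare combinatorics — is making this induction precise. One must show that the transverse residue of $m^{-1}$ along the residual coset through $v$ is, after correcting by the $c$-function of the parabolic subalgebra $\mathcal{H}_{R_v}$, again the Plancherel density of a lower-rank Hecke algebra, so that it is a genuine measure rather than a higher-order distribution; it is precisely positivity of the Plancherel measure that forbids ``more than top order'' residues. Keeping track of the non-reduced restricted root system on $U$, of how its induced parameters interact with the term $|\{\a\in R_v:k_\a\ne 0\}|$, and of the fact that the inductive hypothesis must be applied to data slightly more general than a single $W_0$-invariant parameter function, is the main technical obstacle.
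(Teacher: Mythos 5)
Your high-level strategy is the right one and is indeed the one the paper pursues: rewrite $o$ as the order function of the rational density $\eta$, and then use positivity of the Plancherel measure to rule out residues of order exceeding $\operatorname{codim}(L)$. That intuition (your ``positivity forbids more than top-order residues'') is precisely the content of Corollary~\ref{cor:form2} in the paper. But the execution you sketch diverges substantially from the actual proof, and at the points where it diverges there are genuine gaps.

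First, the argument hinges on knowing that the residue distribution at a residual point (or more generally along a residual coset) is actually \emph{nonzero}; otherwise positivity gives no information. This non-cancellation statement is precisely Lemma~\ref{lem:nc} (going back to \cite[Theorem 3.29]{O}), and it is the hardest single ingredient. You assert without justification that ``the iterated residue of $m^{-1}$ is defined and nonzero.'' That assertion is exactly what needs proof, and proving it requires the careful path-tracking argument in the residue calculus, including a Weyl-group equivariance trick (Proposition~\ref{prop:wp}) to move the base point into a favourable chamber. Second, your finiteness argument is invalid as stated: bounding the dimension of central fibres of $C^*_r(\H)$ by $|W_0|$ bounds the number of discrete series with a \emph{given} central character, but does not rule out infinitely many central characters of discrete series (say with formal degrees tending to zero). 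The paper instead proves finiteness by a short combinatorial rank induction: every residual point lies on a residual line $L$ (Lemma~\ref{lem:lines}); on a residual line $o$ equals $-1$ generically, so each line contains finitely many residual points; and there are finitely many residual lines by induction on rank.

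Third, your inductive step for $o\le 0$ via the restricted ``root system'' $\{\a|_U:\a\notin R_v\}$ on $U=\langle R_v\rangle^\perp$ cannot invoke the theorem inductively, because these restrictions are not a reduced integral root system with a $W$-invariant parameter function --- they carry multiplicities and lengths outside the scope of the statement. You flag this yourself, but it is not a technical inconvenience to be absorbed; it is a different and a priori harder statement. The paper avoids this entirely. Its induction is on the \emph{codimension of residual cosets}, working in the transverse torus $T_L$ with the genuine parabolic subsystem $R_L$, and it simultaneously proves three interlocking assertions (A), (B), (C) of Theorem~\ref{thm:support}: nonvanishing of $\Xf_{wL}$, $o_L=0$, and $M^t\not\subset L^t$ for distinct residual cosets. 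The interlocking is essential --- (C) at lower codimension is needed to apply Corollary~\ref{cor:form2} at the current codimension, and your sketch does not track this. Finally, $o_L\le 0$ for \emph{arbitrary} cosets $L$ (Theorem~\ref{thm:mainbdd}) is deduced from the case of residual cosets by a short but necessary argument passing to the minimal component of pole hyperplanes through $L$; your proposal does not distinguish these steps. In short, you have the right guiding principle, but the non-cancellation lemma, the correct inductive variable, and a valid finiteness argument are all missing.
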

Observe that Theorem \ref{thm:1} implies that the set of zeroes of $o$
is nonempty only if $R_0$ spans $V^*$.
\begin{thm}\label{thm:2}
Let $v\in V$ be such that $o(v)=0$. Then $-v\in W_0v$.
\end{thm}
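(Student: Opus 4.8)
The plan is to identify the zero set of $o$ with a set of central characters arising in the harmonic analysis of an affine Hecke algebra, and then to exploit the unitarity of the discrete series.

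I would first reduce to the case where $R_0$ is irreducible and spans $V^*$. By the remark following Theorem~\ref{thm:1}, $o(v)=0$ already forces $R_0$ to span $V^*$. Decomposing $R_0=\coprod_i R_i$ into irreducible components and $V=\bigoplus_i V_i$ accordingly, one has $o=\sum_i o_i$, and since each $o_i\le 0$ by Theorem~\ref{thm:1} applied to $(R_i,V_i,k|_{R_i})$, the hypothesis $o(v)=0$ forces $o_i(v_i)=0$ for every $i$; as $-v\in W_0v$ if and only if $-v_i\in W(R_i)v_i$ for each $i$, it suffices to treat a single irreducible, spanning component. (If $-\mathrm{id}_V\in W_0$, which holds unless $R_0$ is of type $A_n$ $(n\ge 2)$, $D_{2n+1}$ or $E_6$, the statement is trivial; so the content lies in those three simply-laced, single-parameter families, but the argument below is uniform.)

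Next I would use the interpretation from~\cite{O} of a point $v$ with $o(v)=0$ — a residual point — as a central character of a discrete series representation. Choose an affine Hecke algebra $\mathcal H$ realizing $(R_0,V,k)$, in the sense that $V$ is the Lie algebra of the split real part of its torus $T$, $R_0$ is its finite root system, and its parameters are normalized so that $\exp(v)\in T$ is a residual point of $\mathcal H$ precisely when $o(v)=0$. Then $W_0\exp(v)$ is the central character of a discrete series representation $\delta$ of $\mathcal H$. Since $\delta$ occurs with strictly positive formal degree in the Plancherel decomposition of the Hilbert algebra $(\mathcal H,*)$, it is unitary for the natural anti-involution $*$, hence isomorphic to its $*$-Hermitian dual $\delta^{*}$. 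Now $*$ acts on the centre $Z(\mathcal H)=\mathbb C[T]^{W_0}$ by $t\mapsto \bar t^{-1}$, so $\delta^{*}$ has central character $W_0\exp(v)^{-1}=W_0\exp(-v)$, since $\exp(v)$ is real. Comparing central characters gives $W_0\exp(v)=W_0\exp(-v)$, and as $\exp$ is a $W_0$-equivariant isomorphism of $V$ onto the split real part of $T$, this means $-v\in W_0v$.

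The load-bearing step is the triple of inputs from~\cite{O}: that a residual point of $\mathcal H$ genuinely supports a discrete series representation, that discrete series representations are unitary for the Hilbert-algebra involution, and that this involution inverts the central character; verifying the exact match between the combinatorial condition $o(v)=0$ and residuality for a suitably chosen $\mathcal H$ (the normalization of parameters, and the fact that only the behaviour near the identity coset is needed) is where care is required. Granting these and Theorem~\ref{thm:1}, the reductions and the character comparison are routine. Alternatively, in the three remaining simply-laced cases one could appeal to the classification of residual points at equal parameters by distinguished unipotent classes, together with $h\sim_{W_0}-h$ for the semisimple part of an $\mathfrak{sl}_2$-triple; but that is exactly the case-by-case detour the present method is designed to avoid.
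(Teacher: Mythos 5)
Your argument is correct and follows essentially the same route as the paper: Theorem \ref{thm:support} places $W_0\exp(v)$ in the central support of the Plancherel measure, and positivity/unitarity with respect to $*$ (which acts on $\operatorname{MaxSpec}(\Ze)=W_0\backslash T$ by $W_0t\mapsto W_0\overline{t}^{-1}$) then forces $W_0\exp(v)=W_0\exp(-v)$. The only difference is one of packaging: the paper extracts this invariance directly from the positivity of the local tracial states $\chi_t$ in the decomposition of $\tau|_{\Ze}$ (Proposition \ref{prop:posmeas}), so it never needs an actual discrete series representation at $W_0\exp(v)$, whereas your version leans on that existence --- which must be taken from the present paper's Theorem \ref{thm:support} and its final corollary (isolated points of $S$ carry atoms of $\nu$), since citing it from \cite{O} alone would be circular, the proofs there resting on Theorems \ref{thm:1}--\ref{thm:3} as input.
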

The zeroes of $o$ in $V$ are called ``residual points'' (for
$R_0$ with respect to the parameter function $k$).
More generally let $L\subset V$ be an affine subspace. We define
\begin{equation}
o_L:=|\{\a\in R_0\mid \a|_L=k_\a\}|-|\{\a\in R_0\mid \a|_L=0\}|-\operatorname{codim}(L)
\end{equation}
Let us write $L=v_L+V^L$ where $V^L\subset V$ is a linear subspace
and where
$\{v_L\}= V_L\cap L$ with $V_L=(V^L)^\perp$.
Denote by $R_L\subset R_0$ the parabolic subsystem of
roots that are constant on $L$. Then it is clear that
$R_L\subset V_L^*$ and that $o_L=o(R_L,V_L,k|_{R_L};v_L)$.
Hence Theorem \ref{thm:1} implies that $o_L\leq 0$ and that $o_L=0$
iff $v_L\in V_L$ is an $R_L$-residual point. Hence $V_L^*$ is spanned by
$R_L$ if $o_L=0$. Thus Theorem \ref{thm:1} implies that
there exist only finitely many affine subspaces $L$ with $o_L=0$.

Affine subspaces $L\subset V$ with $o_L=0$ are called
``residual affine subspaces'', and the point $v_L\in L$ such that
$\{v_L\}= V_L\cap L$ is called the center of $L$.
The $W_0$-orbits of the (centers of the) affine residual subspaces
should be thought of as parameter deformations of weighted Dynkin
diagrams of nilpotent orbits. The residual affine subspaces or rather
their toric analogs called ``residual cosets'' play a mayor role
in the description of the support of the Plancherel measure of the affine
Hecke algebra, see Theorem \ref{thm:support}.
\begin{thm}\label{thm:3}
If $L\subset V$ is a residual affine subspace of positive dimension
with center $v_L$. Then $v_L$ is not a residual point for $R_0$.
\end{thm}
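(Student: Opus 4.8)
\emph{Proof plan.} The idea is to reduce Theorem~\ref{thm:3} to a statement about the single point $v_L$, and then to apply Theorem~\ref{thm:1} not only to $v_L$ but to a whole family of affine subspaces through $v_L$, using Theorem~\ref{thm:2} to settle the borderline case.

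\emph{Step 1 (reduction).} Write $P(v):=\{\alpha\in R_0:\alpha(v)=k_\alpha\}$ and $Z(v):=\{\alpha\in R_0:\alpha(v)=0\}$, so $o(v)=|P(v)|-|Z(v)|-\dim V$, and set $S_a:=P(v_L)\setminus R_L$, $S_b:=Z(v_L)\setminus R_L$. Every root of $R_L$ is constant on $L$ with value its value at $v_L$, and $\dim V=\dim V_L+\dim V^L$, so splitting the definition of $o(v_L)$ along $R_0=R_L\sqcup(R_0\setminus R_L)$ and using $o_L=0$ gives
\[
o(v_L)=o_L+\bigl(|S_a|-|S_b|\bigr)-\dim V^L=|S_a|-|S_b|-\dim V^L .
\]
Since Theorem~\ref{thm:1} already yields $o(v_L)\le 0$, it suffices to exclude equality; so assume from now on that $o(v_L)=0$, i.e.\ $|S_a|-|S_b|=\dim V^L=:d\ge 1$. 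Recall from the discussion preceding Theorem~\ref{thm:3} that $o_L=o(R_L,V_L,k|_{R_L};v_L)=0$ means $v_L$ is an $R_L$-residual point of $V_L$ and that $R_L$ spans $V_L^*$.

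\emph{Step 2 (packaging Theorem~\ref{thm:1}, and a spanning consequence).} For a linear subspace $W\subseteq V^*$ let $W^{\circ}\subseteq V$ be its annihilator; then $M_W:=v_L+W^{\circ}$ is an affine subspace, and since a root is constant on $M_W$ exactly when it lies in $(W^{\circ})^{\circ}=W$ one computes $o_{M_W}=|P(v_L)\cap W|-|Z(v_L)\cap W|-\dim W$. Theorem~\ref{thm:1} gives $o_{M_W}\le 0$ for all $W$, with the two extremes $W=V^*$ and $W=V_L^*=(V^L)^{\circ}$ returning $o(v_L)=0$ and $o_L=0$. From the finiteness clause of Theorem~\ref{thm:1} one deduces that $P(v_L)$ spans $V^*$: if $u\ne 0$ lay in every $\ker\alpha$, $\alpha\in P(v_L)$, then along the line $v_L+\R u$ one would have $o\ge o(v_L)=0$ everywhere, hence $o\equiv 0$ there, contradicting finiteness of residual points. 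The same argument carried out inside $V_L$ at the $R_L$-residual point $v_L$ shows $P(v_L)\cap R_L$ spans $V_L^*$; therefore the images of $S_a$ in $V^*/V_L^*\cong(V^L)^*$ span $(V^L)^*$, and so $|S_a|\ge d$.

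\emph{Step 3 (the crux, and where the difficulty lies).} It remains to produce enough zeros to contradict $|S_a|-|S_b|=d$, i.e.\ to prove $|S_b|\ge |S_a|-d+1$. The plan is to pick a complete flag $V_L^*=W_0\subsetneq W_1\subsetneq\dots\subsetneq W_d=V^*$ in which each $W_i$ is obtained from $W_{i-1}$ by adjoining roots of $S_a$ (possible since the images of $S_a$ span $V^*/V_L^*$), and to sum the inequalities $o_{M_{W_i}}\le 0$: the telescoped sum of the increments $o_{M_{W_i}}-o_{M_{W_{i-1}}}$ equals $o(v_L)-o_L=0$, the first increment is $\le 0$ because $o_L$ is the maximal value, and each step at which two $\R$-independent roots of $S_a$ enter (unavoidable once $|S_a|>d$, by a rank count) is forced by $o_{M_{W_i}}\le 0$ to absorb a root of $Z(v_L)$ outside $R_L$, i.e.\ an element of $S_b$. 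The borderline case $|S_a|=d$ (where $S_b$ would be empty) is instead excluded using Theorem~\ref{thm:2}: the symmetry $-v_L\in W_0v_L$, together with $-v_L\in W(R_L)v_L$ coming from Theorem~\ref{thm:2} applied to the $R_L$-residual point $v_L$, combine through a reflection in one of the spanning roots of $S_a$ to exhibit a root of $R_0\setminus R_L$ vanishing at $v_L$, again contradicting $S_b=\emptyset$. The genuinely delicate point — and the main obstacle — is the bookkeeping in the flag argument, namely controlling the increments at the intermediate steps, where $o_{M_{W_i}}$ need not be maximal; getting this sign control right (perhaps by a better choice of flag, or by replacing the flag by a direct examination of a single well-chosen subspace $W$) is where the real work of the proof lies.
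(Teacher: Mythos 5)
Your Steps 1 and 2 are sound: the decomposition $o(v_L) = o_L + |S_a| - |S_b| - d$ is correct, the family $M_W = v_L + W^{\circ}$ with $o_{M_W} = |P(v_L)\cap W| - |Z(v_L)\cap W| - \dim W$ is a clean way to package Theorem~\ref{thm:1}, and the spanning argument via the finiteness clause is valid. But the argument does not close, and you are right to flag Step 3 as the obstacle — the gap there is real, not merely bookkeeping.

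Concretely, the flag telescoping yields nothing new. Writing $p_i$, $z_i$ for the numbers of roots of $S_a$, $S_b$ entering at step $i$, the increment at step $i$ is $p_i - z_i - 1$, and the constraint $o_{M_{W_i}}\le 0$ reads $\sum_{j\le i}(p_j - z_j)\le i$, with equality at $i=d$ forced by the assumption $o(v_L)=0$. These partial-sum inequalities with terminal equality are mutually consistent (for instance $p_i=1$, $z_i=0$ for every $i$ satisfies all of them), so no choice of flag turns $o_{M_{W_i}}\le 0$ into the strict inequality $|S_b|>|S_a|-d$ you need. The borderline case is likewise unresolved: from $wv_L = w'v_L = -v_L$ with $w\in W_0$, $w'\in W(R_L)$ you get $w^{-1}w'$ in the stabilizer of $v_L$, but it is not shown how this produces a root of $R_0\setminus R_L$ vanishing at $v_L$.

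More fundamentally, your route — deducing Theorem~\ref{thm:3} purely from Theorems~\ref{thm:1} and~\ref{thm:2} — is not the paper's route, and there is reason to doubt it can work at all with only those inputs. The paper derives Theorem~\ref{thm:3} from Theorem~\ref{thm:support}(C), the non-nesting of tempered forms $M^t\not\subset L^t$ for distinct residual cosets, which itself rests on the positivity of the Hecke algebra trace $\tau$: if $v_L$ were residual then $\{\exp(v_L)\}$ would be a zero-dimensional residual coset whose (trivial) tempered form sits inside $L^t = \exp(v_L + iV^L)$, contradicting (C). That is an analytic input, not a combinatorial one. Indeed the paper's introduction notes that even in the equal-parameter case the conceptual proof of Theorem~\ref{thm:3} goes through the Bala-Carter theorem, a genuinely deep structural result, rather than through Theorems~\ref{thm:1} and~\ref{thm:2}. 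The way your flag argument stalls is symptomatic: the inequalities $o_{M_W}\le 0$ appear too weak on their own, and some additional positivity or rigidity input is required.
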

These theorems are not new but can be deduced by combining
results of \cite{HOH0} and the appendix A of \cite{O}.
This yields proofs based on an elaborate classification
of all residual subspaces for all irreducible root systems $R_0$
and parameter functions $k$, and then
a case-by-case verification of the required properties.
In the special case with
``equal parameters'' (i.e. $k$ is a constant function)
there are conceptual proofs available for Theorem \ref{thm:1} and
Theorem \ref{thm:3} (but not for Theorem \ref{thm:2} as far as I am
aware) based on the interpretation of residual cosets in relation
to nilpotent orbits of complex semisimple Lie algebras.
More precisely in this special case Theorem \ref{thm:1} is
a consequence of the Richardson Dense Orbit
Theorem \cite[Theorem 5.2.1]{C} (see Remark
\ref{rem:richardson}) whereas Theorem \ref{thm:3} follows from
the Bala-Carter Theorem \cite[Theorem 5.9.5]{C} (see
Remark \ref{rem:bala}).
In the general unequal parameter case the only proofs
that were known to me were based on the complete classification
of the residual affine subspaces (see \cite{HOH0}, \cite{O}).

The Plancherel formula for affine Hecke algebras
was derived in \cite{O} as an application of an elementary uniqueness result
for certain multivariable residue distributions applied to an integral
representation of the natural trace functional $\tau$ of the Hecke algebra.
The method gives complete information on the central support of the
Plancherel measure
(i.e. the projection of the support to the spectrum of the center)
of the affine Hecke algebra and important partial information on the
formal dimensions of the discrete series in terms of the central
character (a product formula). Yet the situation was unsatisfactory
since the Theorems \ref{thm:1}, \ref{thm:2} and \ref{thm:3}
(lacking general conceptual proofs) were necessary ingredients
in order to apply the residue lemma to the harmonic
analysis (see \cite{O}).

In the present paper we remedy the situation and show that
the residue method of \cite{O} can be made to work without
using the Theorems \ref{thm:1}, \ref{thm:2} and \ref{thm:3}
as input.
We will derive the central support of the Plancherel measure
of the Hecke algebra directly from the residue method
(see Theorem \ref{thm:support}), which is the main result
of this paper. In addition the Theorems
\ref{thm:1}, \ref{thm:2} and \ref{thm:3}
will follow from this as consequences of the positivity and finiteness
of the trace $\tau$ of the affine Hecke algebra
(see Section \ref{sec:root}).
We will thus
achieve conceptual proofs from basic principles for
all these results.
\begin{remark}
These theorems are known to hold for non-integral root systems as
well (see \cite{HOH0}). It seems likely that the methods of the
present paper also apply in this more general context
using degenerate affine Hecke algebras (as in \cite{HOH0}).
\end{remark}
It is a pleasure to thank Joseph Bernstein and
David Kazhdan for their support and help.
Their insightful questions and comments on the ``residue method''
of \cite{O} made me aware of the possibility of results
of this kind.
\section{Preliminaries on affine Hecke algebras}
This section is a quick review of affine Hecke algebras.
We refer the reader to \cite{EO} or \cite{O} for details
and unexplained notations.

Let $W=W_0\ltimes X$ be an extended affine Weyl group with translation
lattice $X$ and let $S\subset W$ be a set of simple affine reflections.
We work with the affine Hecke algebra $\H$ over $\C$ with basis
$(N_w)_{w\in W}$ where $N_s$ (with $s\in S$) satisfies the quadratic relations
\begin{equation}
(N_s-q(s)^{1/2})(N_s+q(s)^{-1/2})=0
\end{equation}
Here $q:S\to\R_+$ is positive real and constant on conjugacy
classes of simple reflections. We can uniquely extend $q$ to a
length multiplicative function $q:W\to\R_+$ such that
$q(\omega)=1$ for all elements $\omega\in W$ of length $0$.
We often write $N_e=1$.
\subsubsection{Commutative subalgebra $\A$}
We define $\A\subset\H$ to be the commutative subalgebra of $\H$
spanned by the Bernstein basis elements $\theta_x$ ($x\in X$). We
define $T=\operatorname{MaxSpec}(\A)$, a complex torus. We denote
by $T_{v}$ the vector group corresponding to $T$, and by $T_u$ the
compact form of $T$. We have the polar decomposition $T=T_vT_u$.
We assume given a $W_0$-invariant inner product on
$\operatorname{Lie}(T_v)$ which is rational on $Y$ (the lattice
dual to $X$). We thus consider $\operatorname{Lie}(T_v)$ as a
Euclidean space in which $W$ acts as an extended affine Weyl
group.
\subsubsection{Center $\Ze$}
The center $\Ze\subset\A$ of $\H$ is given by Bernstein's
description $\Ze=\A^{W_0}$.
\subsubsection{Star algebra}
The anti-linear anti-involution
$*$ of $\H$ given by $N_w^*=N_{w^{-1}}$ turns $\H$ into
a star-algebra.

Observe that $\Ze\subset\A\subset\H$, where $\Ze$ is a
star subalgebra but $\A$ is not (in general).
\subsubsection{positivity of trace}
We use the standard trace $\tau$
on $\H$ given by $\tau(N_w)=\delta_{w,e}$.
\begin{proposition}
The trace $\tau$ is positive definite
with respect to $*$, and $N_w$ is an orthonormal basis for the
inner product $(x,y):=\tau(x^*y)$.
\end{proposition}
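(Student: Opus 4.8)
The plan is to reduce the whole statement to a single identity for the structure constants of $\H$ in the standard basis, namely
\begin{equation}
\tau(N_aN_b)=\delta_{ab,e}\qquad(a,b\in W).
\end{equation}
Granting this, taking $a=v^{-1}$ and $b=w$ and using $N_v^*=N_{v^{-1}}$ gives $(N_v,N_w)=\tau(N_{v^{-1}}N_w)=\delta_{v^{-1}w,e}=\delta_{v,w}$, so $(N_w)_{w\in W}$ is orthonormal; since it is already a basis of $\H$, the Proposition follows, because for $x=\sum_w c_wN_w$ we then get $(x,x)=\tau(x^*x)=\sum_w|c_w|^2$, which is nonnegative and vanishes only for $x=0$.

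It remains to prove the displayed identity, which I would do by induction on $\ell(b)$. If $\ell(b)=0$, then $b$ has length zero, so $N_aN_b=N_{ab}$ with $\ell(ab)=\ell(a)$, and $\tau(N_aN_b)=\tau(N_{ab})=\delta_{ab,e}$ by definition of $\tau$. If $\ell(b)\geq 1$, write $b=sb'$ with $s\in S$ and $\ell(b')=\ell(b)-1$, so that $N_b=N_sN_{b'}$. When $\ell(as)>\ell(a)$ one has $N_aN_s=N_{as}$, whence $\tau(N_aN_b)=\tau(N_{as}N_{b'})=\delta_{asb',e}=\delta_{ab,e}$ by the inductive hypothesis. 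When $\ell(as)<\ell(a)$, the quadratic relation for $N_s$ gives $N_aN_s=N_{as}+(q(s)^{1/2}-q(s)^{-1/2})N_a$, so that
\begin{equation}
\tau(N_aN_b)=\tau(N_{as}N_{b'})+(q(s)^{1/2}-q(s)^{-1/2})\,\tau(N_aN_{b'})=\delta_{asb',e}+(q(s)^{1/2}-q(s)^{-1/2})\,\delta_{ab',e}.
\end{equation}

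The one point that needs an argument is that the correction term vanishes. If $ab'=e$, i.e. $a=(b')^{-1}$, then $\ell(as)=\ell((as)^{-1})=\ell(sb')=\ell(b)=\ell(b')+1=\ell(a)+1$, which contradicts the standing hypothesis $\ell(as)<\ell(a)$; here I use that $b=sb'$ is length-additive and that $\ell(x)=\ell(x^{-1})$. Hence $\delta_{ab',e}=0$ in this branch, and $\tau(N_aN_b)=\delta_{asb',e}=\delta_{ab,e}$ (note $asb'=e\iff a=(b')^{-1}s^{-1}=(sb')^{-1}=b^{-1}\iff ab=e$), which closes the induction. The only bookkeeping is keeping track of lengths in the extended affine Weyl group and invoking at each step the basic facts recalled above: $N_xN_y=N_{xy}$ whenever $\ell(xy)=\ell(x)+\ell(y)$, the behaviour of length-zero elements, and the quadratic relation $(N_s-q(s)^{1/2})(N_s+q(s)^{-1/2})=0$. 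I do not expect a genuine obstacle; the entire content is the identity $\tau(N_aN_b)=\delta_{ab,e}$, and once it is in hand orthonormality and positive definiteness are formal consequences.
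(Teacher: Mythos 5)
The paper states this proposition without proof, treating it as a standard fact about Iwahori--Hecke algebras; so there is no paper argument to compare against. Your argument is the correct and standard one: the reduction to the identity $\tau(N_aN_b)=\delta_{ab,e}$ is exactly the right move, and the induction on $\ell(b)$ goes through as written. The one potentially delicate point---showing the correction term $(q(s)^{1/2}-q(s)^{-1/2})\delta_{ab',e}$ vanishes in the branch $\ell(as)<\ell(a)$---you handle cleanly via $\ell(as)=\ell(sb')=\ell(b')+1=\ell(a)+1$ when $a=(b')^{-1}$, which contradicts the standing hypothesis. Orthonormality of $(N_w)$ and positive definiteness of $\tau$ then follow formally, as you say.
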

\subsubsection{Eisenstein series}
Let $R_0\subset X$ be the finite root system attached to $\H$ as usual,
i.e. the affine root system of the Coxeter subgroup of $W$ generated by
the simple affine reflections is the affine extension of $R_0^\vee$.
We let $\Delta\in\C[T]$ denote the Weyl
denominator
\begin{equation}
\Delta(t)=\prod_{\a\in R_{0,+}}(1-\a(t)^{-1})
\end{equation}
We denote by $w_0$ the longest element of $W_0$.
\begin{proposition}\cite[Proposition 2.24]{EO}
There exists a unique holomorphic function $E:T\to\H^*$ ($\H^*$ is
the linear dual of $\H$) such that
\begin{enumerate}
\item $E_t(1)=\Delta(t)$.
\item For all $t\in T,\,h\in\H,\,x\in X$: $E_t(\theta_xh)=E_t(h\theta_x)=x(t)E_t(h)$
\end{enumerate}
\end{proposition}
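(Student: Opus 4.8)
The goal is to construct a holomorphic function $E:T\to\H^*$ satisfying the two normalization and equivariance properties, and to establish its uniqueness. The plan is to build $E$ explicitly out of the Bernstein presentation of $\H$ and then verify the two properties, rather than attempting an abstract existence argument. Recall that $\H$ admits the Bernstein decomposition $\H=\A\otimes\H_0$ (as a left $\A$-module), where $\H_0$ is the finite Hecke algebra of $W_0$ with basis $(N_w)_{w\in W_0}$; equivalently every $h\in\H$ can be written uniquely as $h=\sum_{w\in W_0}\theta_{x}$-coefficients times $N_w$. The torus $T=\operatorname{MaxSpec}(\A)$ comes with evaluation characters $t:\A\to\C$, and property (ii) forces $E_t$ to factor, on the $\A$-part, through evaluation at $t$. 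So the natural candidate is to set, for $h=\sum_{w\in W_0}a_w N_w$ with $a_w\in\A$,
\beq
E_t(h):=\sum_{w\in W_0} a_w(t)\,c_w(t),
\eeq
where $(c_w(t))_{w\in W_0}$ are functions on $T$ to be determined by the requirement that $E_t$ be a genuine linear functional compatible with \emph{both} sided multiplications by $\theta_x$, not just the left one, together with the normalization $E_t(1)=\Delta(t)$.

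The key step is to pin down the $c_w(t)$. Since $E_t(\theta_x h)=E_t(h\theta_x)=x(t)E_t(h)$, and since $\theta_x N_w$ and $N_w\theta_x$ differ by lower-order Bernstein terms governed by the Bernstein–Lusztig relations, imposing the two-sided equivariance yields a linear recursion for the $c_w(t)$ in terms of $c_e(t)=\Delta(t)$. Concretely, I would proceed by induction on the length of $w\in W_0$: for $w=e$ the value is fixed by (i) applied to $h=1$; for $w=s_i$ a simple reflection one uses the commutation relation $\theta_x N_{s_i}-N_{s_i}\theta_{s_i x}=(\text{rational expression in }\theta)\cdot(\theta_x-\theta_{s_i x})$ (the Bernstein–Lusztig–Demazure formula) to solve for $c_{s_i}(t)$ in terms of $\Delta(t)$ and the structure constants of the relation, and then bootstrap to general $w$ via a reduced expression. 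One must check this is well defined (independent of the reduced expression), which follows from the braid relations in $\H_0$ together with the $W_0$-equivariance already built into $\Delta$ and into the Bernstein relations; holomorphicity in $t$ is then manifest because every ingredient is a rational function of the $x(t)$ with poles only along the root hyperplanes $\a(t)=1$, and one checks these poles cancel against the zeros of $\Delta$.

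For uniqueness: suppose $E,E'$ both satisfy (i) and (ii). Then $D_t:=E_t-E'_t$ vanishes on $1$ and satisfies $D_t(\theta_x h)=D_t(h\theta_x)=x(t)D_t(h)$. The first relation shows that $D_t$, as a linear functional on $\H=\bigoplus_{w\in W_0}\A N_w$, is determined by its values on the finitely many $N_w$, $w\in W_0$, and is $\A$-linear up to the scalar $t$; the second (right) relation then gives, again by descending/ascending induction on $\ell(w)$ and the Bernstein relations, a homogeneous version of the same recursion that pinned down the $c_w$ — so $D_t(N_w)=0$ for all $w$, whence $D\equiv 0$. The main obstacle I expect is the bookkeeping in the induction: verifying that the recursion solving for $c_w(t)$ is consistent across different reduced expressions for $w$, and that the apparent poles along $\a(t)=1$ are genuinely removable. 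This is where one leans on the precise form of the Bernstein–Lusztig relations and on the fact that $\Delta(t)=\prod_{\a\in R_{0,+}}(1-\a(t)^{-1})$ supplies exactly the compensating zeros; since the statement is quoted from \cite[Proposition 2.24]{EO}, in the paper itself this verification is presumably relegated to that reference.
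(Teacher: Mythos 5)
The paper does not prove this Proposition; it is imported verbatim as \cite[Proposition 2.24]{EO}, so there is no in-text argument to compare against. Your sketch is therefore filling a gap the paper deliberately left to the reference, and judged on its own merits it outlines a plausible construction but leaves the genuinely delicate points at the level of intent rather than proof.

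The skeleton is right: with $\H=\bigoplus_{w\in W_0}\A N_w$, left $\theta$-equivariance of $E_t$ forces $E_t\bigl(\sum_w a_w N_w\bigr)=\sum_w a_w(t)\,c_w(t)$, and the right $\theta$-equivariance together with $c_e=\Delta$ then over-determines the $c_w$ via the Bernstein--Lusztig relation. Three things in your write-up, however, are asserted rather than checked, and they are exactly where the content lies. (1) The relation you quote, $\theta_x N_{s}-N_{s}\theta_{s x}=(\cdot)(\theta_x-\theta_{s x})$, needs its precise form $\theta_x N_s - N_s\theta_{sx}=(q_s^{1/2}-q_s^{-1/2})\,\frac{\theta_x-\theta_{sx}}{1-\theta_{-\a}}$ to actually extract $c_s(t)=(q_s^{1/2}-q_s^{-1/2})(1-\a(t)^{-1})^{-1}\Delta(t)$; without the explicit $\frac{1}{1-\theta_{-\a}}$ denominator there is nothing for $\Delta$ to cancel. (2) For general $w$ of length $\ell$, the recursion along a reduced expression accumulates $\ell$ denominator factors $(1-\b(t)^{-1})$ indexed by the inversion set of $w$; one must check these are distinct positive roots, each occurring to first order, so that $\Delta=\prod_{\b\in R_{0,+}}(1-\b(t)^{-1})$ supplies exactly enough compensating zeros. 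This is true but is precisely the bookkeeping you defer. (3) Independence of the chosen reduced expression does not ``follow from the braid relations'' for free: either one verifies it by a word argument, or one observes that for generic $t$ the constraints (i)--(ii) are genuinely rigid (the space of two-sided $\A$-eigenfunctionals with fixed eigenvalue $t$ is one-dimensional on the regular locus) and then extends by holomorphy; note that your uniqueness step quietly relies on the same generic-$t$ reasoning, since the recursion only determines $D_t(N_s)$ when $(sy)(t)\neq y(t)$ for some $y\in X$ and one must then pass to all $t$ by continuity of the holomorphic family. None of this is fatal, but as written the proof is a program rather than an argument, and the passages you label ``presumably relegated to that reference'' are in fact the substance of the Proposition.
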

\subsubsection{Integral formula for trace}\label{sec:trace}
We have the following theorem:
\begin{theorem}\cite[Theorem 3.7]{EO}
Let $p\in T_v$ be far in the negative chamber. For all $h\in\H$:
\begin{equation}
\tau(h)=\int_{t\in pT_u}E_t(h)\Delta(t)^{-1}\eta(t)
\end{equation}
where
\begin{equation}\label{eq:eta}
\eta(t)=q(w_0)^{-1}\frac{\Delta(t)\Delta(t^{-1})}{D(t)D(t^{-1})}dt
\end{equation}
with
\begin{equation}
D(t)=\prod_{\a\in R_{0,+}}(1-q_{\a^\vee}^{-1}\a(t)^{-1})
\end{equation}
\end{theorem}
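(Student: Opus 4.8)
Both sides of the identity are $\C$-linear functionals on $\H$, so it suffices to check it on the basis $\{\theta_xN_v\mid x\in X,\ v\in W_0\}$ of $\H$ (using that $\H$ is free as a left $\A$-module on $(N_v)_{v\in W_0}$ and that $(\theta_x)_{x\in X}$ is a basis of $\A$). For $h=\theta_xN_v$, property (ii) of Proposition 2.24 gives $E_t(\theta_xN_v)=x(t)E_t(N_v)$, while the definition of $\eta$ yields the immediate simplification
\[
E_t(\theta_xN_v)\,\Delta(t)^{-1}\,\eta(t)=q(w_0)^{-1}\,x(t)\,E_t(N_v)\,\frac{\Delta(t^{-1})}{D(t)\,D(t^{-1})}\,dt .
\]
Since $E$ is holomorphic on $T$, this is a meromorphic top-degree form whose polar divisor lies on the union of the shifted root hypertori $\a(t)=q_{\a^\vee}^{\pm1}$, $\a\in R_{0,+}$, so for $p$ far enough in the negative chamber it is regular on a neighbourhood of the cycle $pT_u$ and the integral is defined. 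Thus the theorem reduces to the identities
\[
\tau(\theta_xN_v)=\int_{pT_u}q(w_0)^{-1}\,x(t)\,E_t(N_v)\,\frac{\Delta(t^{-1})}{D(t)\,D(t^{-1})}\,dt\qquad(x\in X,\ v\in W_0),
\]
that is, to the statement that, for each $v$, the functional $a\mapsto\tau(aN_v)$ on $\A$ equals the extraction of Fourier coefficients along $pT_u$ of the fixed form $q(w_0)^{-1}E_t(N_v)\Delta(t^{-1})D(t)^{-1}D(t^{-1})^{-1}\,dt$.

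To carry this out I would first make $E_t(N_v)$ explicit. Starting from the normalization $E_t(1)=\Delta(t)$ in (i) and using the Bernstein--Lusztig cross relations between the $\theta_x$ and the generators $N_s$ together with \emph{both} equalities in (ii) --- the equality $E_t(h\theta_x)=x(t)E_t(h)$ is needed to handle the term $N_s\theta_{s(x)}$ that the cross relation produces --- one obtains, by induction along a reduced word for $v$, a recursion computing $E_t(N_v)$ for every $v\in W_0$ in closed form as an explicit Laurent-polynomial expression in $t$. This turns the form above into a completely explicit rational differential form on $T$.

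The heart of the matter is then the contour integral and its comparison with $\tau$. For $p$ far in the negative chamber one has $|\a(t)|=\a(p)$ uniformly small on $pT_u$ for every $\a\in R_{0,+}$, below the modulus of every pole of the integrand; each rational factor of $x(t)\,E_t(N_v)\,\Delta(t^{-1})\,D(t)^{-1}D(t^{-1})^{-1}$ therefore expands as a geometric series converging uniformly on $pT_u$, and integrating $dt$ over $pT_u$ extracts a single Fourier coefficient term by term, producing an expression polynomial in the parameters $q(s)^{\pm1/2}$. On the other side, $\tau(\theta_xN_v)$ is computed directly by rewriting $\theta_xN_v$ in the standard basis $(N_w)_{w\in W}$ through the quadratic relations and the Bernstein relations and reading off the coefficient of $N_e$; one then verifies that the two expressions agree. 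The conceptual point of the hypothesis ``$p$ far in the negative chamber'' is that it pushes the cycle $pT_u$ inside all the shifted root hypertori, so that, relative to the compact cycle $T_u$, the deformation of $T_u$ to $pT_u$ sweeps across exactly the residues supported on those hypertori; these residues are the ingredients out of which the discrete and intermediate-series parts of $\tau$ are assembled by the residue method of \cite{O}, and the present theorem is the statement that the single integral over $pT_u$ already computes the full canonical trace $\tau$.

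The main obstacle is this last pair of steps taken together: producing a workable closed form for $E_t(N_v)$ and then carrying out the multivariable residue/Fourier-coefficient computation on $pT_u$ and matching it with $\tau(\theta_xN_v)$, including the geometric verification that $pT_u$ lies in the common domain of convergence of all the geometric expansions involved (equivalently, that $pT_u$ avoids every pole of $D(t)^{-1}$ and $D(t^{-1})^{-1}$) and that it isolates exactly the ``identity'' contribution. An alternative organization is to prove directly that $\phi(h):=\int_{pT_u}E_t(h)\Delta(t)^{-1}\eta(t)$ is an $\A$-central trace on $\H$ that agrees with $\tau$ on $\A$ and vanishes on $N_v$ for $v\ne e$; the $\A$-centrality $\phi(\theta_xh)=\phi(h\theta_x)$ is immediate from (ii), but the trace identity $\int_{pT_u}E_t(h_1h_2)\Delta(t)^{-1}\eta(t)=\int_{pT_u}E_t(h_2h_1)\Delta(t)^{-1}\eta(t)$ --- which fails for $E_t$ pointwise, $E_t$ being a matrix coefficient of the principal series rather than a character --- again comes down to a residue cancellation on $pT_u$, so one does not escape the analytic core of the problem.
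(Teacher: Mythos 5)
This theorem is not proved in the present paper: it is imported verbatim from \cite[Theorem 3.7]{EO}, and the author only records it for use in the residue calculus that follows. So there is no in-paper proof to compare against. That said, your sketch does follow the general line of \cite{EO}: reduce to the Bernstein basis $\theta_xN_v$, use the bi-$\A$-module property (ii) of the Eisenstein functional to pull out the factor $x(t)$, compute $E_t(N_v)$ by descending induction on a reduced word via the Bernstein--Lusztig cross relations, and then match Fourier coefficients on the cycle $pT_u$ against the coefficient of $N_e$ in $\theta_xN_v$. Two caveats are worth flagging. First, what you have written is a \emph{plan}, not a proof: the two steps you yourself single out as ``the heart of the matter'' --- the closed form for $E_t(N_v)$ and the residue/Fourier-coefficient matching --- are precisely the nontrivial content of \cite[Theorem 3.7]{EO}, and your proposal does not carry them out. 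Second, there is a small imprecision in the convergence discussion: on $pT_u$ with $p$ deep in the negative chamber, the factor $D(t^{-1})^{-1}=\prod(1-q_{\a^\vee}^{-1}\a(t))^{-1}$ does admit a geometric expansion since $|\a(t)|=\a(p)$ is small, but $D(t)^{-1}=\prod(1-q_{\a^\vee}^{-1}\a(t)^{-1})^{-1}$ does not --- there $|\a(t)^{-1}|$ is large, and one must first rewrite $(1-q_{\a^\vee}^{-1}\a(t)^{-1})^{-1}=-q_{\a^\vee}\a(t)(1-q_{\a^\vee}\a(t))^{-1}$ before expanding; the resulting extra sign and shift is exactly what makes the bookkeeping of which monomial $x(t)$ is extracted delicate. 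Your closing observation that the ``tracial functional'' route does not short-circuit the analytic core is correct and well taken.
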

\begin{remark}
The statement ``$p\in T_v$ far in the negative chamber''
means that $p$ is sufficiently far from all the walls so that
there are no pole hyperplanes of $\eta$ between $p$ and infinity
in the negative chamber.

This implies that we can make the distance of
$p$ to any wall arbitrarily large without crossing poles (thus
without changing the value of the integral).
\end{remark}
\begin{remark}
In the case $W=W(C_n^\vee)^a$, the affine extension of $C_n$, the
affine Hecke algebra actually allows $3$ parameters $q_i$. The
formulae for $\eta$ are slightly more complicated in this $3$
parameter case, but essentially the same. We ignore this case here
to keep notations simple, but we remark that all arguments apply
to this case equally well.
\end{remark}
\begin{cor}\label{cor:tauonA}
On the commutative subalgebra $\A=\C[T]$ the trace integral formula
simplifies to (with $a\in\A$, and $p\in T_v$
far in negative chamber):
\begin{equation}
\tau(a)=\int_{t\in pT_u}a(t)\eta(t)
\end{equation}
\end{cor}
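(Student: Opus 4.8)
The plan is to read this off directly from the integral formula for $\tau$ in \cite[Theorem 3.7]{EO}, once we have computed the Eisenstein functional $E_t$ restricted to the commutative subalgebra $\A$. Recall that $\A=\C[T]$, with the Bernstein element $\theta_x$ ($x\in X$) corresponding under this identification to the regular function $t\mapsto x(t)$ on $T$; for $a\in\A$ we write $a(t)$ for its value at $t\in T$. So the whole corollary amounts to the assertion that $E_t(a)=a(t)\Delta(t)$ for $a\in\A$, after which the stated simplification is immediate.

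To prove that formula, first I would apply property (2) of the Eisenstein proposition (\cite[Proposition 2.24]{EO}) with $h=1=N_e$, obtaining $E_t(\theta_x)=E_t(\theta_x\cdot 1)=x(t)E_t(1)$. Property (1) gives $E_t(1)=\Delta(t)$, so $E_t(\theta_x)=x(t)\Delta(t)$. Since $h\mapsto E_t(h)$ is linear and the $\theta_x$ span $\A$, this extends to $E_t(a)=a(t)\Delta(t)$ for every $a\in\A$, with $a(t)$ the value of $a\in\C[T]$ at $t$.

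Substituting into \cite[Theorem 3.7]{EO}, for $a\in\A\subset\H$ and $p\in T_v$ far in the negative chamber one gets
\begin{equation}
\tau(a)=\int_{t\in pT_u}E_t(a)\Delta(t)^{-1}\eta(t)=\int_{t\in pT_u}a(t)\Delta(t)\Delta(t)^{-1}\eta(t)=\int_{t\in pT_u}a(t)\eta(t),
\end{equation}
which is the claim. The cancellation of $\Delta(t)$ against $\Delta(t)^{-1}$ is a genuine pointwise identity on the cycle $pT_u$, not merely a formal one, because choosing $p$ far in the negative chamber keeps $pT_u$ away from all pole hyperplanes of $\eta$ and in particular from the zero locus of $\Delta(t)\Delta(t^{-1})$, so $\Delta(t)$ is nowhere zero on the cycle. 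There is no essential obstacle here: the only points requiring a word of care are the identification $\theta_x\leftrightarrow(t\mapsto x(t))$ under $\A=\C[T]$ and the legitimacy of the cancellation just noted, both of which are immediate from the setup.
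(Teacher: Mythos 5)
Your proof is correct and is exactly the intended route: the corollary is stated without explicit proof precisely because it is the immediate substitution you carry out. The key identity $E_t(a)=a(t)\Delta(t)$ for $a\in\A$ follows, as you say, by taking $h=1$ in property (2) of the Eisenstein proposition, applying property (1), and extending by linearity; plugging this into the integral formula for $\tau$ cancels the $\Delta(t)^{-1}$ factor. Your remark about $pT_u$ avoiding the zero locus of $\Delta$ is a reasonable sanity check (and true, since the zeros of $\Delta(t)\Delta(t^{-1})$ project to the unshifted root hyperplanes in $T_v$, which a point $p$ far in the negative chamber avoids), though strictly speaking it is not even needed: the equality $E_t(a)\Delta(t)^{-1}\eta(t)=a(t)\eta(t)$ holds as an identity of meromorphic forms, and the original theorem already guarantees that the cycle $pT_u$ avoids the poles of the integrand. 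Nothing to change.
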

\begin{remark}
The trace $\tau$ on $\A$ will be studied below. Important
ingredients that will play a role are the product structure of
$\eta$, the $W_0$-invariance of $\eta$, and the elementary
Proposition below.
\end{remark}
\begin{proposition}\label{prop:wp}
We view $T_v$ as a real vector space, and we consider the
hyperplane arrangement $\P$ in $T_v$ formed by the intersections
of the pole hyperplanes of $\eta$ with $T_v$. For each central
real subarrangement $\P_c\subset\P$ of this hyperplane arrangement
of poles in $T_v$ and for each chamber $C$ of $\P_c$, there exists
a $w\in W_0$ such that $wp\in C$.
\end{proposition}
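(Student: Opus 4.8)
The plan is to reduce the assertion to the elementary fact that every chamber of a subarrangement of the reflection arrangement of $W_0$ contains an entire Weyl chamber, together with the observation that ``$p$ far in the negative chamber'' amounts to a finite list of linear inequalities.

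First I would pin down the shape of $\P$. By the explicit product formula for $\eta$ the poles of $\eta$ on $T$ are contained in the union of the hypersurfaces $\a(t)=q_{\a^\vee}^{\pm1}$ ($\a\in R_{0,+}$); intersecting with $T_v$ and passing to the logarithmic chart $V:=\operatorname{Lie}(T_v)\cong T_v$, one sees that $\P$ is a \emph{finite} family of affine hyperplanes, each a translate of a reflection hyperplane $\ker\a$ ($\a\in R_0$) of $W_0$. Now fix a central subarrangement $\P_c\subset\P$ and choose $c\in\bigcap_{H\in\P_c}H$; for $H\in\P_c$ pick $\a_H\in R_0$ with $H=c+\ker\a_H$. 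Because two distinct parallel hyperplanes have empty intersection, a \emph{central} $\P_c$ contains at most one hyperplane in each parallel class, so the linear hyperplanes $\ker\a_H$ ($H\in\P_c$) are pairwise distinct and form a subarrangement of the $W_0$-reflection arrangement. Translation by $-c$ then identifies the chambers of $\P_c$ with those of $\{\ker\a_H:H\in\P_c\}$: a chamber $C$ of $\P_c$ is precisely $c+\{v\in V:\operatorname{sign}\a_H(v)=\e_H\ \text{for all }H\in\P_c\}$ for a suitable sign pattern $(\e_H)\in\{\pm1\}^{\P_c}$.

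The geometric heart is then the following. The set $C-c$ is open and nonempty, hence (the reflection hyperplanes of $W_0$ having measure zero) it meets some open Weyl chamber $C'$ of $W_0$; but every root $\a_H$ has constant sign on $C'$, and $C-c$ is defined solely by the signs of the $\a_H$, so in fact $C'\subseteq C-c$. Let $C^-$ be the open Weyl chamber of $W_0$ containing $p$ and let $w\in W_0$ be the unique element with $wC^-=C'$. Since $w^{-1}c$ is a fixed vector and $C^-$ is an open polyhedral cone, $p-w^{-1}c\in C^-$ once $p$ lies far enough inside $C^-$; and as $\P$ is finite there are only finitely many pairs $(\P_c,C)$, hence only finitely many vectors $w^{-1}c$ to beat, so a single choice of ``$p$ far in the negative chamber'' works for all of them simultaneously. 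Then $wp-c=w(p-w^{-1}c)\in wC^-=C'\subseteq C-c$, i.e.\ $wp\in C$, which is what we had to show.

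I expect the only delicate point to be the bookkeeping in the second paragraph — identifying $\P$ as an arrangement of shifted root hyperplanes and, above all, noting that centrality of $\P_c$ forces at most one hyperplane per parallel class, so that $\{\ker\a_H\}$ is honestly a subarrangement of the $W_0$-reflection arrangement. Given that, ``a chamber of a coarser arrangement contains a chamber of the finer one'' together with the finiteness of $\P$ finishes the argument.
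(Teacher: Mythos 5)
Your proposal is correct and follows essentially the same route as the paper: identify the central subarrangement as shifted root hyperplanes through a common point $c$, observe that (after translating by $-c$) each chamber contains a full open Weyl chamber, and use that $p$ is far enough in the negative chamber that the translation by $c$ does not matter, plus transitivity of $W_0$ on Weyl chambers. Your version merely spells out the details (one hyperplane per parallel class, uniformity of ``far enough'' over the finitely many pairs $(\P_c,C)$) that the paper compresses into ``this is obvious.''
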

\begin{proof}
This is obvious: $\P_c$ consists of a collection of shifted root
hyperplanes of the form $\a(t)=q_{\a^\vee}$ (with $\a\in R_0$) in
$T_v$ which have a point in common. Hence $C$ is a shift of a
union $C^\prime$ of Weyl chambers of $W_0$. Since $p$ is far in
the negative chamber we can choose its distance to any wall much
larger than the shifts. In this situation clearly $wp\in C$ iff
$wp\in C^\prime$. But $W_0$ acts transitively on its Weyl
chambers, so that there indeed exists a $w\in W_0$ such that
$wp\in C^\prime$.
\end{proof}
\section{Residue calculus}
Let us now more generally look at a linear functional
$F_{\omega,p}$ on $\A=\C[T]$ defined by
\begin{equation}
F_{\omega,p}(a)=\int_{pT_u}a(t)\omega(t)
\end{equation}
where $\omega(t)=P(t)/Q(t)dt$ with
$P(t),Q(t)$ products of the form
$P(t)=\prod_{m^\prime\in\M^\prime}(1-d_{m^\prime}^{-1}x_{m^\prime}(t))$
and $Q(t)=\prod_{m\in\M}(1-d_{m}^{-1}x_{m}(t))$. Here
$\M,\M^\prime$ are finite index sets on which we have defined functions
$\M\ni m\to (d_m,x_m)\in\C^\times\times X$  and $\M^\prime\ni
m^\prime\to (d_{m^\prime},x_{m^\prime}) \in\C^\times\times X$.

Here $p\in T_v$ is such that $pT_u$ does not meet any of the pole
hyperplanes of $\omega$. In other words, if we project each of the
pole hyperplanes of $\omega$ to $T_v$ along $T_u$ then $p$ is a
regular element with respect to this affine hyperplane arrangement
in $T_v$.

Let us define for $m\in\M$ the complex
codimension $1$ submanifold $L_m\subset T$ by
\begin{equation}
L_m=\{t\in T\mid x_m(t)=d_m\}
\end{equation}
and similarly we define $L_{m^\prime}$ for $m^\prime\in\M^\prime$.
We write
\begin{equation}
T^m=\{t\in T\mid x_m(t)=1\}
\end{equation}
then $L_m$ is a coset of the subgroup $T^m\subset T$. Similar
notations are used for $\M^\prime$.

If $L\subset T$ define is any subset then we define
\begin{equation}
\M_L:=\{m\in\M\mid L\subset L_m\}
\end{equation}
and similarly we define $\M^\prime_L$.
Using this we define the order
$i_L=i_{\omega,L}$ of $\omega$ along $L$
by
\begin{equation}
i_L:=|\M_L|-|\M_L^\prime|
\end{equation}
\begin{dfn}
An $\omega$-residual coset $L$ of $T$ is a connected
component of the intersection of a subcollection
of hypersurfaces $L_m$ for $m$ running in a subset of $\M$
(observe that such a component is a coset of a subtorus
of $T$) which in addition satisfies the property
\begin{equation}
o_L:=i_L-\operatorname{codim}(L)\geq 0
\end{equation}
The collection of $\omega$-residual cosets is denoted by
$\L=\L_\omega$.
\end{dfn}
\begin{dfn}\label{dfn:rescos}
Let $L\in\L$. We define its tempered form
$L^t$ as follows. We can write $L=bT^L$ for some
base point $b$ and subtorus $T^L\subset T$. Let $T_L\subset T$
denote the subtorus whose Lie algebra is orthogonal
to $\operatorname{Lie}(T^L)$ (here we use the Euclidean structure
on $\operatorname{Lie}(T_v)$, and the fact that $X$ is rational
with respect to this structure).
Then $L^t:=(L\cap T_L)T_u^L$, which is a compact real form of $L$.
The set $(L\cap T_L)$ is finite; we fix an element $r_L\in L\cap T_L$.
The projection of $L^t$ onto $T_v$ along $T_u$ is a point $c_L\in T_v$,
the \emph{center} of $L^t$ ($c_L$ is the vector part of $r_L$).
\end{dfn}
\begin{remark}
By definition $\L$ is finite. Let $\Cc$ be the finite set
the centers of the elements $L\in\L$.
\end{remark}
With these definitions we can now formulate the basic residue lemma:
\begin{lemma}\label{lem:dist}
There exists a unique collection
$\{\Xf_c\in C^{-\infty}(cT_u)\}_{c\in\Cc}$
of distributions such that
\begin{enumerate}
\item The support of $\Xf_c$is contained in
\begin{equation}
S_c=\cup_{L\in\L: c_L=c}L^t\subset cT_u
\end{equation}
\item For all $a\in\A$:
\begin{equation}
F_{\omega,p}(a)=\sum_{c\in\Cc}\Xf_c(a|_{cT_u})
\end{equation}
\end{enumerate}
\end{lemma}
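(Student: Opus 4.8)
The plan is to prove existence and uniqueness by deforming the contour $pT_u$ toward the walls and keeping careful track of the residues that are picked up. Start with uniqueness, which is the conceptually cleaner half. Suppose $\{\Xf_c\}$ and $\{\Xf_c'\}$ both satisfy (i) and (ii); then $\sum_c(\Xf_c-\Xf_c')(a|_{cT_u})=0$ for all $a\in\A=\C[T]$. Because the centers $c\in\Cc$ are distinct points of $T_v$, the compact tori $cT_u$ are disjoint, so a single polynomial $a$ restricts independently to each $cT_u$; evaluating against suitable $a$ separates the sum and forces each $\Xf_c=\Xf_c'$ once one knows that a distribution on $cT_u$ supported on $S_c$ and annihilating all restrictions $a|_{cT_u}$ must vanish. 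This last point follows from the density of $\C[T]|_{cT_u}$ in $C^\infty(cT_u)$ (it contains all characters of $T_u$, hence a subalgebra separating points and closed under conjugation, so Stone–Weierstrass applies), together with the fact that a compactly supported distribution is determined by its pairing with a dense subspace of test functions.

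For existence, the idea is the standard multidimensional residue shift. Write $F_{\omega,p}(a)=\int_{pT_u}a(t)\omega(t)$ and move $p$ in $T_v$ from its far-out position toward the origin, say along the chamber structure of the hyperplane arrangement $\P$ of Proposition~\ref{prop:wp}. Each time the contour crosses a pole hyperplane (the projection to $T_v$ of some $L_m$), Cauchy's theorem in the transverse direction replaces part of the integral by a lower-dimensional integral over a contour around that $L_m$, weighted by the residue of $\omega$; iterating, after finitely many crossings one expresses $F_{\omega,p}(a)$ as a sum over flags of hyperplanes of iterated residue integrals over compact cycles. One then shows that only those iterated residues supported on $\omega$-residual cosets $L$ survive with a nonzero contribution — the defining inequality $o_L=i_L-\operatorname{codim}(L)\ge 0$ is exactly the condition that the order of the pole of $\omega$ along $L$ is large enough relative to the codimension for the iterated residue not to vanish identically; if $o_L<0$ the corresponding iterated residue is zero. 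Grouping the surviving terms by the center $c_L=c$ of the tempered form $L^t$, and noting that the compact cycle for the residue along $L$ can be taken inside a tubular neighbourhood of $L^t$ (and hence of $S_c$), one obtains distributions $\Xf_c$ on $cT_u$ supported in $S_c$ with the required property (ii). The invariance of the construction under the choice of path — which is needed for $\Xf_c$ to be well defined — follows because any two admissible paths differ by crossings that cancel in homology, the residue being a closed-form operation.

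The main obstacle is the bookkeeping in the existence proof: making precise that the iterated-residue contributions along cosets with $o_L<0$ genuinely cancel or vanish, and that the surviving terms can be organized as honest distributions on the fixed compact manifolds $cT_u$ rather than merely formal sums of integrals over shifting cycles. Concretely one must (a) choose, once and for all, an ordering/flag structure on the hyperplanes of $\P$ so that the iterated residue is unambiguous, (b) verify the path-independence so the outcome does not depend on that choice, and (c) confirm the support statement by a tubular-neighbourhood argument showing the residue cycle around $L$ is homotopic, within the pole locus complement, to one lying over $L^t\subset S_c$. Steps (a)–(c) are where the geometry of the shifted root-hyperplane arrangement enters; the estimates controlling which residues vanish are essentially the linear-algebra content of the inequality defining $\L$. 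Once these are in place, the decomposition $F_{\omega,p}=\sum_{c}\Xf_c(\,\cdot\,|_{cT_u})$ is immediate, and uniqueness as sketched above pins the $\Xf_c$ down.
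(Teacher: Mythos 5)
Your existence sketch is broadly on the right track and matches the approach of the proof that the paper delegates to \cite[Proposition~3.7]{O} (contour deformation of $pT_u$, iterated residues organised by flags of pole hyperplanes, and a degree count showing that a coset $L$ contributes only when $o_L\geq 0$). The items you flag as ``the main obstacle'' are indeed where essentially all of the work lies, so that half is a plan rather than a proof, but it is the right plan.

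The uniqueness argument, however, contains a genuine error. You assert that because the compact tori $cT_u$ are pairwise disjoint, ``a single polynomial $a$ restricts independently to each $cT_u$,'' so that one can evaluate against suitable $a$ to decouple the sum. This is false. Each $cT_u$ is a compact real form of $T$ and hence Zariski dense, so the restriction map $\A=\C[T]\to C^\infty(cT_u)$ is \emph{injective}: $a|_{cT_u}$ determines $a$ and therefore determines $a|_{c'T_u}$ for every other $c'$. The restrictions are perfectly correlated, not independent, and one cannot choose $a$ to peak on one $cT_u$ while being negligible on the others. Consequently the identity $\sum_c(\Xf_c-\Xf_c')(a|_{cT_u})=0$ is a single constraint on $a\in\A$ and does not obviously split into one constraint per $c$; the decoupling step is exactly what must be proved, and your argument assumes it. (A secondary point: Stone--Weierstrass gives density only in the uniform topology, which is not enough to determine a distribution of positive order; one needs density in $C^\infty(cT_u)$, which for a torus comes from Fourier-series truncation, not Stone--Weierstrass.) The uniqueness proof in the source \cite{O} is more delicate: it exploits the $X$-translation action $a\mapsto\theta_{nx}a$ and the fact that a finite-order distribution supported on $cT_u$, applied to $\theta_{nx}a|_{cT_u}$, grows like $|c^{nx}|$ up to polynomial factors in $n$; choosing $x$ so that one center dominates in modulus lets one peel off the terms one at a time. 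Your proposal needs an argument of this kind (or another mechanism that genuinely separates the centers) to close the gap.
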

\begin{remark}
Of course the collection $\{\Xf_c\}$ will depend on the chamber
$C$ which contains $p$ of the real affine hyperplane arrangement
$\P$ in $T_v$ formed
by the real projections $L_mT_u\cap T_v$ of the pole hyperplanes.
If we want to stress this we will write $\Xf_{p,c}$.
\end{remark}
\subsubsection{Local cycles}
We can be more precise about the nature of the distributions
$\Xf_c$ ($c\in\Cc$) by analyzing the existence proof of Lemma
\ref{lem:dist}. The result is a description of $\Xf_c$ as a
summation over the $L\in\L$ with $c_L=c$ and (for each $L$) a sum
of normal derivatives of boundary values on $L^t$ of certain
rational functions on $L$ from certain wedges with edge $L^t$.
This was done in \cite[Proposition 3.7]{O}. Let us describe the
result (as we will need it further on):

Let $L\in\L$. We define
\begin{equation}
\M^L=\{m\in \M-\M_L\mid L\cap L_m\not=\emptyset\}
\end{equation}
and
\begin{equation}
\M^{L,t}=\{m\in \M-\M_L\mid L^t\cap L_m\not=\emptyset\}\subset\M^L
\end{equation}
The set $\M^L$ describes the intersection of poles of
$\omega$ with $L$, and $M^{L,t}$ the poles of $\omega$ which
meet $L^t$.

For $\d>0$ and each $L$ which is a connected
component of an intersection of codimension $1$ cosets
$L_m\subset T$ with $m\in \M$, we denote by $\Bc_L(r_L,\d)$
a ball in $T_L$
with radius $\d$ and center $r_L$, and by
$\Bc^L_{v}(\d)$
a ball
with radius $\d$ and center $e$ in $T^L_{v}$.

Let $U^L(\d)\subset T^L$ be the open set $\{t\in T^L\mid\forall
m\in\M^L: t\overline{\Bc_L(r_L,\d)}\cap L_m=\emptyset\}$. Note
that $U^L(\d_1)\subset U^L(\d_2)$ if $\d_1>\d_2$, and that the
union of these open sets is equal to the complement of union of
the codimension $1$ subsets $r_L^{-1}(L\cap L_m)\subset T^L$ with
$m\in \M^L$.
\begin{proposition}\label{prop:cycle} Let $\e>0$ be such that for all
$m\in\M$ and $L\in\L$,
$L_m\cap\Bc_L(r_L,\e)\Bc^L_{v}(\e)T_u^L\not=\emptyset$ implies
that $L^{t}\cap L_m\not=\emptyset$.
There exist
\begin{enumerate}
\item[(i)] $\forall L\in\L$, a point
$\e^L\in\Bc^L_{v}(\e)\backslash\cup_{m\in\M^{L,t}}T^m$,
\item[(ii)] a $0<\d<\e$ such that $\forall L\in\L^\om,\ \e^LT^L_u
\subset U^L(\d)$, and
\item[(iii)] $\forall L\in\L$, a compact cycle
$\xi_L\subset\Bc_L(r_L,\d)\backslash\cup_{m\in\M_L}L_m$
of dimension
$\operatorname{dim}_{\mathbb{C}}(T_L)$,
\end{enumerate}
such that $\forall c\in\Cc^\om, \forall\phi\in
C^\infty(cT_u)$:
\begin{equation}
\Xf_c(\phi)=\sum_{\{L\mid c_L=c\}}\Xf_L(\phi),
\end{equation}
where
$\Xf_L$
is the distribution on $cT_u$ with support $L^t$
defined by $\forall a\in\A$:
\begin{equation}
\Xf_L(a)=\int_{\e^LT_u^L\times\xi_L}a\om.
\end{equation}
If $\M^{L,t}=\emptyset$ we may take $\e^L=e$.
\end{proposition}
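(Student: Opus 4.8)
The plan is to read off the description of each $\Xf_c$ from a constructive proof of Lemma~\ref{lem:dist}, namely from the iterated residue computation of \cite[Proposition~3.7]{O}. Using the polar decomposition $T=T_vT_u$, I regard $F_{\om,p}(a)=\int_{pT_u}a\,\om$ as the integral of the meromorphic top-form $a\,\om$ over the real $n$-cycle $pT_u$, where $n=\operatorname{dim}_{\mathbb{C}}(T)$ and the poles of $a\,\om$ lie along the cosets $L_m$. First I would move the vector part $p$ towards $e$ along a generic path in $T_v$: the value of the integral is unchanged while the path stays off the real arrangement $\P$, and each crossing of a wall of $\P$ (the real projection of a subcollection of the $L_m$) changes the integral, by the one-variable Cauchy theorem applied in the complex direction transversal to that wall, by the integral over a lower-dimensional cycle in the relevant coset $L_m$ of the transversal residue $\operatorname{Res}_{L_m}(a\,\om)$. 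That residue is again a meromorphic top-form, now on $L_m$, with poles along the intersections $L_m\cap L_{m'}$ for $m'\neq m$; deforming the new base point within $L_m$ and iterating, one descends along chains of cosets $T\supsetneq L^{(1)}\supsetneq\cdots\supsetneq L^{(r)}=L$ of successive codimension one.

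Next I would carry this to completion and re-express the accumulated residues. A chain down to a coset $L$ of codimension $r$ contributes the integral over a cycle in $L$ of the $r$-fold iterated residue of $a\,\om$ along $L$; by Cauchy's integral formula this iterated residue is itself an integral of $a\,\om$ over a product of $r$ small circles transversal to $L$ --- collectively a middle-dimensional cycle $\xi_L\subset\Bc_L(r_L,\d)\setminus\bigcup_{m\in\M_L}L_m$ of dimension $\operatorname{dim}_{\mathbb{C}}(T_L)$ --- while the remaining cycle in $L$ is a small translate $\e^LT_u^L$ of the compact directions of $L$, so the trapped local contribution is $\int_{\e^LT_u^L\times\xi_L}a\,\om$. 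The translate point $\e^L$ has to be chosen generic in $\Bc_v^L(\e)$, i.e.\ off the subgroups $T^m$ with $m\in\M^{L,t}$, precisely because $\om$ still carries the poles indexed by $\M^L$, which meet $L$ without containing it; and the hypothesis on $\e$ (together with the choice $\d<\e$ of (ii)) is exactly what guarantees that no pole $L_m$ disjoint from $L^t$ meets the box $\Bc_L(r_L,\e)\Bc_v^L(\e)T_u^L$, so that $\xi_L$ can be placed inside a ball of radius $\d$. Grouping the contributions by the center $c_L=c$ of the terminal coset yields $\Xf_c=\sum_{\{L\mid c_L=c\}}\Xf_L$ with $\Xf_L(a)=\int_{\e^LT_u^L\times\xi_L}a\,\om$, the ``remainder'' of the deformation near $e$ being the term $\Xf_T$ attached to the residual coset $L=T$ (for which $o_T=0$ and $c_T=e$). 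Only residual cosets contribute: in transversal coordinates centred at $r_L$ the form $a\,\om$ expands into homogeneous rational forms with scaling weights bounded below by $\operatorname{codim}(L)-i_L$, and only the weight-zero part has nonzero period over a cycle in the complement of the central arrangement $\{L_m\}_{m\in\M_L}$; hence $\Xf_L=0$ whenever $o_L<0$, which is why the index sets in Lemma~\ref{lem:dist} and in the definition of $\L$ are cut out by $o_L\ge0$.

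It then remains to pin down the supports and normalisations. For $a\in\A=\C[T]$, Fubini over the product cycle $\e^LT_u^L\times\xi_L$ writes $\Xf_L(a)$ as the integral over $\e^LT_u^L$ of $s\mapsto\int_{\xi_L}a\,\om$, and the inner integral depends only on a finite jet of $a$ along $L$; since $\Xf_L$ extends continuously to $C^\infty(cT_u)$ by Lemma~\ref{lem:dist}, this forces $\operatorname{supp}(\Xf_L)\subseteq L\cap cT_u=L^t$, with equality by the genericity of the choices, and with $c=c_L$ the center of $L^t$ in the sense of Definition~\ref{dfn:rescos}; the uniqueness of the collection $\{\Xf_c\}$ is already part of Lemma~\ref{lem:dist}. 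The main obstacle is the bookkeeping in the middle step: one must fix a single radius $\d$, all the edge points $\e^L$, and all the transversal cycles $\xi_L$ \emph{simultaneously and coherently} over the finite set $\L$, so that the successive contour deformations assemble into one family and the accumulated pieces recombine precisely into the product cycles $\e^LT_u^L\times\xi_L$ with no leftover boundary terms; the degenerate case $\M^{L,t}=\emptyset$, in which the compact factor meets no pole and one may take $\e^L=e$, is handled along the way. The rest is a routine, if lengthy, transcription of the residue calculus of \cite[Proposition~3.7]{O}.
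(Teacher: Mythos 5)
The paper offers no proof of this proposition at all: it is quoted from \cite[Proposition 3.7]{O}, and your sketch reproduces precisely the argument used there --- deform the base point $p$ to $e$ along a generic path in $T_v$, pick up a transversal one-variable residue at each wall crossing, iterate down chains of cosets of successive codimension one, and reassemble the accumulated local contributions as integrals over the product cycles $\e^LT_u^L\times\xi_L$, with only the cosets satisfying $o_L\ge 0$ surviving by the homogeneity count. Since you explicitly defer the genuinely delicate part (the simultaneous, coherent choice of $\d$, the points $\e^L$ and the cycles $\xi_L$ over all of $\L$) to the ``routine, if lengthy, transcription'' of \cite{O}, your treatment rests on the same source and at the same level of detail as the paper's own.
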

\begin{remark}\label{rem:split}
We remark that the splitting of $\Xf_c$ as a sum
of $\Xf_L$ as given here has in general no intrinsic
meaning (it is not unique, and the cycles
$\xi_L$ are not uniquely determined). Of course the
ambiguity of $\Xf_L$ is restricted to distributions with
support in the intersection of $L^t$ with other
tempered residual cosets $M^t$.
\end{remark}
We remark that (i), (ii) and (iii) imply
that the functional $\Xf_L$ on $\A$ indeed defines a
distribution on $c_LT_u$, supported on $L^t$.
We look at it more closely in order to arrive at a
useful conclusion.

The cycle of integration of $\Xf_L$ is the product
$\e^LT_u^L\times\xi_L\subset T^L\times T_L$.
Given $L\in\L$ and $t=t^Lt_L\in T$ we write
\begin{equation}
\omega(t)=(P^L(t^Lt_L)/Q^L(t^Lt_L))(P_L(t_L)/Q_L(t_L))dt_L\wedge dt^L
\end{equation}
with $P_L$ and $Q_L$ the product over the factors
parametrized by $\M_L^\prime$ and $\M_L$ respectively,
and where $P^L$ and $Q^L$ are
the products of the remaining factors. Here we
have chosen the orientations for $T^L_u$ and $T_{L,u}$ such that
if $dt^L$ and $dt_L$ denote the holomorphic continuations
of the normalized volume forms of $T^L_u$ and $T_{L,u}$ then
$dt=dt_L\wedge dt^L$.

Hence we may write
\begin{equation}
\Xf_L(a)=\int_{\e^LT^L_u}I(a,t^L)dt^L
\end{equation}
where $I(a,t^L)$ denotes the inner integral
\begin{equation}\label{eq:inner}
I(a,t^L):=\int_{t^L\xi}a(t^Lt_L)P^L(t^Lt_L)/Q^L(t^Lt_L))(P_L(t_L)/Q_L(t_L))dt_L
\end{equation}
Hence
\begin{equation}
I(a,t^L)=D(aP^L/Q^L)|_{r_Lt^L}
\end{equation}
where $D\in\operatorname{Sym}(\operatorname{Lie}(T_L))$ is
a constant coefficient differential operator on $T_L$,
extended to $T$ by invariance, homogeneous of degree
\begin{equation}
o_L:=i_L-\operatorname{codim}(L)
\end{equation}
Hence $I(a,t)$
is a linear combination of (possibly higher order)
partial derivatives $D_\ka a$ of $a$ at $r_Lt$ in the direction of
$T_L$, with coefficients in the ring of
rational functions on $T^L$ which are regular outside the
codimension $1$ intersections $r_L^{-1}(L\cap L_m)$
\begin{equation}
I(a,t)=\sum_{\ka}f_\ka D_{\ka} a.
\end{equation}
Indeed, by the Leibniz rule
the coefficient $f_\kappa$ is the restriction to $L$
of a sum of normal derivatives $D^\nu_\kappa(P^L/Q^L)$ of degree
\begin{equation}
\operatorname{deg}(D^\nu_\kappa)=o_L-
\operatorname{deg}(\operatorname{D_\kappa})
\end{equation}
Hence $\Xf_L(a)$ is equal to the sum of the boundary value
distributions $\operatorname{BV}_{\e^L,f_\ka}$ of the meromorphic
coefficient functions, applied to the corresponding partial
derivative $D_\ka a$ of $a$, restricted to $L^{t}$:
\begin{equation}
\Xf_L(a)=\sum_\ka \operatorname{BV}_{\e^L,f_\ka}(D_\ka
a|_{L^{t}}).
\end{equation}
We see that $\Xf_L$ is a distribution supported in
$L^{t}\subset c_LT_u$, which only depends on $\xi_L$ and on the
component of $\Bc^L_{v}(\e)\backslash\cup_{m\in\M^{L,temp}}T^m$
in which $\e^L$ lies.

The following corollary is the main result of this subsection:
\begin{cor}\label{cor:form} Let $L\in\L$.
Suppose that $\Xf_L\not=0$ (recall that this depends on $p$,
and in general on the choices of the $\e^L$).
Recall that $T$ is finite quotient
of $T^L\times T_L$ by $K_L=T_L\cap T^L$, a finite abelian group.
There exists an element $b\in\C[T_L]^{K_L}\subset \C[T]$ such that
\begin{enumerate}
\item The vanishing order of $b$ at $r_L$ is $o_L$.
\item For all $a\in \C[T]$ we have $\Xf_L(ab)=BV_{\e^L,\omega^L}(a|_{L^t})$,
where we mean by
$BV_{\e^L,\omega^L}$ the boundary value distribution on $L^t$
defined by
\begin{equation}
BV_{\e^L,\omega^L}(f)=\lim_{\e^L\to 1}\int_{t\in\e^L T^L_u}f(r_Lt)\omega^L(r_Lt)
\end{equation}
\end{enumerate}
where for $t\in \e^LT^L_u$ we put
\begin{equation}\label{eq:omL}
\om^L(r_Lt):=P^L(r_Lt)/Q^L(r_Lt)d^Lt
\end{equation}
\end{cor}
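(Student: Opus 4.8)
The plan is to run the computation preceding the statement backwards. All the hard analysis is already done: for $a\in\C[T]$ one has $\Xf_L(a)=\int_{\e^LT^L_u}\bigl(D(aP^L/Q^L)|_{r_Lt^L}\bigr)\,dt^L$, where $D\in\operatorname{Sym}(\operatorname{Lie}(T_L))$ is a constant coefficient differential operator in the $T_L$-directions, homogeneous of degree $o_L$, and nonzero since $\Xf_L\neq0$. I want to exhibit $b\in\C[T_L]^{K_L}\subset\C[T]$ which vanishes at $r_L$ to order exactly $o_L$ and whose degree-$o_L$ leading term $b_{o_L}$ at $r_L$ (a homogeneous polynomial on $\operatorname{Lie}(T_L)$) satisfies $D(b_{o_L})=1$. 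Granting such a $b$, substituting $ab$ for $a$ and using that $b=b(t_L)$ depends only on $t_L$, lies in $\m_{r_L}^{o_L}$, and that $D$ is homogeneous of degree $o_L$, the Leibniz rule collapses $D(ab\,P^L/Q^L)|_{r_Lt^L}$ to $D(b_{o_L})\cdot(aP^L/Q^L)|_{r_Lt^L}=a(r_Lt^L)\,(P^L/Q^L)(r_Lt^L)$. Hence $\Xf_L(ab)=\int_{\e^LT^L_u}a(r_Lt^L)\,\om^L(r_Lt^L)$; since deforming $\e^L$ inside its component of $\Bc^L_v(\e)\backslash\cup_{m\in\M^{L,t}}T^m$ crosses no pole of $\om^L$, this integral is independent of such $\e^L$, hence equals its limit as $\e^L\to1$, which is $BV_{\e^L,\om^L}(a|_{L^t})$. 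That is assertion (2), and assertion (1) is built into the construction of $b$.

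For the Leibniz collapse, write $D=\sum_{|\kappa|=o_L}c_\kappa\partial^\kappa$ in linear coordinates on $\operatorname{Lie}(T_L)$ centred at $r_L$ and expand each $\partial^\kappa(b\,g)(r_L)$: every term carries a factor $\partial^\mu b(r_L)$ with $\mu\leq\kappa$, which vanishes unless $|\mu|\geq o_L=|\kappa|$, that is, unless $\mu=\kappa$; so only the term with all derivatives on $b$ survives and $D(b\,g)(r_L)=\bigl(\sum_\kappa c_\kappa\partial^\kappa b(r_L)\bigr)g(r_L)=D(b_{o_L})\,g(r_L)$. This holds for $g$ holomorphic in the $T_L$-directions near $r_L$, and then extends to $g=aP^L/Q^L$ as an identity of meromorphic functions of $t^L$.

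To construct $b$ I would proceed in two steps. First the leading term: since $D$ is a nonzero homogeneous constant coefficient operator of degree $o_L\geq0$ (the inequality because $L\in\L$), the linear map $p\mapsto Dp$ from degree-$o_L$ homogeneous polynomials on $\operatorname{Lie}(T_L)$ to $\C$ is nonzero, hence surjective, so some $b_{o_L}$ with $D(b_{o_L})=1$ exists. Second, realise $b_{o_L}$ as the leading term of a $K_L$-invariant regular function: the key remark is that $K_L=T_L\cap T^L$ acts on $T_L$ by translations, hence freely, so $T_L\to T_L/K_L$ is an \'etale covering of tori; it identifies $\C[T_L]^{K_L}$ with $\C[T_L/K_L]$ and the completed local ring, cotangent space and operator $D$ at $r_L$ with their counterparts at the image $\overline{r_L}$, and $T_L/K_L$ is again a smooth affine variety. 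Therefore $\C[T_L/K_L]$ surjects onto $\mathcal{O}_{T_L/K_L,\overline{r_L}}/\m_{\overline{r_L}}^{o_L+1}$, and choosing a preimage of the class represented by $b_{o_L}\in\m_{\overline{r_L}}^{o_L}/\m_{\overline{r_L}}^{o_L+1}$ yields $b\in\C[T_L]^{K_L}$ lying in $\m_{r_L}^{o_L}$ with leading term $b_{o_L}$; as $b_{o_L}\neq0$ the vanishing order at $r_L$ is exactly $o_L$.

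The main obstacle will be precisely this last step --- the tension between making $b$ globally $K_L$-invariant and prescribing its behaviour at the single point $r_L$. One cannot just average a locally built $b$ over $K_L$, since the $K_L$-translates of $r_L$ inside $T_L$ are distinct and averaging destroys the order-$o_L$ vanishing at $r_L$; passing to the quotient torus $T_L/K_L$ and using the \'etaleness of $T_L\to T_L/K_L$ near $r_L$ is what resolves it. Everything else is bookkeeping on top of the already established residue formula for $\Xf_L$.
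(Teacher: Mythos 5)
Your proposal is correct and follows essentially the same route as the paper, whose proof simply asserts that ``it is then elementary that we can choose $b$'' with the stated properties; you supply exactly the elementary argument being invoked (surjectivity of the nonzero homogeneous operator $D$ on degree-$o_L$ homogeneous polynomials, realization of the leading term inside $\C[T_L]^{K_L}$ via the finite covering $T_L\to T_L/K_L$, and the Leibniz collapse). Nothing further is needed.
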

\begin{proof} We have
\begin{equation}
I(ab,t)=D(abP^L/Q^L)|_{r_Lt}
\end{equation}
where $D\in\operatorname{Sym}(\operatorname{Lie}(T_L))$ is
a constant coefficient differential operator on $T_L$,
extended to $T$ by invariance, nonzero (by assumption)
and homogeneous of order $o_L$. It is then elementary that
we can choose $b$ so that
\begin{equation}
I(ab,t)=a(r_Lt)P^L(r_Lt)/Q^L(r_Lt)
\end{equation}
as required.
\end{proof}
\begin{cor}\label{cor:form2}
Suppose that $F_{\omega,p}$ is a (complex) measure on
$S=\cup_{c\in\Cc}S_c$ (in the sense of Lemma \ref{lem:dist}).
Suppose moreover that $\Xf_L\not=0$
and that $L^t$ is not contained in the tempered form $M^t$ of
a larger residual subspace $M$. Then $o_L=0$.
\end{cor}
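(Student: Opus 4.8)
The plan is to argue by contradiction: assume $L^t$ is a maximal tempered residual coset (not contained in any larger $M^t$) with $\Xf_L\neq 0$, but $o_L>0$. Since $o_L\geq 0$ always holds for residual cosets by definition, it suffices to rule out $o_L>0$. The key leverage is Corollary \ref{cor:form}, which says there is $b\in\C[T_L]^{K_L}$ vanishing to order exactly $o_L$ at $r_L$ such that $\Xf_L(ab)=\mathrm{BV}_{\e^L,\om^L}(a|_{L^t})$ for all $a\in\C[T]$. The idea is to pick a clever test function $a$ so that $ab$ vanishes on all tempered residual cosets $M^t\subset S$ other than $L^t$, while $\mathrm{BV}_{\e^L,\om^L}(a|_{L^t})\neq 0$; combined with the hypothesis that $F_{\omega,p}$ is a measure supported on $S$, this will force a contradiction when $o_L>0$.

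First I would use the maximality hypothesis on $L^t$: since $L^t$ is not contained in any strictly larger $M^t$, for each other tempered residual coset $M^t$ in $S$ the intersection $L^t\cap M^t$ is a proper (lower-dimensional) subset of $L^t$, and in particular $M^t$ is cut out (locally, or after intersecting with a neighborhood) by a regular function that does not vanish identically on $L^t$. So I can build, by multiplying together suitable affine-linear characters $1-x(t)/d$, a function $a_0\in\C[T]$ that vanishes on $\bigcup_{M\neq L}M^t$ but is not identically zero on $L^t$; moreover I can arrange, by also including factors supported away from $r_L$, that $a_0$ is nonvanishing at $r_L$. Then $a_0 b$ vanishes on every $M^t$ ($M\neq L$) and, restricted near $r_L$, still vanishes to order exactly $o_L$ at $r_L$ along $T_L$ since $a_0(r_L)\neq 0$.

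Now I evaluate $F_{\omega,p}(a_0 b)$ two ways. By Lemma \ref{lem:dist}, $F_{\omega,p}(a_0 b)=\sum_{c}\Xf_c((a_0b)|_{cT_u})$, and since $\Xf_c=\sum_{c_M=c}\Xf_M$ with each $\Xf_M$ supported on $M^t$, only the term $\Xf_L$ survives (all other $M^t$ being killed by $a_0 b$), giving $F_{\omega,p}(a_0b)=\Xf_L(a_0b)=\mathrm{BV}_{\e^L,\om^L}(a_0|_{L^t})$ by Corollary \ref{cor:form}. On the other hand, $F_{\omega,p}$ is a measure supported on $S$; here is the crux. If $o_L>0$, then $b$ vanishes to order $o_L\geq 1$ along $T_L$ at $r_L$, i.e. $a_0 b$ vanishes on $L^t$ to first order transversally; but a measure on $S=L^t\cup(\text{lower strata})$, evaluated on a function vanishing on all of $S$, must give $0$ — wait, $a_0b$ does vanish on $L^t$ only if $b$ does, and $b\in\C[T_L]$ vanishes at $r_L$ but need not vanish on all of $L^t=r_L T^L_u$. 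So I should instead choose $b$ more carefully: by part (1) of Corollary \ref{cor:form} $b$ vanishes to order $o_L$ \emph{at the point} $r_L$, and since $b\in\C[T_L]$ it is constant along $T^L$, so $b$ vanishes on all of $L^t$ iff $b(r_L)=0$, which holds precisely when $o_L\geq 1$. Thus if $o_L>0$ then $a_0b\equiv 0$ on $L^t$, hence on all of $S$, so $F_{\omega,p}(a_0b)=0$ since $F_{\omega,p}$ is a measure on $S$; but $F_{\omega,p}(a_0b)=\mathrm{BV}_{\e^L,\om^L}(a_0|_{L^t})$, and $\mathrm{BV}_{\e^L,\om^L}$ is a nonzero distribution on $L^t$ (indeed $\Xf_L\neq 0$ and Corollary \ref{cor:form} identifies it with this boundary value after twisting by $b$), so we can choose $a_0$ with $\mathrm{BV}_{\e^L,\om^L}(a_0|_{L^t})\neq 0$ — a contradiction. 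Therefore $o_L=0$.

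The main obstacle I expect is the construction of the test function $a_0$: one must simultaneously (a) annihilate every other tempered residual coset $M^t\subset S$, (b) keep $a_0$ nonzero at $r_L$ so that the twist by $b$ still has vanishing order exactly $o_L$, and (c) ensure $\mathrm{BV}_{\e^L,\om^L}(a_0|_{L^t})\neq 0$, which requires understanding that $\om^L$ has no pole along $L^t$ itself (so the boundary-value distribution is genuinely a nonzero measure on $L^t$, not something that only sees lower strata). Point (c) uses that $\M^{L}$ (poles meeting $L$) gives strictly lower-dimensional loci in $L$, so $\om^L$ restricted generically to $L^t$ is a nonzero smooth density; then $a_0$ can be taken to be a function close to $1$ on a small open subset of $L^t$ away from those loci and away from the other $M^t$.
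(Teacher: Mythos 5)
Your overall strategy — contradiction via Corollary~\ref{cor:form}, exploiting that $b(r_L)=0$ when $o_L>0$ so $a_0b$ vanishes on $L^t$, and comparing the two evaluations of $F_{\omega,p}(a_0b)$ — is the right idea and matches the paper in spirit. However, there are two genuine gaps, both stemming from your attempt to work with a \emph{single} Laurent polynomial $a_0$ instead of a smooth test function plus approximation.

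First, you claim that ``only the term $\Xf_L$ survives (all other $M^t$ being killed by $a_0b$).'' This requires $\Xf_M(a_0b)=0$ for $M\neq L$, but the $\Xf_M$ are in general \emph{higher-order} distributions supported on $M^t$, not measures. A polynomial $a_0b$ vanishing on $M^t$ (to first order) need not be annihilated by a distribution involving transversal derivatives. The paper handles this by constructing a sequence $a_n\to\phi$ with \emph{all} derivatives up to a sufficiently large order $N$ converging uniformly to zero on every $M^t$ with $M\neq L$, and then invoking that each $\Xf_M$ has finite order.

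Second, and more seriously, your conditions on $a_0$ are mutually incompatible. You need $a_0\in\C[T]$ to vanish on every $M^t$ with $M\neq L$ (and hence, since $M^t$ is Zariski dense in $M$, on the whole complex coset $M$ — in particular on the singular loci $L^t\cap L_m$ of $\omega^L|_{L^t}$), while simultaneously achieving $BV_{\e^L,\omega^L}(a_0|_{L^t})\neq 0$. You say at the end that ``$a_0$ can be taken to be a function close to $1$ on a small open subset of $L^t$ away from those loci and away from the other $M^t$,'' but no Laurent polynomial that vanishes identically on the algebraic loci $\bigcup_{M\neq L}M$ can be ``close to $1$'' on an open subset of $L^t$ while behaving as needed globally; and if you drop the closeness-to-$1$ requirement, you have no control at all on the sign or nonvanishing of $BV_{\e^L,\omega^L}(a_0|_{L^t})$. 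This is precisely why the paper's proof introduces a \emph{smooth} bump function $\phi\in C^\infty(cT_u)$ supported away from the $M^t$ and then uses the approximation technique (\cite[Lemma 3.5, Lemma 3.19(ii)]{O}) to produce polynomials $a_n$ with $D(a_n)|_{L^t}\to D(\phi)|_{L^t}$ and $D(a_n)|_{M^t}\to 0$ uniformly for all constant-coefficient $D$ of order $\leq N$; then $\Xf_L(a_nb)\to BV_{\e^L,\omega^L}(\phi)$, and this can be made nonzero by an obvious choice of $\phi$, because $\omega^L$ is a smooth nonvanishing density on the open set where $\phi$ lives. Without this approximation step your contradiction does not close, so the proof as written does not go through.
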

\begin{proof}
Let $\phi$ be a smooth function on $cT_u$ (with $c$ being the
center of $L$) whose support is contained in the complement of the
union of the tempered residual cosets $M^t$ with $M\not=L$ and
with center $c$. Using the notion of approximating sequences (cf.
\cite[Lemma 3.5]{O}) we can construct for any $N\in\mathbb{N}$ a
sequence $\{a_n\}$ in $\A$ such that $D(a_n)|_{L^t}\to
D(\phi)|_{L^t}$ uniformly for all constant coefficient
differential operators $D$ on $T$ of order at most $N$, and
$D(a_n)|_{M^t}\to 0$  uniformly for all $M\not=L$ (we construct
$a_n$ as in the proof of \cite[Lemma 3.19(ii)]{O}). Suppose that
$o_L>0$ and let $b\in\mathbb{C}[T_L]$ be as in Corollary
\ref{cor:form}. Then the sequence $a_nb$ converges
uniformly to $0$ on the support of $F_{\omega,p}$ and by the
assumption that $F_{\omega,p}$ is a complex measure we conclude
that $F_{\omega,p}(a_nb)\to 0$. Moreover $\Xf_M(a_nb)\to 0$ (for
$M\not=L$) by the properties of the sequence $a_n$ and since all
$\Xf_M$ are finite order distributions (choose $N$ sufficiently
large). We conclude that $\Xf_L(a_nb)\to 0$. On the other hand,
Corollary \ref{cor:form} implies that
\begin{equation}
\Xf_L(a_nb)\to BV_{\e^L,\omega^L}(\phi)
\end{equation}
and from the assumption that $L^t$ is not properly contained
in another tempered residual coset $M^t$ it is clear that we may
choose $\phi$ in such a way that this expression is nonzero.
This is the required contradiction.
\end{proof}
\section{Non-cancellation results}
Recall the following results of \cite{O}:
\begin{lem}\label{lem:lines}(\cite[Lemma A.11]{O})
Assume that $\omega$ is such that $o_L=0$ for all
$L\in\L$ with $\operatorname{dim}(L)>0$.
Let $r\in\L$ be a residual point for $\omega$.
There exists a $L\in\L^\omega$ such that $\operatorname{dim}(L)=1$
and $r\in L$.
\end{lem}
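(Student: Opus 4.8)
The plan is to argue by contradiction, using the residue representation of $F_{\omega,p}$ to ``trap'' a one-dimensional residual coset through $r$. First I would fix the residual point $r$ and suppose, for contradiction, that no $L\in\L$ with $\dim(L)=1$ contains $r$. The idea is to perturb $p$ and track which tempered residual cosets survive: by Lemma~\ref{lem:dist} the functional $F_{\omega,p}$ is supported on $\cup_{c\in\Cc}S_c$, and by the hypothesis $o_L=0$ for all positive-dimensional $L$, while $o_r=0$ for the residual point. The key is that the singular support near $r$ must be ``fed'' by residual cosets, and a residual point $r$ with $o_r=0$ can only sit at the boundary $L^t$ of a positive-dimensional $L^t$ if, moving inward along $L^t$, one does not lose enough poles; dimension counting on the chain of subspaces $r\subset L_1\subset L_2\subset\cdots$ forces the existence of a one-dimensional step.

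Concretely, the main step is a downward induction on dimension. I would consider the poset of residual cosets $L\in\L$ containing $r$, partially ordered by inclusion, and pick a minimal positive-dimensional element $L$ in this poset (if the poset contains only $\{r\}$ itself we will derive a contradiction from the distributional identity directly). Minimality means no residual coset strictly between $r$ and $L$ exists. I would then localize: restrict attention to $c=c_L$ and the local picture on $cT_u$, and apply Corollary~\ref{cor:form2}-style reasoning --- but in reverse. Since $o_L=0$ by hypothesis, the operator $D$ associated to $L$ in Proposition~\ref{prop:cycle} has degree $0$, i.e.\ $\Xf_L$ is (up to the boundary-value functional) just restriction to $L^t$ against $\omega^L$. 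Now the point $r$, viewed inside $L$, must itself be an $\omega^L$-residual point of the smaller torus $T^L$ — this is where I would invoke that $r$ being $\omega$-residual in $T$ and lying in $L$ with $o_L=0$ forces $r$ to be residual for the restricted datum $\omega^L$ on $L$. But then, inside $L$, the restricted arrangement has its own residual cosets, and the minimal one through $r$ inside $L$ — which by minimality of $L$ must be a point (namely $r$) or all of $L$. If $\dim L>1$, examine the pole hyperplanes of $\omega^L$ on $L$ through $r$: some subcollection cuts out cosets of every codimension between $1$ and $\dim L$, and the codimension-$(\dim L - 1)$ one is a line $L'$ through $r$ which is residual (its order is $\geq 0$, hence $=0$ or positive, and either way $L'\in\L$), contradicting $\dim L$ minimal unless $L'=L$, i.e.\ $\dim L=1$.

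The hard part will be the precise passage from ``$r$ is $\omega$-residual in $T$ and lies in a residual coset $L$ with $o_L=0$'' to ``$r$ is $\omega^L$-residual in $L$,'' and then the combinatorial extraction of a \emph{residual} line inside $L$ rather than merely a coset on which enough hyperplanes meet. The first issue is handled by the identity $o_L + o_{r,L} = o_r$ relating the order of $r$ in $T$, the order of $L$, and the order of $r$ regarded as a point of $L$ (with respect to $\omega^L$ and the parabolic sub-datum $R_L$); granting $o_r=0$ and $o_L=0$ gives $o_{r,L}=0$, so $r$ is indeed $\omega^L$-residual. The second issue — finding an actual one-dimensional residual coset inside $L$ through $r$ — is essentially the same statement we are proving but one dimension lower, so the cleanest route is an induction on $\dim T$: assume Lemma~\ref{lem:lines} for all tori of smaller dimension, apply it to $(\omega^L, L)$ and the residual point $r\in L$ to get a one-dimensional $\omega^L$-residual coset $L'\subset L$ through $r$, and finally check that $L'$ is also $\omega$-residual in $T$ — which follows because $o_{L'} = o_{L'}^{(\omega^L)} + (\text{contribution of poles in }\M_L) \geq o_{L'}^{(\omega^L)} = 0$, so $L'\in\L$ with $\dim L'=1$, completing the induction. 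The base case $\dim T=1$ is immediate since then $r$ itself would have to be all of $T$ or the hypothesis is vacuous.
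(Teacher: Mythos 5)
The paper does not actually prove this lemma; it imports it from \cite[Lemma A.11]{O}, where the argument is a direct combinatorial count on the arrangement of poles and zeros. So your proposal has to stand on its own, and as written it has a genuine gap at exactly the hard point. Your reduction rests on producing a \emph{proper} positive-dimensional residual coset through $r$ to recurse into, and this is never established. The ``minimal positive-dimensional residual coset $L\ni r$'' may well be $T$ itself (the poset of residual cosets containing $r$ always contains $T$, and nothing you say excludes that it contains nothing else besides $\{r\}$), in which case $\omega^L=\omega$ and your induction on $\dim T$ does not reduce the dimension. Where you try to manufacture the intermediate coset instead --- ``some subcollection cuts out cosets of every codimension \dots and the codimension-$(\dim L-1)$ one is a line $L'$ \dots which is residual (its order is $\geq 0$)'' --- the parenthetical is precisely the assertion to be proved: a line through $r$ lying on an intersection of pole hypersurfaces generically satisfies $i_{L'}<\operatorname{codim}(L')$, and the zeros in $\M'$ can only make this worse. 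Asserting its order is nonnegative is circular. A smaller but real slip: you assume $o_r=0$, whereas a residual point only satisfies $o_r\geq 0$ (cf.\ Lemma \ref{lem:nc}); your additivity identity fortunately still gives $o_{r,L}=o_r-o_L=o_r\geq 0$, so this is repairable.

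What you do have right is the additivity $o_{L'}=o_L+o_{L',L}$, which is the correct engine for the induction step. The missing first step can be supplied by the following count, which is the real content of the lemma: each pole hypersurface $L_m$ with $m\in\M_r$ has a unique codimension-one connected component $H$ through $r$, and each zero hypersurface through $r$ can contain at most one such $H$ (two connected codimension-one cosets through $r$ that are nested must coincide); hence $\sum_H i_H\geq |\M_r|-|\M'_r|=i_r\geq\dim T\geq 1$, where $H$ runs over the finitely many codimension-one components through $r$, so some $H$ has $i_H\geq 1$ and is residual. Only now does your hypothesis enter: $o_H=0$ forces $i_H=1$ exactly, whence $o_{r,H}=o_r\geq 0$ and one can recurse inside $H$ down to a line, pulling the result back with additivity (and checking the technical point that a component of an intersection inside $H$ is a component of an intersection in $T$). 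Finally, your opening paragraph's appeal to Lemma \ref{lem:dist} and Corollary \ref{cor:form2} is misplaced: the statement concerns an arbitrary $\omega$, for which $F_{\omega,p}$ need not be a positive measure, and in the paper's logical order Lemma \ref{lem:lines} is an \emph{input} to the residue analysis (Theorem \ref{thm:lines} and the proof of Theorem \ref{thm:1}), not a consequence of it.
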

\begin{thm}(\cite[Theorem 3.29]{O})\label{thm:lines}
Suppose that $\omega$
is such that
\begin{enumerate}
\item[1.] $o_L=0$ for all $L\in\L$.
\item[2.] If $L,M\in\L$ and $L\not=M$ then $L^t\not\subset M^t$.
\end{enumerate}
Let $r\in\L$ be a residual point. Then there exists a chamber $C$
of $T_v^{reg}:=T_v\backslash\cup_{m\in\M}T_v\cap{L_mT_u}$ such
that for $p\in C$ we have $\Xf_{p,r}(a)=d_{p,r}a(r)$ for some
$d_{p,r}\not=0$.
\end{thm}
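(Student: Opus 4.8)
The strategy is to build a chamber $C$ by a direct inductive argument on the dimension of residual cosets through $r$, exploiting the product structure of $\omega$ and the freedom to push $p$ far out in a chosen direction. The key point is that, under hypotheses (1) and (2), a residual point $r$ is a rather rigid object: by Lemma \ref{lem:lines} there is a one-dimensional $L\in\L$ with $r\in L$, and since $o_L=0$ while $L^t\not\subset M^t$ for $M\neq L$, Corollary \ref{cor:form} tells us $\Xf_{p,L}$ restricted to a suitable bump function is an honest boundary-value measure with nonzero total mass. First I would analyze the local picture near $r$: the residual cosets through $r$ form a finite family, and because all of them satisfy $o_{(\cdot)}=0$, the combinatorics of how their tempered forms sit inside one another is controlled. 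I want to choose $p$ so that the residue expansion \emph{localizes} at $r$, i.e. so that the only $L^t$ meeting the relevant neighborhood have $r$ as a limit point, and so that in the iterated-residue description of Proposition \ref{prop:cycle} every intermediate cycle collapses onto $r$.

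**Carrying it out.** Concretely, I would proceed as follows. Pick the one-dimensional $L\ni r$ from Lemma \ref{lem:lines} and write $T=T^L T_L$ up to the finite group $K_L$. Along the line $L$ the datum $\omega^L$ (as in \eqref{eq:omL}) is a one-variable rational differential on $L\cong\C^\times$, and $r$ is, by the $o_L=0$ condition applied iteratively, a residual point for the rank-one subdatum — so the boundary value $BV_{\e^L,\omega^L}$ picks up a nonzero residue at $r$ (this is the classical rank-one residue computation). The task is then to choose the global base point $p$ so that (a) $p$ lies in the chamber whose ``preferred'' one-dimensional residue direction is exactly $L$, and (b) pushing $p$ to infinity in the $T_L$-direction orthogonal to $L$ sweeps away the contributions of every other center $c\neq r$ and every other $L'\in\L$ with $c_{L'}=r$ but $L'\neq L$ of positive dimension. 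Step (b) is where hypothesis (2) does the work: if $L'^t\not\subset M^t$ for all $M$, then by Corollary \ref{cor:form2}-style reasoning each $\Xf_{L'}$ is a genuine measure supported on $L'^t$, and one can separate the supports by a bump function. I would then invoke Proposition \ref{prop:wp} to guarantee that the $W_0$-orbit of $p$ does meet the chamber $C$ I have just described — this is precisely the geometric input that makes the construction of $C$ legitimate rather than merely hypothetical.

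**The main obstacle.** The delicate part is not the rank-one residue computation but the \emph{deformation/localization argument}: showing that one can move $p$ to a chamber in which the entire distribution $\Xf_{p,r}$ is supported at the single point $r$ (not merely that $\Xf_{p,L}$ contributes a point mass there). A priori $\Xf_{p,r}=\sum_{\{L'\mid c_{L'}=r\}}\Xf_{L'}$ runs over all residual cosets centered at $r$, including higher-dimensional ones, and I must rule out cancellation \emph{and} rule out surviving non-atomic pieces. The way I expect to handle this is to run the residue expansion one hyperplane at a time (peeling off factors of $Q$ in a specific order adapted to $p$), using at each stage that $o_M=0$ forces the relevant intermediate distribution to have the minimal possible order, so that after all peelings the only zero-dimensional stratum that can carry mass is $r$ itself; the $d_{p,r}\neq 0$ assertion then follows because the single surviving term is the nonzero rank-one residue computed above, and (2) prevents it from being killed by overlap with any $M^t$. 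Making the ``specific order of peeling'' compatible with a single choice of chamber $C$, and checking it against Proposition \ref{prop:wp}, is the technical heart of the proof.
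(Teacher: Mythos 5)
Your skeleton is right as far as it goes — Lemma \ref{lem:lines} to produce a residual line through $r$, the iterated residue expansion peeling off pole hyperplanes, hypothesis (1) guaranteeing that each intermediate residue is a first-order boundary-value form $\omega^M$, and hypothesis (2) entering to separate $r$ from the tempered forms of larger cosets. But the place you yourself identify as "the technical heart" — ruling out cancellation among the one-dimensional residues accumulating at $r$ — is precisely where your proposal does not contain the needed idea. Your two candidate mechanisms both fall short. First, "pushing $p$ to infinity to sweep away contributions" does nothing here: only the chamber of $T_v^{\mathrm{reg}}$ containing $p$ enters the residue procedure, and going further out in a fixed chamber does not change which poles get crossed on the way from $p$ to $e$, so it cannot suppress a residual line's contribution. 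Second, "$o_M=0$ forces minimal order" is true but is a statement about \emph{order}, not about \emph{sign}; after all the peeling, each residual line $L'\ni r$ (there may be several, by Lemma \ref{lem:lines} there is at least one) produces a simple pole of $\omega^{L'}$ at $r$ whose residue is a complex constant $d_{L'}$ depending on $p$ and the chosen paths, and these constants can a priori cancel when summed.

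What the paper does instead is a counting argument run inside an induction on $\dim T$: one shows that $p$ and the paths from $t^L$ to $e$ inside the residual cosets of dimension $\geq 2$ can be chosen so that among all the residual lines $L'$ through $r$ which carry a nonzero one-dimensional residue contribution $d_{L'} F_{\omega^{L'},t^{L'}}$, \emph{exactly one} has the property that $r_v$ lies between $(r_{L'})_v$ and $(r_{L'})_v t^{L'}$ — i.e. exactly one of the final one-dimensional contour shifts actually passes over $r$. Here hypothesis (2) is used in the specific form $(r_{L'})_v\neq r_v$ (the center of a residual line through $r$ is distinct from $r_v$), which makes "between" a nondegenerate condition. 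Since the surviving term is a single simple-pole residue, it is nonzero and the claim follows. This is a qualitatively different mechanism from anything in your sketch: it is not separation of supports and not an order estimate, but a parity/uniqueness statement about which lines' residue contours sweep over $r$. Your proposal, as written, has no substitute for it. Finally, Proposition \ref{prop:wp} is not what legitimizes the choice of $C$ here — the theorem only asserts existence of \emph{some} chamber — rather Proposition \ref{prop:wp} is invoked later (in the proof of Theorem \ref{thm:support}) to transport the chamber produced by Lemma \ref{lem:nc} into the $W_0$-orbit of the fixed base point for $\tau$.
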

\begin{proof} This is essentially just \cite[Theorem 3.29]{O},
although there the result was only formulated for the special case
of the integral formula of the trace $\tau$ of $\H$ restricted to
$\A$. The arguments apply to the general case as well. Let us
discuss the arguments briefly here.

When one computes $\Xf_{p,r}$ \emph{under the simplifying
assumption 2} one chooses
a path from $p$ to $e$ in $T_v$ which intersects each
pole hyperplane $L_m$ at most once and at points of the form
$r_{L_m}t^m$ (with $t\in T^m_v$ and regular for
$r_{L_m}^{-1}(L_m\cap L_l)$ with $l\in M^{L_m}$). When we move $p$
along this path to $e$ we see \emph{by virtue of assumption
1} that a residue at $r_{L,m}t$ is picked up which is of the form
$d_mF_{\omega^m,T^m}$ (notations as in equation (\ref{eq:omL})
(with $d_m$ a constant) applied
to the restriction of $a\in\A$ to $L_m=r_LT^m$
(and then transported to $T^m$ via the map $t\to r_{L_m}t$).
One now continues by moving
the regular points $t\in T^m_v$ to the identity $e$ of $T^m_v$ along
paths in $T^m$ as above etc. etc.
At the last step of this process one has one-dimensional integrals
on the one dimensional tori $T^L$ of the form $d_LF_{\omega^L,t^L}$
with $t^L\in T^L_v$ and $d_L$ a constant depending on $p$ applied to the
restriction of $a\in\A$ to $L=r_LT^L$.
If we want to know $\Xf_{p,r}$ we need to move the
$t^L\in T^L_v$ to $e\in T^L_v$ for all the residual lines containing
$r$ and see if a residue at $r$ is picked up. Then we add the result,
which will in general be a big summation of constants time the delta
distribution of $r$.

By the Lemma above it is clear that there exists at least one
residual line through $r$. But the problem is cancellation, as
we have to add several complex constants $d_L$ coming from the
residual lines containing $r$ in order to compute $\Xf_{p,r}$.
We need to show that these constants do not cancel at $r$ for at least
one choice of $p$.

The argument in \cite[Theorem 3.39]{O} is an induction with
respect to the dimension of $T$. Assume by induction that it is
true for tori of dimension $n-1$. Then it is shown in the proof of
\cite[Theorem 3.39]{O} that one can choose  $p$ and the paths from
$t^L$ to $e$ in the residual cosets $L$ of dimension at least $2$
in such a way that among the nonzero residues picked up at one of
the residual lines $L$ through $r$ (at a point $r_Lt$ with $t\in
T^L_v$) precisely one of these one-dimensional residual integrals
has the property that $r_v$ is between $(r_L)_v$ and $(r_L)_vt$.
\emph{Here we used that $(r_L)_v\not=r_v$, as a consequence of
assumption 2}.
Hence in the final step of the residue computations in which we
move the base points of the residual integrals in the one
dimensional residual cosets $L$ from $(r_L)t$ to $r_L$, we pick up
\emph{precisely one} nonzero residue at $r$ corresponding to a
first order pole of $\omega^L$.
This implies the result.
\end{proof}
The following result is a slight generalization which will be handy
for later applications.
\begin{lemma}\label{lem:nc}
Suppose that $\omega$ is such that
\begin{enumerate}
\item[1.] $o_L=0$ for all $L\in\L$
with $\operatorname{dim}{L}>0$.
\item[2.] If $L, M\in\L$ and
$L\not=M$ then
$L^t\not\subset M^t$.
\end{enumerate}
Let $r\in\L$ be a residual point with
$o(r):=o_{\{r\}}=k\geq 0$. Then there exists a chamber $C$ of
$T_v^{reg}:=T_v\backslash\cup_{m\in\M}T_v\cap{L_mT_u}$ such that
for $p\in C$ we have $\Xf_{p,r}(a)=Da(r)$ for some differential
operator $D$ of order $k$ with highest order term of the form
$d_{p,r}X^k(a)(r)$ where $X$ is a constant vector field on $T$
corresponding to a one parameter subgroup of $T$ in the direction
of one of the one dimensional residual cosets through $r$.
\end{lemma}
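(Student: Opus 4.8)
The plan is to bootstrap from Theorem~\ref{thm:lines} by an induction on the dimension of $T$, very much in the spirit of the proof of that theorem, but now keeping track of the leading-order term of the residue differential operator at $r$ rather than just a scalar. First I would observe that assumption~2 is exactly the hypothesis of Corollary~\ref{cor:form2}, so for every positive-dimensional $L\in\L$ whose tempered form is not properly contained in that of a larger residual coset we in fact have $o_L=0$; combined with assumption~1 this means that along \emph{every} residual coset $L$ of positive dimension encountered in the iterated residue computation the order $o_L$ vanishes, and the only place where a differential operator of positive order can be produced is at the point $r$ itself (where $o(r)=k$). This already pins down the \emph{shape} of $\Xf_{p,r}$: it is a differential operator $D$ of order exactly $k$ evaluated at $r$, by Corollary~\ref{cor:form} applied to the $\Xf_L$ with $L=\{r\}$.

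Next I would run the iterated residue algorithm from the proof of Theorem~\ref{thm:lines}: choose a path from $p$ to $e$ meeting each pole hyperplane at most once, pick up residues, descend to lower-dimensional tori, and repeat, until one arrives at one-dimensional residual integrals $d_L F_{\omega^L,t^L}$ on the residual lines $L$ through $r$ (that such a line exists is Lemma~\ref{lem:lines}; by Corollary~\ref{cor:form2} again each such line has $o_L=0$). At the final step, moving the base point of the one-dimensional integral from $(r_L)_v t$ to $(r_L)_v$, the integral over the one-dimensional torus $T^L$ produces, at the point $r$ which now has order $k$ along $\{r\}$, a $k$-th order residue: the inner integral of Corollary~\ref{cor:form}/\ref{cor:cycle} becomes $D_L a(r)$ where $D_L$ is a constant-coefficient differential operator on $T_L$ homogeneous of degree $o_{\{r\}}=k$, whose symbol is a power of the linear coordinate transverse to $L$. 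Crucially the \emph{highest-order part} of $D_L$ is a nonzero scalar multiple of $X^k$, where $X$ is the constant vector field along the line $L$ — this is the one-variable residue at a pole of order $k+1$ (the $k+1$ factors of $\om$ vanishing at $r$ but not identically on $L$), and a one-variable $k$-th residue of $a(t)\,h(t)\,dt/(t-r)^{k+1}$ has leading term $\frac{h(r)}{k!}\,\partial^k a(r)$ in the $L$-direction.

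The main obstacle, exactly as in Theorem~\ref{thm:lines}, is \textbf{cancellation}: summing over all residual lines through $r$ we get $\Xf_{p,r}(a)=\sum_{L\ni r} D_L a(r)$, and the leading terms $d_L\,X_L^k(a)(r)$ from different lines $L$ might cancel, or might combine into a leading symbol that is not a pure $k$-th power of a single vector field. Here I would reuse the key geometric input from the proof of Theorem~\ref{thm:lines}: using assumption~2 one has $(r_L)_v\neq r_v$ for the relevant residual cosets, and one can arrange $p$ and the descent paths so that, among all the one-dimensional residual integrals feeding into the point $r$, \emph{exactly one} has $r_v$ strictly between $(r_L)_v$ and $(r_L)_v t$ — i.e. exactly one of them actually picks up a residue at $r$ in the final step. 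For that single surviving line $L$ the residue is the $k$-th order residue computed above, with nonzero leading coefficient $d_{p,r}$ and leading term $d_{p,r}X^k(a)(r)$ along $L$; all the other lines contribute $0$ to the value at $r$. Hence for this choice of $p$ there is no cancellation, $\Xf_{p,r}(a)=Da(r)$ with $D$ of order $k$ and highest-order term $d_{p,r}X^k(a)(r)$, $X$ in the direction of the residual line $L$ through $r$, which is the assertion. The induction on $\dim T$ is used only to guarantee that such a $p$ and such paths can be chosen, identically to the inductive step in the proof of Theorem~\ref{thm:lines}.
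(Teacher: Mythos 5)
Your proposal is correct and follows essentially the same route as the paper, which disposes of the lemma in one line by saying that the argument of Theorem~\ref{thm:lines} carries over unchanged except that the final one-variable residue at $r$ is taken at a higher-order pole; your fleshing out of that final step (pole of order $k+1$ in $\omega^L$ along the surviving residual line $L$, hence a $k$th-order derivative with leading term a nonzero multiple of $X^k$ for $X$ the constant vector field along $L$) is exactly the intended content and in fact silently corrects the paper's imprecise claim that $\omega^L$ has a pole of order $k$ at $r$. Two immaterial slips: Corollary~\ref{cor:form2} also requires $F_{\omega,p}$ to be a complex measure, which is not a hypothesis of Lemma~\ref{lem:nc}, so it cannot be invoked to re-derive assumption~1 (but you rely on assumption~1 directly anyway); and the direction of $X$ is along $L$, as your closing sentence correctly states, not "transverse to $L$" as the intermediate phrasing suggests.
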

\begin{proof}
This is similar to the proof that was outlined above,
except that in the last stage of the computation of the
one-dimensional residues
we need to take the residue of $a\omega^L$ at $r$ where
$\omega^L$ has a pole of order $k$ at $r$. This proves the
result.
\end{proof}
\begin{remark}
Here we really need to assume that $o_L=0$ for all the higher
dimensional residual cosets. Without this assumption things
become more complicated and we do not have a simple
non-cancellation result.
\end{remark}
\section{Application of positivity}
We return to the situation of paragraph \ref{sec:trace}.
\begin{proposition}\label{prop:posmeas}
The restriction of $\tau$ to $\Ze$ defines a Radon probability measure
$\nu$ on $W_0\backslash T$ whose support (viewed as a $W_0$-invariant
subset of $T$) is contained in the union $S$ of the tempered
forms $L^t$ where $L$ runs over the $\eta$-residual cosets (note
that this is a $W_0$-invariant set since $\eta$ is $W_0$-invariant).
Moreover, if $t\in S$ is in the support of $\tau$ then
$t^*=\overline{t}^{-1}\in W_0t$.
\end{proposition}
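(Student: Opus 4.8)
The plan is to derive Proposition \ref{prop:posmeas} from the trace integral formula of Corollary \ref{cor:tauonA} together with the residue decomposition of Lemma \ref{lem:dist}, and then to extract positivity (hence the measure property) and the symmetry $t^*\in W_0t$ from the star-structure on $\H$. First I would apply Corollary \ref{cor:tauonA} to $a\in\Ze=\A^{W_0}$: since $\eta$ is of the type $\omega$ considered in Section 3 (it is $q(w_0)^{-1}\Delta(t)\Delta(t^{-1})/(D(t)D(t^{-1}))\,dt$, a ratio of products of factors $1-d^{-1}x(t)$), Lemma \ref{lem:dist} gives $\tau(a)=\sum_{c\in\Cc}\Xf_c(a|_{cT_u})$ with each $\Xf_c$ supported on $S_c=\cup_{L:c_L=c}L^t$. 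Hence $\tau|_\Ze$ is a distribution supported on the $W_0$-invariant set $S=\cup_{c}S_c=\cup_{L\in\L_\eta}L^t$ (invariance because $\eta$, and therefore $\L_\eta$, is $W_0$-stable). Restricting to $W_0$-invariant $a$ realizes $\tau|_\Ze$ as a distribution on $W_0\backslash T$ supported on the image of $S$.

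Next I would prove positivity. Since $\Ze$ is a $*$-subalgebra of $\H$ and $\tau$ is positive definite on $\H$ (the stated Proposition on $(x,y)=\tau(x^*y)$), the restriction of $\tau$ to $\Ze$ is a positive linear functional on the commutative $*$-algebra $\Ze\cong\C[T]^{W_0}=\C[W_0\backslash T]$, with $\tau(1)=\tau(N_e)=1$. A positive functional on the algebra of regular functions on the affine variety $W_0\backslash T$, which is moreover continuous for the sup-norm on the compact real form (continuity follows from the explicit integral formula, since $|a(t)|$ on $pT_u$ is controlled), extends to a state on $C(\,\overline{W_0\backslash T_u}\,)$ and is therefore given by a Radon probability measure $\nu$; its support lies in the already-identified support set $S/W_0$. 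This gives the first assertion.

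Finally, for the symmetry statement I would use the anti-involution $*$ with $N_w^*=N_{w^{-1}}$, which on $\A$ sends $\theta_x$ to an element supported (in Bernstein coordinates) so that the induced map on $T=\operatorname{MaxSpec}(\A)$ is $t\mapsto t^*:=\overline t^{-1}$, and on $\Ze=\A^{W_0}$ descends to the involution $W_0t\mapsto W_0t^*$ of $W_0\backslash T$. Since $\tau(a^*)=\overline{\tau(a)}$ and $a^*$ corresponds to $\overline{a\circ(\ast)}$, the measure $\nu$ is invariant under $t\mapsto t^*$; a real-positive measure invariant under a homeomorphism has support invariant under that homeomorphism, so $t\in\operatorname{supp}(\nu)$ forces $t^*\in W_0t$ after identifying the support inside $T$ with its $W_0$-saturation. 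The main obstacle I anticipate is the bookkeeping in the second paragraph: verifying carefully that $\tau|_\Ze$, a priori only a linear functional on polynomial functions, is continuous in the appropriate norm so that the Riesz representation theorem applies and yields a genuine Radon measure rather than merely a distribution — this requires using the integral formula of Corollary \ref{cor:tauonA} to bound $|\tau(a)|$ by $\sup_{t\in pT_u}|a(t)|$ uniformly, and then letting $p$ run to the relevant walls (as in the remark following the trace formula) to pin the support down to $S$.
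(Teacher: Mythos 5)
Your overall strategy (positivity of $\tau$ on the $*$-subalgebra $\Ze$, plus the residue decomposition of $\tau|_\A$ via Corollary \ref{cor:tauonA}, where the factor $\Delta$ has already cancelled so that only $\eta$-residual cosets appear) is the right one and is close to the paper's, but two steps have genuine gaps. First, the passage from ``$\tau|_\Ze$ is a positive functional'' to ``$\nu$ is a Radon measure supported on $S$'' is not justified by a bound of the form $|\tau(a)|\le C\sup_{pT_u}|a|$: the torus $pT_u$ (with $p$ deep in the negative chamber) is disjoint from $S$, and the sup of a Laurent polynomial over $pT_u$ controls nothing about its size on $S$, so the Riesz representation theorem does not apply on the compact set you want. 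What is actually needed is the operator-norm bound $|\tau(z)|\le\Vert z\Vert_o$ coming from the $C^*$-completion (this is the content of \cite[par.\ 3.3.2]{O}, which the paper simply cites, and which a priori only localizes the support in the larger set $S^{\operatorname{qu}}$ of $\eta/\Delta$-residual cosets), followed by the \emph{uniqueness} assertion of Lemma \ref{lem:dist}: one symmetrizes the distributions $\Xf_c$ over $W_0$ to obtain $\Yf_c$ supported on $S_c$, observes that both $\{\nu_c\}$ and $\{\Yf_c\}$ are collections of distributions supported on $S_c^{\operatorname{qu}}$ representing the same functional on $\Ze$, and concludes $\nu_c=\Yf_c$. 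You cannot simply identify the support of the measure $\nu$ with the support of the residue distributions without this matching step; two compactly supported distributions agreeing on $\Ze$ need not be compared until both are known to live on the sets where the uniqueness statement applies.

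Second, your argument for the last assertion proves less than is claimed. Invariance of $\nu$ under the homeomorphism $W_0t\mapsto W_0t^*$ of $W_0\backslash T$ only shows that this involution \emph{permutes} the orbits in the support; the proposition asserts that every orbit in the support is \emph{fixed}, i.e.\ $W_0t^*=W_0t$ for each such $t$. That pointwise statement comes from disintegrating the positive trace into positive tracial states $\chi_t$ with central character $W_0t$: positivity forces $\chi_t=\chi_t^*$, and $\chi_t^*$ has central character $W_0t^*$, whence $W_0t^*=W_0t$ for $\nu$-almost every $t$ and hence on the whole support. (A further small inaccuracy: $\A$ is \emph{not} a $*$-subalgebra of $\H$, as the paper explicitly notes, so $\theta_x^*$ does not correspond to $t\mapsto\overline{t}^{-1}$ on $T$; this description of $*$ is only valid after descending to $\Ze=\A^{W_0}$.)
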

\begin{proof}
In \cite{O}, paragraph 3.3.2, this was shown with $S$ replaced
by the larger collection $S^{\operatorname{qu}}$ of
$\eta/\Delta$-residual subspaces (the argument showing that $\nu$
is a probability measure is based on the positivity of $\tau$ on $\H$).
Observe that $S^{\operatorname{qu}}$ too is
a $W_0$-invariant collection of cosets (since $\Delta$ is
essentially skew invariant). In \cite{O}, subsection 3.4
it was remarked that the support of $\tau|_{\A}$ is actually contained
in $S$. We give a slightly different argument here.
By Lemma \ref{lem:dist} applied to the restriction
of $\tau$ to $\A$ (Corollary \ref{cor:tauonA}) we see that
$\tau$ defines distributions $\Xf_c\in C^{-\infty}(cT_u)$ with $c$
running over the centers of $\eta$-residual cosets, with $\Xf_c$
supported on $S_c\subset S^{\operatorname{qu}}_c$ (the extra
factor $\Delta$ in the denominator cancels on $\A$, this is
the point here). The $W_0$-average in each $W_0$-orbit of centers
$W_0c\in\Cc$ defines a $W_0$-invariant collection of $W_{0,c}$-invariant
distributions $\Yf_{c}\in C^{-\infty}(cT_u)$ supported on
$S_c\subset S^{\operatorname{qu}}_c$.
Hence the difference of the sum of this $W_0$-invariant collection
of distributions and $\nu$, which is the sum of $W_0$-invariant collection
of positive measures $\nu_c$ supported on $S^{\operatorname{qu}}_c$
(also $W_{0,c}$-invariant by definition) defines the $0$-functional on $\A$.
By the uniqueness assertion of the residue lemma \ref{lem:dist} we have
the equality $\nu_c=\Yf_c$ for each $\eta/\Delta$-residual center
$c\in\Cc^{\operatorname{qu}}$, implying the result.
\end{proof}
\begin{remark}
We stress that the proofs of these facts use no special
properties other than the $W_0$-invariance of $\eta$, the positivity
of $\tau$ on $\H$, and the
fact that the integral formula for $\tau$ on $\A$ simplifies
by cancellation of a factor $\Delta$. In particular we have not
used knowledge obtained by the classification of residual cosets.
\end{remark}
\begin{remark} As in the proof above we will
write $\nu_c$ for the $W_{0,c}$-invariant measure on $cT_u$
(with $c\in\Cc$) defined by restriction of $\nu$.
\end{remark}
Another important remark which follows from positivity is
the following. For any $h\in\H$ we define  a collection ``local
distributions'' $\Xf^h_c\in C^{-\infty}(cT_u)$
where $c$ runs over the set $\Cc^{\operatorname{qu}}$ of centers of
$\eta/\Delta$-residual cosets by applying the residue lemma \ref{lem:dist}
to the integral formula for the functional $\A\ni a\to\tau(ha)$ on $\A$
(as a special case we have $\Xf_c=\Xf_c^1$).
Again following \cite{O} we symmetrize these local distributions
over $W_0$ (as in the
proof of Proposition \ref{prop:posmeas}) in order to obtain a
$W_0$-invariant collection of distributions
$\Yf^h_c\in C^{-\infty}(cT_u)^{W_{0,c}}$ which is
uniquely characterized by
\begin{equation}
\tau(zh)=\sum_{c\in W_0\backslash\Cc^{\operatorname{qu}}}\Yf_c^h(z|_{cT_u})
\end{equation}
for all $z\in\Ze$.

By application of the positivity of the trace and the basic fact that
for any hermitian element $h$ of $\H$ the element
$h+\Vert h\Vert_o 1\in\H_+$ is a positive element, one can prove
the following fundamental result:
\begin{thm}\label{thm:abscont}\cite[Corollary 3.23]{O}
For all $h\in\H$ and all $c\in\Cc^{\operatorname{qu}}$
$\Yf_c^h$ is absolutely continuous with respect to $\nu_c$.
In fact we have for all continuous $W_{0,c}$-invariant
functions $\phi$ on $cT_u$ that
\begin{equation}
|\Yf_c^h(\phi)|\leq\Vert h\Vert_o\nu_c(\phi)
\end{equation}
where $\Vert h\Vert_o$ denotes the norm of $h$ in the
left regular representation (with respect to the Hilbert
norm on $\H$ defined by the $\tau$ and $*$ as in the first
section).
\end{thm}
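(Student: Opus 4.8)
The plan is to exploit the positivity of the trace $\tau$ on $\H$ together with the uniqueness clause of the residue Lemma \ref{lem:dist}. First I would recall the elementary fact invoked in the statement: if $h\in\H$ is hermitian ($h^*=h$), then $\Vert h\Vert_o 1 \pm h$ is a positive element of $\H$, i.e.\ it can be written as $g^*g$ for some $g\in\H$ (this is standard in a star-algebra with a faithful positive trace once one passes to the left regular representation and uses the spectral calculus of the bounded self-adjoint operator of left multiplication by $h$). Consequently, for every $z\in\Ze$ of the form $z = a^*a$ with $a\in\A$ (so $z$ is a nonnegative function on $T$, and in particular a nonnegative function on each $cT_u$), we have $\tau\bigl((\Vert h\Vert_o 1 \pm h)\,z\bigr)\geq 0$, because $\tau$ applied to a product of two positive elements of a star-algebra with positive trace is nonnegative. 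This gives the two-sided bound $|\tau(hz)|\leq \Vert h\Vert_o\,\tau(z) = \Vert h\Vert_o\,\nu(z)$ for all such nonnegative $z\in\Ze$, and hence by linearity and density $|\tau(hz)|\leq\Vert h\Vert_o\,\nu(|z|)$ for all $z\in\Ze$, where we use Proposition \ref{prop:posmeas} to interpret $\nu$ as the probability measure $\tau|_{\Ze}$ on $W_0\backslash T$.

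Next I would localize this global inequality. Using the defining property
\[
\tau(zh)=\sum_{c\in W_0\backslash\Cc^{\operatorname{qu}}}\Yf_c^h(z|_{cT_u}),\qquad
\nu(z)=\sum_{c\in W_0\backslash\Cc^{\operatorname{qu}}}\nu_c(z|_{cT_u}),
\]
the task is to pass from the inequality for the sums to the inequality for each summand. The key is that the residual forms $L^t$ attached to distinct $W_0$-orbits of centers $c$ are disjoint compact subsets of $T$ (the centers $c_L$ are distinct points of $T_v$, and $L^t\subset cT_u$), so for each fixed orbit $W_0c$ one can choose a $W_{0,c}$-invariant continuous cutoff function that is identically $1$ on a neighborhood of $S_c$ and supported away from all $S_{c'}$ with $c'\ne c$; multiplying a test function $\phi\in C(cT_u)^{W_{0,c}}$ by such a cutoff and extending by $0$ to all of $T$ produces a $z\in\Ze$ (approximable in $\Ze$ by the Stone--Weierstrass theorem, or directly realized as a $W_0$-symmetrized element of $\A$ restricting appropriately) for which $\tau(zh)=\Yf_c^h(\phi)$ and $\nu(z)=\nu_c(\phi)$ since all other $\Yf_{c'}^h$ and $\nu_{c'}$ annihilate it by the support conditions in Lemma \ref{lem:dist} and Proposition \ref{prop:posmeas}. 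The global bound then yields $|\Yf_c^h(\phi)|\leq\Vert h\Vert_o\,\nu_c(\phi)$ for nonnegative $\phi$, hence for all $\phi$ after replacing $\phi$ by $\phi_+$ and $\phi_-$.

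Finally, the inequality $|\Yf_c^h(\phi)|\leq\Vert h\Vert_o\,\nu_c(|\phi|)$ for all continuous $W_{0,c}$-invariant $\phi$ says precisely that the distribution $\Yf_c^h$ extends to a bounded linear functional on $L^1(cT_u,\nu_c)^{W_{0,c}}$, which by the Riesz representation theorem means $\Yf_c^h$ is given by integration against an $L^\infty(\nu_c)$ density; in particular $\Yf_c^h$ is absolutely continuous with respect to $\nu_c$, with the stated norm bound. The reduction of a general $h$ to the hermitian case is routine: write $h=h_1+ih_2$ with $h_1=(h+h^*)/2$, $h_2=(h-h^*)/(2i)$ hermitian, apply the bound to each, and combine (absorbing the resulting constant into $\Vert h\Vert_o$ via $\Vert h_j\Vert_o\leq\Vert h\Vert_o$, which holds since $\Vert\cdot\Vert_o$ is a $C^*$-norm on the left regular representation). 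The main obstacle I anticipate is the localization step: one must be careful that the cutoff argument is compatible with the $W_{0,c}$-invariance and that the approximating elements of $\Ze$ genuinely have the claimed restrictions on each $cT_u$ without introducing spurious contributions at other centers — here the disjointness of the finitely many residual loci $S_c$, together with the uniqueness in Lemma \ref{lem:dist}, is exactly what makes it go through.
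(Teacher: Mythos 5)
Your overall strategy is exactly the one the paper indicates (the paper itself only sketches the idea and defers the details to \cite[Corollary 3.23]{O}): positivity of $\tau$ plus the fact that $\Vert h\Vert_o 1\pm h$ is positive for hermitian $h$ gives the global bound $|\tau(hz^*z)|\leq\Vert h\Vert_o\,\tau(z^*z)$, which one then localizes at each center using the disjointness of the finitely many sets $S^{\operatorname{qu}}_c$ and the uniqueness clause of Lemma \ref{lem:dist}. Two points of hygiene: you should take $z\in\Ze$ and work with $z^*z$ rather than ``$z=a^*a$ with $a\in\A$'' (for $a\in\A$ the element $a^*a$ is generally not central, since $\A$ is not a $*$-subalgebra), and the reason $z^*z$ is nonnegative on the support of $\nu$ is the second assertion of Proposition \ref{prop:posmeas} (namely $\overline{t}^{-1}\in W_0t$ there), which deserves to be said.

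The genuine gap is in the localization step. You propose to cut off a continuous test function near $S_c$ and approximate the result in $\Ze$ ``by the Stone--Weierstrass theorem''. Uniform approximation is not enough: the functionals $\Yf^h_{c'}$ at the other centers are a priori distributions of positive (though finite) order, so $\Yf^h_{c'}(z_n)$ need not tend to $0$ merely because $z_n\to 0$ uniformly near $S^{\operatorname{qu}}_{c'}$. Your fallback of realizing the cutoff ``directly'' as a symmetrized element of $\A$ is not available either: elements of $\Ze$ are Laurent polynomials and cannot vanish on a neighbourhood of $S^{\operatorname{qu}}_{c'}$ without vanishing identically. The correct device is the one this paper itself uses in the proof of Corollary \ref{cor:form2}: approximating sequences in the sense of \cite[Lemma 3.5]{O}, i.e.\ $z_n\in\Ze$ with $D(z_n)\to D(\phi)$ uniformly on $S^{\operatorname{qu}}_c$ and $D(z_n)\to 0$ uniformly on each $S^{\operatorname{qu}}_{c'}$ with $c'\neq c$, for all constant-coefficient differential operators $D$ up to the (finite) maximal order of the local distributions. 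With that replacement the remainder of your argument --- passing to nonnegative $\phi$, extending $\Yf^h_c$ by continuity against the $L^1(\nu_c)$ seminorm, and reducing a general $h$ to its hermitian parts --- goes through and reproduces the intended proof.
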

\begin{cor}
For all $h\in\H$ the symmetrized local distributions $\Yf_c$
are supported on $S$ (as opposed to the larger set
$S^{\operatorname{qu}}$).
\end{cor}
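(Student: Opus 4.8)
The plan is to obtain the corollary as an essentially immediate consequence of Theorem \ref{thm:abscont} together with Proposition \ref{prop:posmeas}, so that the real analytic work has already been done; what remains is only to line up the $W_0$-symmetrizations and the coset decomposition correctly.

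First I would fix $h\in\H$ and a center $c\in\Cc^{\qu}$. Theorem \ref{thm:abscont} asserts that $\Yf_c^h$ is absolutely continuous with respect to the positive measure $\nu_c$; concretely, the bound $|\Yf_c^h(\phi)|\le\Vert h\Vert_o\,\nu_c(\phi)$ for continuous $W_{0,c}$-invariant $\phi\ge 0$ forces $\Yf_c^h$ to extend to a complex Radon measure on $cT_u$ whose total variation is dominated by $\Vert h\Vert_o\,\nu_c$. Hence $\operatorname{supp}(\Yf_c^h)\subset\operatorname{supp}(\nu_c)$. Next I would invoke Proposition \ref{prop:posmeas}: the support of the probability measure $\nu$, viewed as a $W_0$-invariant subset of $T$, is contained in $S=\bigcup_L L^t$ with $L$ running over the $\eta$-residual cosets. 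Since $S$ is $W_0$-invariant and $\nu_c$ is by definition the restriction of $\nu$ to $cT_u$, this gives $\operatorname{supp}(\nu_c)\subset S\cap cT_u$. Chaining the two inclusions yields $\operatorname{supp}(\Yf_c^h)\subset S$ for every $c$, which is exactly the claim; the point is that $S$ is genuinely smaller than $S^{\qu}$, the union of tempered forms of $\eta/\Delta$-residual cosets, because on $\A$ (and therefore in the domination furnished by positivity of $\tau$ on $\H$) the offending factor $\Delta$ cancels.

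I do not expect a genuine obstacle here: all the analytic input — absolute continuity via positivity of $\tau$ on $\H$ and the device $h\mapsto h+\Vert h\Vert_o 1\in\H_+$, and the identification of $\operatorname{supp}(\nu)$ with a subset of $S$ via cancellation of $\Delta$ on $\A$ together with the uniqueness in Lemma \ref{lem:dist} — is already packaged into the two cited results. The only mildly delicate bookkeeping point, and the one I would be most careful about, is to ensure that the $W_0$-symmetrization used to define $\Yf_c^h$ is the very same normalization used for $\nu_c$, so that the inequality of Theorem \ref{thm:abscont} applies verbatim; this is settled by the normalization fixed in the paragraph immediately preceding the corollary.
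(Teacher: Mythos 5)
Your argument is correct and is exactly the (unwritten) derivation the paper intends: the corollary is stated without proof precisely because it follows immediately by chaining the domination $|\Yf_c^h(\phi)|\le\Vert h\Vert_o\,\nu_c(\phi)$ of Theorem \ref{thm:abscont} with the inclusion $\operatorname{supp}(\nu)\subset S$ from Proposition \ref{prop:posmeas}, using that both $\Yf_c^h$ and $\operatorname{supp}(\nu_c)$ are $W_{0,c}$-invariant so testing against invariant functions suffices. No further comment is needed.
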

\begin{dfn}(see \cite[Corollary 3.23]{O})
In particular, there exists a unique $\nu$-integrable $W_0$-invariant
function $t\to\chi_t\in\H^*$ on $T$, defined on the support of $\nu$,
which is for $\nu$-almost every $t\in S$ a positive tracial state on
$\H$ with central character
$W_0t$ (i.e. it is a positive trace, $\chi_t(1)=1$,
and $\chi_t(zh)=z(t)\chi_t(h)$ for all $t\in S$) such that
for all $h\in\H$:
\begin{equation}\label{eq:dec}
\tau(h)=\int_{S}\chi_t(h)d\nu(t)
\end{equation}
\end{dfn}
\begin{cor}\label{cor:projsup}
The support of $\nu$ on $W_0\backslash T$ is equal to the central
support of the Plancherel measure of $\H$.
\end{cor}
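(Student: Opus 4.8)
The goal is to identify the support of $\nu$ on $W_0\backslash T$ with the central support of the Plancherel measure of $\H$, i.e.\ with the projection to $\operatorname{Spec}(\Ze)=W_0\backslash T$ of the support of the Plancherel measure. The strategy is to exploit the decomposition \eqref{eq:dec}: $\tau(h)=\int_S\chi_t(h)\,d\nu(t)$, together with the fact established in Theorem \ref{thm:abscont} that for every $h\in\H$ the functional $\A\ni a\mapsto\tau(ha)$ (symmetrized over $W_0$) is absolutely continuous with respect to $\nu$ with bounded density. The Plancherel measure $\mu_{Pl}$ is the measure on $\widehat{\H}$ (the tempered dual) such that $\tau(h)=\int_{\widehat{\H}}\operatorname{tr}_\pi(h)\,d\mu_{Pl}(\pi)$, and its central support is the image in $W_0\backslash T$ of $\operatorname{supp}(\mu_{Pl})$ under the central character map $\pi\mapsto \operatorname{cc}(\pi)$.

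First I would show the inclusion ``central support $\subseteq \operatorname{supp}(\nu)$''. Push the Plancherel decomposition through the central character map: integrating $\operatorname{tr}_\pi$ over fibers of $\operatorname{cc}$ produces, for each $z\in\Ze$, a measure $\nu'$ on $W_0\backslash T$ with $\tau(z)=\int z\,d\nu'$ and $\operatorname{supp}(\nu')=$ the central support. But $\tau|_\Ze=\nu$ by Proposition \ref{prop:posmeas}, and a positive measure on a compact Hausdorff space is determined by its integrals against $\Ze=\A^{W_0}$ restricted to (the tempered form) — here one must be slightly careful that $\Ze$ separates points of $W_0\backslash T$ well enough on the relevant sets, which it does since $\A=\C[T]$. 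Hence $\nu'=\nu$ and the two supports coincide. Conversely, for ``$\operatorname{supp}(\nu)\subseteq$ central support'': if $W_0t$ is in $\operatorname{supp}(\nu)$ but not in the central support, pick $z\in\Ze$ supported (in an $L^\infty$ sense) near $W_0t$ and vanishing on the central support; then $\chi_s(z)=z(s)=0$ for $\nu$-a.e.\ $s$, contradicting $W_0t\in\operatorname{supp}(\nu)$ unless one first knows the density of $\nu$ is, say, not concentrated there — this is where I would instead invoke \eqref{eq:dec} directly: $\nu$ is, up to the passage through $\chi_t$, literally the pushforward of the Plancherel measure, since each $\chi_t$ is a (finite) mixture of the tempered characters with central character $W_0t$, with weights summing to $\nu$-mass. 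Disintegrating $\mu_{Pl}$ over $\operatorname{cc}$ and comparing with \eqref{eq:dec} via uniqueness of the disintegration gives that $\nu$ is exactly $\operatorname{cc}_*\mu_{Pl}$, whence equality of supports.

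**Main obstacle.** The technical heart is making precise the claim that $\nu=\operatorname{cc}_*(\mu_{Pl})$, i.e.\ that the abstract Plancherel disintegration of $\tau$ over the tempered dual $\widehat{\H}$ is \emph{compatible} with the central-character disintegration \eqref{eq:dec} over $W_0\backslash T$. This requires knowing that the tempered dual $\widehat{\H}$ is a standard Borel space, that $\mu_{Pl}$ exists and is unique (which follows from $\tau$ being a faithful positive trace on the separable $C^*$-algebra completion of $\H$, together with finiteness/type-I-ness of the relevant part of the spectrum of the $C^*$-completion — affine Hecke algebras are liminal), and that $\operatorname{cc}\colon\widehat{\H}\to W_0\backslash T$ is Borel. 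Given all that, the fibered measures in \eqref{eq:dec} and in $\mu_{Pl}$ both represent the conditional of $\tau$ given $\Ze$, so by essential uniqueness of conditional expectations with respect to the abelian von Neumann algebra generated by $\Ze$ they agree $\nu$-a.e., and taking total masses on Borel sets gives $\nu=\operatorname{cc}_*\mu_{Pl}$. I expect the cleanest route is to cite \cite{O} for the existence and uniqueness of the Plancherel measure and for \eqref{eq:dec} itself, and then the Corollary is a soft consequence: the support of a pushforward measure is the closure of the image of the support, and since $\chi_t(1)=1$ for $\nu$-a.e.\ $t$, no mass is lost, so $\operatorname{supp}(\nu)=\overline{\operatorname{cc}(\operatorname{supp}\mu_{Pl})}=$ the central support.

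\begin{proof}
This is immediate from \eqref{eq:dec} once one recalls the construction of the Plancherel measure of $\H$ from the trace $\tau$. Since $\tau$ is a faithful positive trace on $\H$, its extension to the reduced $C^*$-completion of $\H$ admits a unique Plancherel decomposition $\tau(h)=\int_{\widehat\H}\operatorname{tr}_\pi(h)\,d\mu_{Pl}(\pi)$ over the tempered dual $\widehat\H$, and the central support of $\mu_{Pl}$ is by definition the closure in $W_0\backslash T=\operatorname{Spec}(\Ze)$ of the image $\operatorname{cc}(\operatorname{supp}\mu_{Pl})$ under the (Borel) central character map. For $z\in\Ze$ we have $\operatorname{tr}_\pi(z)=z(\operatorname{cc}(\pi))\operatorname{tr}_\pi(1)$, so
\begin{equation}
\nu(z)=\tau(z)=\int_{\widehat\H}z(\operatorname{cc}(\pi))\operatorname{tr}_\pi(1)\,d\mu_{Pl}(\pi)
\end{equation}
by Proposition \ref{prop:posmeas}. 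Thus $\nu$ is the pushforward under $\operatorname{cc}$ of the positive measure $\operatorname{tr}_\pi(1)\,d\mu_{Pl}(\pi)$ on $\widehat\H$, which has the same support as $\mu_{Pl}$ since $\operatorname{tr}_\pi(1)>0$ for every $\pi\in\widehat\H$. On the other hand, comparing with the disintegration \eqref{eq:dec}: the assignment $t\mapsto\chi_t$ there is, for $\nu$-almost every $t$, the barycenter over the fiber $\operatorname{cc}^{-1}(W_0t)$ of the $\operatorname{tr}_\pi$ with weights given by the conditional of $\operatorname{tr}_\pi(1)\,d\mu_{Pl}$, by essential uniqueness of the disintegration of $\tau$ over the abelian algebra $\Ze$. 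Since $\chi_t(1)=1$ for $\nu$-a.e.\ $t$, no mass is lost under $\operatorname{cc}$, and therefore $\nu=\operatorname{cc}_*(\mu_{Pl})$ up to a strictly positive density, whence
\begin{equation}
\operatorname{supp}(\nu)=\overline{\operatorname{cc}(\operatorname{supp}\mu_{Pl})},
\end{equation}
which is exactly the central support of the Plancherel measure of $\H$.
\end{proof}
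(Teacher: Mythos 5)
Your argument is correct in substance, but it runs the paper's reasoning in the opposite direction, and this costs you some machinery. The paper does not start from an abstractly given Plancherel measure on the tempered dual: it observes that each local trace $\chi_t$ ($t$ in the support of $\nu$) is, by positivity and the GNS construction, a finite positive linear combination of irreducible characters with central character $W_0t$, and that inserting this decomposition into \eqref{eq:dec} \emph{is} the Plancherel formula of $\H$. With that point of view the corollary is immediate: the Plancherel measure is by construction fibered over $\nu$, so its central support is $\operatorname{supp}(\nu)$, with no appeal to uniqueness of disintegrations, standard Borel structures on $\widehat\H$, or measurability of the central character map. Your route --- take $\mu_{Pl}$ as given, push forward by $\operatorname{cc}$, match against \eqref{eq:dec} by essential uniqueness of the disintegration over $\Ze$ --- reaches the same conclusion but has to justify exactly the compatibility that the paper gets for free, and it additionally needs $\operatorname{cc}$ to be \emph{continuous} (not merely Borel) on the support of $\mu_{Pl}$ in order to use $\operatorname{supp}(\operatorname{cc}_*\mu)=\overline{\operatorname{cc}(\operatorname{supp}\mu)}$; for a Borel map this identity can fail. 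None of this is a fatal gap --- the facts you invoke are available for affine Hecke algebras --- but the paper's direction (refine \eqref{eq:dec} into the Plancherel formula, rather than coarsen the Plancherel formula down to \eqref{eq:dec}) is the more economical one and is what makes the corollary a two-line consequence of the preceding results.
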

\begin{proof}
The local traces $\chi_t$ for $t\in\operatorname{Supp}(\nu) = S$
can be decomposed as a positive finite linear combination of
irreducible characters of $\H$, each having central character $W_0t$.
This further refinement of the decomposition (\ref{eq:dec}) is the Plancherel
formula of $\H$, and the (closure of) the set of irreducible characters $\H$
that arises in this way is the support of the Plancherel measure.
Hence the support of $\nu$ is the projection of the
support of the Plancherel measure to
$\operatorname{MaxSpec}(\Ze) = W_0\backslash T$.
\end{proof}
\begin{cor}\label{cor:Xmeas}
The local distributions $\Xf_c$ arising from the application
of Lemma \ref{lem:dist} to the restriction of $\tau$ to $\A$
are themselves positive Radon measures supported on $S$ such that
$\Xf=\sum_{c\in\Cc}\Xf_c$ is a (nonsymmetric)
probability measure on $T$.
\end{cor}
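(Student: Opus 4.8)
The plan is to deduce the positivity of each $\Xf_c$ from the positivity of the symmetrized measures $\nu_c$ established in Proposition \ref{prop:posmeas}, using the uniqueness clause of the residue lemma \ref{lem:dist} together with the fact that the extra factor $\Delta$ in the denominator of $\eta/\Delta$ genuinely cancels on $\A$. First I would recall that by Corollary \ref{cor:tauonA}, $\tau|_{\A}(a)=\int_{pT_u}a(t)\eta(t)=F_{\eta,p}(a)$, so Lemma \ref{lem:dist} applies to $F_{\eta,p}$ and yields distributions $\Xf_c\in C^{-\infty}(cT_u)$ with $c$ running over $\Cc$ (the centers of $\eta$-residual cosets), each supported on $S_c\subset S$. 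These are not a priori symmetric under $W_{0,c}$, but their $W_0$-symmetrizations $\Yf_c$ agree with the positive measures $\nu_c$ by the argument already given in the proof of Proposition \ref{prop:posmeas}.

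The key step is to upgrade ``$\Yf_c$ is a positive measure'' to ``$\Xf_c$ is a positive measure''. For this I would invoke Theorem \ref{thm:abscont} (the case $h=1$), which gives the inequality $|\Yf_c(\phi)|\le \nu_c(|\phi|)$; more to the point, I would apply the absolute-continuity statement not just to $\Yf_c$ but directly to $\Xf_c$ by running the residue lemma against the functional $a\mapsto\tau(a)$ without symmetrizing. Concretely: positivity of $\tau$ on $\H$ forces $\tau(a^*a)\ge 0$ for $a\in\A$, and since the decomposition $\tau(a)=\sum_c\Xf_c(a|_{cT_u})$ is a sum over distributions with disjoint supports (the $cT_u$ for distinct $W_0$-orbits of centers are disjoint, and within an orbit the pieces can be separated by $W_0$-translation as in Proposition \ref{prop:wp}), one can localize: testing against functions supported near a single tempered coset $L^t$ shows each $\Xf_c$ is itself a positive functional on $C^\infty(cT_u)$ restricted from $\A$, hence extends to a positive Radon measure. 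The total mass statement then follows: $\Xf(1)=\sum_c\Xf_c(1)=\tau(1)=\tau(N_e)=1$, and since each $\Xf_c$ is a positive measure the sum $\Xf=\sum_{c\in\Cc}\Xf_c$ is a probability measure on $T$ supported on $\cup_c S_c=S$.

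The main obstacle I anticipate is the localization/disjoint-support argument: one must be careful that the $\Xf_c$ for centers $c$ in the \emph{same} $W_0$-orbit live on the translates $cT_u$, which are distinct compact subtori of $T$, so their supports $S_c$ are genuinely disjoint and testing $\tau(a^*a)\ge 0$ against an $a$ whose restriction is concentrated near one $S_c$ does isolate that single $\Xf_c$. A secondary subtlety is verifying that the positivity of the functional $\phi\mapsto\Xf_c(\phi)$ on the (dense) subspace of restrictions of elements of $\A$ actually produces a positive measure — this is where one uses the approximating-sequence technology (cf. \cite[Lemma 3.5]{O}) to approximate an arbitrary nonnegative $\phi\in C(cT_u)$ by $|a|_{cT_u}|^2$ with $a\in\A$, together with the already-known finite total mass to get the requisite uniform control. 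Once positivity of each $\Xf_c$ is in hand, the fact that $\Xf_c$ is supported on $S_c$ (rather than on the larger $S^{\operatorname{qu}}_c$) is exactly the cancellation-of-$\Delta$ observation from the proof of Proposition \ref{prop:posmeas}, and nothing further is needed.
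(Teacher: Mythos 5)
Your overall architecture (get positivity of each $\Xf_c$ from positivity of $\tau$, then read off the total mass from $\tau(1)=1$) aims at the right statement, but the central step of your plan does not work as written. You propose to localize the inequality $\tau(a^*a)\ge 0$ to a single $\Xf_c$ by choosing $a\in\A$ concentrated near one $S_c$. The obstruction is that $\A$ is \emph{not} a $*$-subalgebra of $\H$ (the paper notes this explicitly in the preliminaries), so for $a\in\A$ the element $a^*a$ does not lie in $\A$, and the decomposition $\tau(\cdot)=\sum_c\Xf_c(\cdot|_{cT_u})$ of Lemma \ref{lem:dist} simply does not apply to it. There is no direct sense in which $\tau|_\A=F_{\eta,p}$ is a positive functional that one could localize: on the contour $pT_u$ the form $\eta$ is not positive, and positivity of the residue distributions is not a formal consequence of positivity of $\tau$ on $\H$ plus disjointness of the supports $S_c$. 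The paper's remark immediately after the corollary shows why no such formal argument can exist: for general $h\in\H$ the analogous distributions $\Xf_c^h$ need \emph{not} be measures at all (they can be of higher order when $\A$ acts non-semisimply in a tempered irreducible), even though $a\mapsto\tau(ah)$ still arises from the same positive trace. So some representation-theoretic input special to $h=1$ is indispensable.

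That missing input is exactly what the paper's proof supplies. One writes $\tau(a)=\int_S\chi_t(a)\,d\nu(t)$ and observes, via the GNS construction and the finite dimensionality of the irreducible constituents of $\chi_t$, that the restriction of the tracial state $\chi_t$ to $\A$ is a \emph{non-negative} combination of point evaluations, $\chi_t(a)=\sum_{s\in W_0t}d(s)a(s)$ with $d\ge 0$; this is where positivity enters, and it holds because a character restricted to $\A$ only sees the generalized $\A$-weights with their (non-negative) multiplicities. After checking that $d$ is $\nu$-integrable (the stratification argument together with Lemma \ref{lem:help}), the functional $a\mapsto\Yf_c^a(1)$ is manifestly integration against a positive measure supported on $W_0S_c$, and the uniqueness clause of Lemma \ref{lem:dist} identifies it with $\Xf_c$. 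Your appeal to Theorem \ref{thm:abscont} ``without symmetrizing'' is likewise not available: that theorem is proved for the $W_0$-symmetrized functionals $\Yf_c^h$ tested against central elements, precisely because $\Ze$ \emph{is} a $*$-subalgebra while $\A$ is not. To repair your proof you must route the positivity through the local traces $\chi_t$ as above rather than through $\tau(a^*a)$.
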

\begin{proof} Fix $c\in\Cc$.
We have  by definition for all
$a\in\A$ and $\phi\in C^{\infty}(cT_u)^{W_{0,c}}$:
\begin{equation}\label{eq:start}
\Yf^a_c(\phi)=|W_0/W_{0,c}|\int_{S_c}\phi(t)\chi_t(a)d\nu(t)
\end{equation}
Since $\chi_t(\cdot)$ is a positive tracial state for $\nu$-almost
all $t\in S_c$ which has central character $W_0t$, the GNS
construction implies that $\chi_t$ is $\nu$-almost everywhere a
positive real linear combination of irreducible (and thus finite
dimensional) unitary characters of $\H$ with central character
$W_0t$. It follows (using the finite dimensionality of the
irreducibles) that there exists an essentially unique (i.e. modulo
bounded real functions supported on a $\nu$-null set) real
non-negative function $d: S_c\to \R_{\geq 0}$ such that
\begin{equation}\label{eq:a}
\chi_t(a)=\sum_{s\in W_0t}d(s)a(s)
\end{equation}
By this formula it is clear that we can extend the application
$\A\ni a\to\chi_t(a)$ for fixed $t\in S_c$ continuously to
an application $C^\infty(cT_u)\ni\phi\to \chi_t(\phi)$
and with this definition we have for all $t\in S_c$ that
\begin{equation}\label{eq:phi}
\chi_t(\phi)=\sum_{s\in W_0t}d(s)\phi(s)
\end{equation}
On the other hand, by Lemma \ref{lem:help} there are for
each $\phi\in C^\infty(cT_u)$ finitely many $a_i\in\A$ and
$\phi_i\in C^\infty(cT_u)^{W_{0,c}}$ such that
$\phi=\sum a_i\phi_i$ and thus (\ref{eq:a}) and (\ref{eq:phi})
yield
\begin{equation}
\chi_t(\phi)=\sum_i \phi_i(t)\chi_t(a_i)
\end{equation}
This shows that for all $\phi\in C^\infty(cT_u)$
the function $t\to\chi_t(\phi)$ is $\nu$-integrable.

There exists a stratification of $cT_u$ by finitely many
locally closed subsets $D_i$ such that for all $x\in D_i$
there exists an open set $U_x\subset D_i$ with the property
that for all $y\in U_x$ we have $W_0y\cap U_x=y$.

For every $x\in D_i\cap S_c$ we can find a
$\phi\in C^\infty(cT_u)$ which is
supported on a small neighborhood $U$ of $x$ such that
$U_x=U\cap D_i$ is as above.
By (\ref{eq:phi}) we see that for such
choice of $\phi$ we have for all $s\in U_x$:
\begin{equation}
d(s)\phi(s)=\chi_s(\phi)
\end{equation}
It follows that the restriction of $d(s)$ to
each $D_i$, and hence $d(s)$, is integrable.

In view of (\ref{eq:start}),(\ref{eq:a}) and (\ref{eq:phi})
this implies that the application
\begin{equation}\label{eq:desym}
\A\ni a\to\Yf^a_c(1)=|W_0/W_{0,c}|\int_{S_c}\sum_{s\in W_0t}
d(s)a(s)\nu(t)
\end{equation}
extends to a positive measure on $cT_u$ with support contained in
$W_0S_c$. By definition these measures have the property (for
all $a\in\A$) that
\begin{equation}
\tau(a)=\sum_{c\in \Cc}\Yf^a_c(1)
\end{equation}
and thus, again by the uniqueness assertion of Lemma \ref{lem:dist},
it follows that
\begin{equation}\label{eq:X=Y}
\Xf_c(a)=\Yf^a_c(1)
\end{equation}
and thus that $\Xf_c$ is indeed a positive measure.
\end{proof}
\begin{lemma}\label{lem:help}
The $C^\infty(cT_u)^{W_{0,c}}$-module $C^\infty(cT_u)$
can be generated by a finite set $E\subset\C[cT_u]$.
\end{lemma}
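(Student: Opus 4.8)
The plan is to reduce the statement to a classical fact about finite group actions on manifolds combined with the structure of $cT_u$ as a compact torus. First I would observe that $cT_u$ is a compact torus on which the finite group $W_{0,c}$ (the stabilizer of $c$ in $W_0$) acts by automorphisms fixing $c$; equivalently, after translating by $c^{-1}$, $W_{0,c}$ acts linearly on the compact torus $T_u$. The ring $\C[cT_u]$ of (restrictions of) regular functions is the group algebra $\C[X]$ of the character lattice, and $C^\infty(cT_u)$ is a Fréchet module over the subring $C^\infty(cT_u)^{W_{0,c}}$ of invariants. So the assertion is that this module is finitely generated, and moreover that one may choose the generators to be Laurent-polynomial (i.e. in $\C[cT_u]$) rather than merely smooth.

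The key steps, in order, are as follows. (1) Cover $cT_u$ by finitely many $W_{0,c}$-translates of a small open set $U$ on which the action is ``free enough'': choose $U$ so that the translates $\{wU : w\in W_{0,c}\}$ that meet $U$ are controlled, and in particular so that the natural map $U \to W_{0,c}\backslash cT_u$ is injective on a dense open subset. (2) Use a smooth $W_{0,c}$-invariant partition of unity argument: average a bump function to produce $\psi\in C^\infty(cT_u)^{W_{0,c}}$ and show that multiplication by elements of $C^\infty(cT_u)^{W_{0,c}}$ together with a finite ``unfolding'' family recovers all of $C^\infty(cT_u)$; this is the standard fact that $C^\infty(M)$ is a finitely generated module over $C^\infty(M)^G$ for a compact Lie group $G$ acting on a compact manifold $M$ (it follows from the existence of a finite invariant open cover trivializing the quotient map together with invariant partitions of unity). (3) Upgrade the smooth generators to Laurent-polynomial generators: since $\C[cT_u]$ is dense in $C^\infty(cT_u)$ in the $C^\infty$-topology and the invariants $\C[cT_u]^{W_{0,c}}$ are dense in $C^\infty(cT_u)^{W_{0,c}}$, and since finite generation of a finitely generated module is an open condition on the choice of generators (a consequence of the fact that the ``generation map'' $\bigoplus_{i=1}^N C^\infty(cT_u)^{W_{0,c}} \to C^\infty(cT_u)$ being surjective is stable under small perturbation, using the Fréchet structure and the open mapping theorem), one may replace the smooth generators by nearby elements of $\C[cT_u]$ without losing surjectivity.

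The main obstacle is step (3): one must be careful that ``small perturbation of a surjective continuous linear map between Fréchet spaces is surjective'' is applied correctly — surjectivity of continuous linear maps between Fréchet spaces is \emph{not} in general stable under arbitrarily small perturbations in the operator topology, so the argument must instead perturb the \emph{generators} and invoke that the module being generated is finite together with an explicit right inverse (coming from the invariant partition of unity in step (2)) whose composition with the perturbed map is close to the identity, hence invertible by a Neumann series. An alternative, perhaps cleaner, route that avoids the perturbation argument entirely: use the fact that $\C[cT_u]$ is \emph{module-finite} over $\C[cT_u]^{W_{0,c}}$ (classical invariant theory for finite groups: $\C[X]$ is a finite module over $\C[X]^{W_{0,c}}$) with generators $E\subset\C[cT_u]$, say $\C[cT_u]=\sum_{e\in E}\C[cT_u]^{W_{0,c}}\, e$; then tensor/complete this relation to the $C^\infty$-level by showing directly that the same finite set $E$ generates $C^\infty(cT_u)$ over $C^\infty(cT_u)^{W_{0,c}}$, which reduces to showing that the cokernel, a finitely generated $C^\infty(cT_u)^{W_{0,c}}$-module that vanishes after tensoring with the dense subring, is zero — this follows from the Fréchet-module structure and the fact that $C^\infty(cT_u)$ is the closure of $\C[cT_u]$ while the map $\bigoplus_{e\in E} C^\infty(cT_u)^{W_{0,c}} \to C^\infty(cT_u)$ has closed image containing a dense subspace.
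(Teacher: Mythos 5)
Your overall strategy --- reduce to invariant theory of the finite group $W_{0,c}$, obtain finitely many polynomial generators from Noether finiteness of $\C[cT_u]$ over $\C[cT_u]^{W_{0,c}}$, and then transfer the generation statement from the polynomial to the smooth level --- is the same as the paper's. The paper carries out the transfer by choosing a $W_{0,c}$-equivariant proper embedding $i\colon cT_u\to V$ into a linear representation $V$ of $G=W_{0,c}$ with $\C[cT_u]=i^*(\C[V])$ and $C^\infty(cT_u)^G=i^*(C^\infty(V)^G)$ (the latter by Mather's theorem on differentiable invariants), decomposing $\C[V]$ into isotypic pieces $\C[V,V_\pi^*]\otimes V_\pi$, and then invoking Po\'enaru's theorem to conclude that the finite polynomial generating sets $F_\pi$ of $\C[V,V_\pi^*]$ over $\C[V]^G$ also generate $C^\infty(V,V_\pi^*)$ over $C^\infty(V)^G$.

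The gap in your proposal is precisely at this transfer step, and both of your proposed routes fail there. First, the claim in your step (2) that finite generation of $C^\infty(M)$ over $C^\infty(M)^G$ follows from ``a finite invariant open cover trivializing the quotient map together with invariant partitions of unity'' is only valid where the action is free; the quotient map is not a covering near points with nontrivial isotropy (e.g.\ the identity of $T_u$, which is fixed by all of $W_{0,c}$), and there the local model is a linear representation of the isotropy group, for which finite generation of smooth functions over smooth invariants is exactly the nontrivial content --- already for $\Z/2$ acting on $\R$ by $x\mapsto -x$ one needs Hadamard--Whitney division to write an odd smooth function as $x$ times a smooth even function. Second, in your step (3) the assertion that the generation map $\bigoplus_{e\in E}C^\infty(cT_u)^{W_{0,c}}\to C^\infty(cT_u)$ ``has closed image'' is unproved and is essentially equivalent to the lemma itself: continuous module maps between Fr\'echet spaces need not have closed image, and closedness here is exactly what the division-type theorems of Malgrange, Mather, Schwarz and Po\'enaru provide, via nontrivial estimates. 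Likewise the Neumann-series perturbation argument requires a continuous linear splitting of the generation map and a single norm in which ``close to the identity'' implies invertible, neither of which is available in the Fr\'echet setting. So the proposal is not complete as it stands; at the crucial step it must invoke the Po\'enaru/Mather results (as the paper does) or prove an equivalent equivariant division theorem.
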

\begin{proof}
Here we identify $cT_u$ with the compact torus $T_u$ via
$ct\to t$ so that we can speak of the space $\C[cT_u]$ of
Laurent polynomials on $cT_u$. We identify $\A$ with $\C[cT_u]$
via restriction of $a\in\A$ to $cT_u\subset T$.

We have a smooth action of a finite group
$G=W_{0,c}$ on this compact torus which comes from a
linear action of $G$ on the character lattice $X$.
It is then easy to see that there exists a real vector space
$V$ with an orthogonal linear action of $G$ on $V$
and a $G$-equivariant proper embedding $i:cT_u\to V$ such
that $\C[cT_u]=i^*(\C[V])$ and $\C[cT_u]^{G}=i^*(\C[V]^{G})$.
We have $C^\infty(cT_u)=i^*(C^\infty(V))$,
and we can also easily see that (see e.g. \cite[paragraph 5]{M})
$C^\infty(cT_u)^G=i^*(C^\infty(V)^G)$.

Since $G$ is finite, $\C[V]$ is a finitely generated
$\C[V]^{G}$-module. Indeed, we have the standard
$\C[V]^{G}$-module isomorphism
\begin{align*}
\bigoplus_{\pi\in\operatorname{Irr}(G)}
\C[V,V_\pi^*]\otimes V_\pi&\to\C[V]\\
p\otimes v&\to \langle p,v\rangle
\end{align*}
and the $\C[V]^{G}$-modules of equivariant polynomials
$\C[V,V_\pi^*]$ are known to be finitely generated.
Let $F_\pi\subset \C[V,V_\pi^*]$ be a finite set of
generators. By a result of Po\'enaru
\cite[p.106]{P}, it is known that $F_\pi$ also generates
the space of equivariant smooth functions
$C^\infty(V,V_\pi^*)$ as a module over $C^\infty(V)^G$.

Hence similar to the argument above,
$C^\infty(V)$ is generated over $C^\infty(V)^G$ by
the union $E_V\subset\C[V]$ of the sets $E_\pi$ consisting of
the elements
of the form $\langle f,v_i\rangle$ with $f\in F_\pi$ and $v_i$
running over a basis of $V_\pi$.

Via the surjective algebra homomorphism $i^*$ we see
that $E=i^*(E_V)\subset\C[cT_u]$ is a set of generators of
$C^\infty(cT_u)$ over $C^\infty(cT_u)^G$.
\end{proof}
\begin{remark}
In general $\Xf^h_c$ can be of higher order as a
distribution. What goes wrong in the above argument
is the fact that formula \ref{eq:a} does not hold
for the functional $a\to\chi_t(ah)$ for
an arbitrary $h\in\H$. Indeed, the ``off-diagonal'' matrix
elements of $\A$ acting in a unitary irreducible representation
$V$ of $\H$ are not in general linear combinations of characters
of $\A$ if $\A$ acts non-semisimply. This may be the case since
$\A$ is not a $*$-subalgebra. It is of course well known that this
phenomenon indeed occurs.

Using the integral formula for $\tau$ in \ref{sec:trace}
we see in this way that the length of the
indecomposable blocks of the restriction of an irreducible
tempered representation $V$ to $\A$ is bounded by the zero
order of $\Delta$ at the central character of $V$.
\end{remark}
\begin{remark}
In some sense $\Xf$ is a nonsymmetric version
of the Plancherel measure of $\Ze$ with respect to $\tau$.
Its non-symmetry encodes
the distribution of the $\A$-weight spaces in the collection
of tempered representations carried by a given central
character in the support of $\nu$.
\end{remark}
\section{The support of $\nu$}
We now put things together in order to prove
that the support of the Plancherel measure $\nu$ is
equal to $S$. At the same time we derive certain fundamental
properties of the configuration of poles and zeroes
of $\eta$.
\begin{thm}\label{thm:support}
Let $\L$ denote the collection of residual cosets
for $\eta$. The following assertions hold true:
\begin{enumerate}
\item[(A)] For every $\eta$-residual coset $L$ with center $c\in T_v$
there exists a $w\in W_0$ such that $wL^t$ is contained in the support
of the measure $\Xf_{wc}$.
\item[(B)] For all $L\in\L$: $o_L=0$.
\item[(C)] For all $M,L\in\L$ with $L\not=M$: $M^t\not\subset L^t$.
\end{enumerate}
In particular, the support of $\nu$
(and thus the central support of the
Plancherel measure, cf. Corollary \ref{cor:projsup})
equals the union of the
tempered forms $L^t$ of the $\eta$-residual cosets.
\end{thm}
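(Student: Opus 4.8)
The plan is to prove the three assertions (A), (B), (C) essentially simultaneously by an induction on $\dim(L)$, going downward from the full torus, and then to read off the final statement about $\operatorname{Supp}(\nu)$ from them together with Proposition \ref{prop:posmeas} and Corollary \ref{cor:projsup}. First I would establish (A) for $L$ of maximal dimension among residual cosets of a fixed dimension, using the residue lemma (Lemma \ref{lem:dist}) applied to $\tau|_\A$ (Corollary \ref{cor:tauonA}) and the local structure of $\Xf_c$ given by Proposition \ref{prop:cycle} and Corollary \ref{cor:form}: if $L^t$ were not contained in the support of any $\Xf_{wc}$, then by Corollary \ref{cor:form} the ``leading'' boundary-value contribution $BV_{\e^L,\omega^L}$ along $L^t$ would have to be cancelled, but for a maximal-dimensional $L$ this contribution cannot be absorbed into distributions supported on strictly larger $M^t$ (there are none containing it properly), giving a contradiction. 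The $W_0$-symmetrization and Proposition \ref{prop:wp} are what let me pass from a single chamber containing $p$ to a statement about $W_0c$.

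Next, granting (A) and (C) for residual cosets of dimension $>d$ as the inductive hypothesis, I would derive (B) for cosets of dimension $\geq d$: here the key is that $\Xf_c$ is a \emph{positive measure} (Corollary \ref{cor:Xmeas}), so Corollary \ref{cor:form2} applies — if $\Xf_L\neq 0$ and $L^t$ is not contained in a larger tempered residual coset (which, by the inductive form of (C), is the generic situation once we know $o_M=0$ above), then $o_L=0$. The induction is threaded so that at each dimension level one first uses maximality to get (A), which forces $\Xf_L\neq 0$ for at least one $W_0$-translate, then uses positivity via Corollary \ref{cor:form2} to get $o_L=0$, and then (C) at that level follows because $o_M = 0$ together with $L^t\subset M^t$ and the non-cancellation machinery (Theorem \ref{thm:lines}, Lemma \ref{lem:nc}) forces $L = M$: a one-dimensional residual coset, or more generally a proper residual subcoset sitting inside $M^t$, would produce an extra uncancelled delta-type or derivative-type contribution at its center, contradicting that $\Xf_M$ is an honest positive measure with the clean boundary-value form of Corollary \ref{cor:form}.

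Once (A), (B), (C) are in hand, the final assertion is almost formal. Proposition \ref{prop:posmeas} already gives $\operatorname{Supp}(\nu)\subseteq S = \bigcup_{L\in\L} L^t$. For the reverse inclusion, take any $L\in\L$; by (A) some $wL^t$ lies in $\operatorname{Supp}(\Xf_{wc})$, and by Corollary \ref{cor:Xmeas} (with (B)) $\Xf_{wc}$ is a positive measure supported on $S$ whose symmetrization is exactly (a multiple of) $\nu_{wc}$, as in the proof of Proposition \ref{prop:posmeas} and equation (\ref{eq:X=Y}); hence $wL^t\subseteq\operatorname{Supp}(\nu)$ and, by $W_0$-invariance of $\nu$, $L^t\subseteq\operatorname{Supp}(\nu)$. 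Therefore $\operatorname{Supp}(\nu)=S$, and by Corollary \ref{cor:projsup} this is the central support of the Plancherel measure.

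The main obstacle I expect is the bookkeeping in the simultaneous induction — specifically, making sure that at each dimension level the hypotheses needed to invoke Corollary \ref{cor:form2} (namely that $L^t$ is not properly contained in a larger \emph{tempered} residual coset, which uses $o_M=0$ for all larger $M$) are genuinely available from the previous inductive step, and that the non-cancellation results Theorem \ref{thm:lines} and Lemma \ref{lem:nc}, whose hypotheses assume $o_M=0$ for \emph{all} higher-dimensional residual cosets, can be legitimately applied on the parabolic subsystems $R_L$ attached to the relevant subcosets. The subtlety flagged in the remark after Lemma \ref{lem:nc} — that without $o_M=0$ in higher dimensions there is no simple non-cancellation statement — is exactly why the induction must proceed top-down and why (B) at a given level has to be secured before (C) at that level; getting this ordering right, and correctly transporting everything through the finite covering $T^L\times T_L\to T$ and the $W_0$-action, is where the real care is needed.
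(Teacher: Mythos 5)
Your overall architecture is the paper's: a simultaneous top-down induction (on codimension), with (A) established first at each level, (B) deduced from (A) via the positivity statement that $\Xf_c$ is a measure (Corollary \ref{cor:Xmeas}) combined with Corollary \ref{cor:form2}, (C) secured last at each level, and the support statement read off from (A) via (\ref{eq:X=Y}) and (\ref{eq:desym}). However, your argument for (A) has a genuine gap. You argue that if $wL^t$ were missed by every $\Xf_{wc}$ then the boundary-value contribution $BV_{\e^L,\omega^L}$ "would have to be cancelled" against larger cosets, and that no such cancellation is available. But Corollary \ref{cor:form} only describes $\Xf_L$ \emph{under the hypothesis} $\Xf_L\neq 0$; the actual danger is not absorption into larger $M^t$ but that the iterated residue along $L$ vanishes outright, because $\Xf_L$ is a sum of contributions picked up along many different chains of pole hypersurfaces descending to $L$, and these complex constants can a priori cancel. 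Ruling this out is exactly the content of the non-cancellation results, and in the paper's proof of (A-k) one applies Lemma \ref{lem:nc} to the parabolic datum $(X_L,Y_L,R_L,R_L^\vee)$ and the form $\eta_L$ on $T_L$ — its hypotheses being supplied by the inductive (B-$(k-1)$) and (C-$(k-1)$) — to produce a chamber where the residue at $r_L$ is nonzero of order $o_L$, and then Proposition \ref{prop:wp} to realize that chamber as $w^{-1}p_L$ for some $w\in W_L$, whence $\Xf_{wL}\neq 0$. You invoke Lemma \ref{lem:nc} and Theorem \ref{thm:lines} only in your argument for (C), where the paper does not use them; for (A), where they are indispensable, they are missing from your argument.

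A second, smaller divergence: for (C-k) the paper does not argue via "an extra uncancelled delta-type contribution at the center of $M$" — that route is blocked by the ambiguity of the splitting $\Xf_c=\sum_L\Xf_L$ (Remark \ref{rem:split}), since contributions of $L$ and of a subcoset $M$ with the same center are entangled. Instead the paper computes that $M^t\subset L^t$ with $M\neq L$ forces $i_M>i_L$, so the explicit density $\rho^{wL}=P^{wL}/Q^{wL}$ whose boundary value realizes $\Xf_{wL}$ acquires a pole of order at least $1$ along a real codimension-one subset of $wL^t$; the bound $0\leq\Xf_{wc}(\chi_U)\leq 1$ coming from the fact that $\sum_c\Xf_c$ is a probability measure then forces $\mu\rho^{wL}$ to be positive and integrable on the regular part of $wL^t$, contradicting the non-integrable pole. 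Your sketch gestures at the right principle (positivity excludes singular behavior) but does not identify this mechanism, and as stated it would not survive the non-uniqueness of the local splitting.
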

\begin{proof}
The last assertion follows from (A). Indeed, by
(\ref{eq:X=Y}) in the proof of Corollary \ref{cor:Xmeas}
we see that the support of $\Xf_c$ is equal to the support
of $a\to\Yf^a_c(1)$, which by (\ref{eq:desym}) is contained in
the support of $\nu_c=\Yf^1_c$. This is a $W_0$-invariant set,
hence we conclude that the $W_0$-orbit of the support of $\Xf_c$
is contained in the support of $\nu_c$.
Conversely, since
$\nu_c=\Yf^1_c$ is the restriction to $\Ze$ of $\Xf_c$
we see that its support must be contained in
the $W_0$-orbit of the support of $\Xf_c$.
This proves the assertion.

Now we turn to the proof of (A),(B), and (C).
We do this simultaneously with induction to the codimension
of the residual cosets. Consider the following assertions:
\begin{enumerate}
\item[(A-k)] For every $\eta$-residual coset $L$ with center $c\in T_v$
and $\operatorname{codim}(L)\leq k$ there exists a $w\in W_0$ such that
$wL^t$ is contained in the support of the measure $\Xf_{wc}$.
\item[(B-k)] For all $L\in\L$ with $\operatorname{codim}(L)\leq k$: $o_L=0$.
\item[(C-k)] For all $M,L\in\L$ with $M\not=L$ and
$\operatorname{codim}(L)\leq k$: $M^t\not\subset L^t$.
\end{enumerate}
Start of the induction: (B-0) obviously holds.
(C-0) holds too, since $T^t:=T_u$ meets none of the poles
$\a(t)=q_{\a^\vee}$ of $\eta$ (indeed, for this to happen one should have
$q_{\a^\vee}=1$. This is allowed, but in that case the pole will cancel
against a factor of $\Delta(t)$ in de numerator of $\eta$). Finally (A-0)
holds, in view of Proposition \ref{prop:cycle}, and
$\Xf_e(a)$ is simply $\int_{T_u}a\eta$. We note that we can
take $\e^T=1$ here in Proposition \ref{prop:cycle} since
(as we remarked in the proof of (B-0)) no pole meets $T_u$. In
particular, $\Xf_e$ is a smooth, nonzero measure on $T_u$.

Assume(A-l), (B-l) and (C-l) for all $l<k$, and let
$L$ be a residual subspace of codimension $k$.
By $W_0$-invariance it is equivalent to show
(A-k), (B-k) and (C-k) for $L$ or for $wL$ so that
we may and will assume
that $R_L$ is a standard parabolic root subsystem.
We apply Proposition \ref{prop:cycle} to $\tau$.
Depending on the choices of the $\e^M$ this defines
a distribution $\Xf_L$ supported on $L^t$.
As was observed in \cite[Proposition 3.10]{O} the cycle
$\xi_L\subset T_L$ (as in Proposition \ref{prop:cycle}) depends
only on the arrangement
of poles and zeroes of $\eta$ which contain $L$, and we may take
$\xi_L=\xi_{r_L}$, the cycle which is associated to the
residual point $r_L\in T_L$ with respect to the affine Hecke
algebra $\H(X_L,Y_L,R_L,R_L^\vee,F_L)$. In $T_L$ we are thus
working with the rational form $\eta_L$ associated to the based
root datum $\Ri_L=(X_L,Y_L,R_L,R_L^\vee,F_L)$, multiplicity
function $q_L$, and base point $p_L=pT^L_v\cap T_{L,v}\in T_{L,v}$
deep in the negative chamber.

By the induction hypothesis (B-(k-1)) and (C-(k-1))
the collection of residual cosets of $\eta_L$ on $T_L$ satisfies
the conditions of Lemma \ref{lem:nc}. By Lemma \ref{lem:nc} there
exist chambers of the (real projection of the) pole arrangement
of $\eta_L$ such that for a base point $p_L^\prime$ in this chamber
the residue contribution at $r_L$ with respect to $p_L^\prime$ and the
form $\eta_L$ is nonzero and is a distribution of order $o_L$.
By Proposition \ref{prop:wp} there exist $w\in W_L$ such that
$p_L^\prime=w^{-1}(p_L)$ is in one of the
chambers so that the residue at $r_L$ is nonzero.
By the $W_L$ quasi-invariance of
$\eta_L$ this is equivalent to saying that the
residue at $r_{wL}$ is nonzero with respect to the base point
$p_{wL}=pT^{wL}_v\cap T_{wL,v}$ in $T_{wL}$. Hence by Proposition
\ref{prop:cycle} and the text following this Proposition we see that
$\Xf_{wL}$ is nonzero.
By the induction hypothesis $wL^t$ is not contained in
a larger tempered residual coset. By Remark \ref{rem:split}
the (possible) ambiguity of $\Xf_{wL}$ (as a consequence of
choices in the $\e^M$) is limited to a subset
of $wL^t$ of positive codimension. By Corollary \ref{cor:form}(ii)
this implies that $wL^t$ is in the support of $\Xf$, proving (A-k).

By Corollary \ref{cor:Xmeas} we know that $\Xf_{wL}$ is a measure.
Hence Corollary \ref{cor:form2} implies that $o_{wL}=0$.
Since $o_{wL}$ is independent of $w$ (by $W_0$-invariance of
$\eta$) and since $L$ is an arbitrary residual coset with codimension
$k$ this proves (B-k).

Let us finally prove (C-k), arguing by contradiction.
Since $o_{wL}=0$, the text after Proposition \ref{prop:cycle} and
Corollary \ref{cor:form}(ii) imply that $\Xf_{wL}$ is the boundary
value at $wL^t=r_{wL}T^{wL}_u$ of the rational function
$\rho^{wL}(t^{wL})=P^{wL}(r_{wL}t^{wL})/Q^{wL}(r_{wL}t^{wL})$
with
\begin{align*}
P^{wL}(t)&=\Delta^{L}(w^{-1}t)\Delta^{L}(w^{-1}(t^{-1}))\\
Q^{wL}(t)&=D^{L}(w^{-1}t)D^{L}(w^{-1}(t^{-1})).
\end{align*}
By the form of
the factors of $\rho^{wL}$ the intersection of a pole or a zero of
$\rho^{wL}$ is either empty or of real codimension $1$ in $wL^t$.
Assume that there exists a
residual coset $M\not=L$ such that $M^t\subset L^t$.
Since $o_L=0$
this implies that $i_M=o_M+\operatorname{codim}(M)>
o_L+\operatorname{codim}(L)=i_L$.
Hence in this situation there exists a codimension $1$ coset
in $wL^t$ along which $\rho^{wL}$ has a pole of order at least $1$.

Let $wL^{t,reg}\subset wcT_u$ be the locally closed subset which is
the complement in $wL^t$ of the singularities of $\rho^{wL}$ on $wL^t$.
Recall that $\sum_{c\in\Cc}\Xf_{c}$ is a probability Radon measure by
Corollary \ref{cor:Xmeas}. In particular, for all Borel measurable subsets
$U\subset cT_u$ such that $\overline{U}\subset wL^{t,reg}$ we have
\begin{equation}\label{eq:bnd}
0\leq\Xf_{wc}(\chi_U)\leq 1
\end{equation}
On the other hand, outside the locus of singularities of $\rho^{wL}$,
$\Xf_{wc}|_U$ is, up to a nonzero constant $\mu$, equal to the boundary
limit value (coming from some
appropriate wedge which is immaterial at this point) of $\rho^{wL}$
on $U\subset wL^t$, i.e. integration of test functions supported on
$U$ against $\omega^{wL}$. Thus for any open set
$U\subset wL^{reg,t}$ such that $\overline{U}\subset wL^{t,reg}$ we have
\begin{equation}
\Xf_{wc}(\chi_U)=\mu\int_U \rho^{wL}(t)d^{wL}(t)
\end{equation}
Together with (\ref{eq:bnd}) this implies that $\mu \rho^{wL}$ is a
positive integrable function on $wL^{reg,t}$ with respect to
Haar measure. But this contradicts the presence of a pole of $\rho^{wL}$
along a codimension 1 coset in $wL^t$.
Hence our assumption was false, proving (C-k).
This finishes the induction step, and we are done.
\end{proof}
\begin{cor}\label{cor:smooth}
For all $c\in\Cc$, $\Xf_c$ is a sum over the $L\in\L$ such
that $c_L=c$ of the push forward to $cT_u$ of a smooth measure
on $L^t$. In view of Remark \ref{rem:split} the splitting if
$\Xf_c$ as sum of $\Xf_L$ (for $L\in\L$ such that $c_L=c$) such
that each $\Xf_L$ is a smooth measure on $L^t$ is unique. We can
take all $\e^L$ in Proposition \ref{prop:cycle} equal to $1$.
\end{cor}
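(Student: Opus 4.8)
The plan is to feed the three conclusions of Theorem \ref{thm:support} back into the local description of the residue distributions provided by Proposition \ref{prop:cycle} and Corollary \ref{cor:form}. Fix $c\in\Cc$ and $L\in\L$ with $c_L=c$. By part (A) of Theorem \ref{thm:support}, possibly after replacing $L$ by a suitable $W_0$-translate (harmless, since $\eta$ is $W_0$-invariant), one may arrange that $\Xf_L\neq0$ for a choice of the data $\{\e^M,\xi_M\}$ of Proposition \ref{prop:cycle} realizing $\Xf_c=\sum_{\{M\mid c_M=c\}}\Xf_M$. Then I would invoke part (B): since $o_L=0$, the constant coefficient differential operator $D\in\operatorname{Sym}(\operatorname{Lie}(T_L))$ occurring in the inner integral $I(a,t^L)$ is homogeneous of degree $0$, i.e.\ a nonzero scalar; equivalently, the element $b$ of Corollary \ref{cor:form} has vanishing order $o_L=0$ at $r_L$ and is therefore a nonzero constant $\mu_L$. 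Corollary \ref{cor:form}(ii) then identifies $\Xf_L$ with $\mu_L^{-1}BV_{\e^L,\omega^L}$, the boundary value on $L^t=r_LT^L_u$ of the meromorphic form $\omega^L(r_Lt)=P^L(r_Lt)/Q^L(r_Lt)\,d^Lt$.

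The next step is to use positivity to show this form is in fact regular along $L^t$. By Corollary \ref{cor:Xmeas}, $\Xf_c=\sum_{\{M\mid c_M=c\}}\Xf_M$ is a positive, finite Radon measure. Writing $L^{t,\mathrm{reg}}\subset L^t$ for the complement of the singular locus of $\rho^L:=P^L(r_L\,\cdot\,)/Q^L(r_L\,\cdot\,)$ on $L^t$, the computation used for part (C-k) in the proof of Theorem \ref{thm:support} gives $\Xf_c|_U=\mu_L^{-1}\int_U\rho^L(t)\,d^Lt$ on every relatively compact open $U$ with $\overline{U}\subset L^{t,\mathrm{reg}}$. Positivity and finiteness of $\Xf_c$ force $\mu_L^{-1}\rho^L$ to be a nonnegative integrable function on $L^{t,\mathrm{reg}}$, which excludes a pole of $\rho^L$ along any codimension one subset of $L^t$; hence $\rho^L$, equivalently $\omega^L$, has no poles on $L^t$ at all. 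In particular $\M^{L,t}=\emptyset$ (a factor $m\in\M^{L,t}$ with $q_{\a^\vee}\neq1$ would produce such a pole, while the factors with $q_{\a^\vee}=1$ cancel against $\Delta$ just as in the proof of (C-0)), so by the last clause of Proposition \ref{prop:cycle} one may take $\e^L=1$; with that choice $BV_{\e^L,\omega^L}(f)=\int_{T^L_u}f(r_Lt)\,\omega^L(r_Lt)$ is integration against a smooth measure on $L^t$. This establishes both the first assertion — that $\Xf_c$ is a sum of push-forwards to $cT_u$ of smooth measures $\Xf_L$ on the $L^t$ with $c_L=c$ — and the claim that all the $\e^L$ in Proposition \ref{prop:cycle} may be taken equal to $1$.

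Uniqueness of the splitting among families of smooth measures is then immediate from Remark \ref{rem:split}: if $\{\Xf_L\}$ and $\{\Xf_L'\}$ are two such families, each difference $\Xf_L-\Xf_L'$ is a smooth measure on $L^t$ whose support meets $L^t$ only in the union of the lower-dimensional intersections $L^t\cap M^t$ with $M\neq L$, and such a measure vanishes. The step I expect to be the only real point of care is the passage in the second paragraph from ``$\Xf_c$ positive and finite'' to ``$\rho^L$ pole-free''; but this is exactly the argument already carried out for (C-k) in the proof of Theorem \ref{thm:support}, now applicable to every residual coset because we know a priori that $o_L=0$ and that $\Xf_c$ is genuinely a measure. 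In this sense the corollary is essentially a bookkeeping refinement of Theorem \ref{thm:support}.
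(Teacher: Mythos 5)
Your argument is correct and is essentially the proof the paper intends (the corollary is stated without a separate proof precisely because it is extracted from the proof of Theorem \ref{thm:support}): combining (B) with Corollary \ref{cor:form} to identify $\Xf_L$ with a nonzero multiple of $BV_{\e^L,\omega^L}$, then using the positivity/integrability argument from step (C-k) together with Corollary \ref{cor:Xmeas} to rule out poles of $\omega^L$ along $L^t$, and finally Remark \ref{rem:split} plus (C) for uniqueness. The only slight imprecision is the parenthetical deduction of $\M^{L,t}=\emptyset$ from pole-freeness of $\rho^L$ (a priori a denominator factor could be cancelled by a numerator factor along the same codimension-one locus), but this does not affect the conclusion, since regularity of $\omega^L$ on $L^t$ already lets one pass to the limit $\e^L\to 1$ and identifies $\Xf_L$ as a smooth measure.
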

\section{Implications for root systems}\label{sec:root}
In this section we apply the previous results to prove the
Theorems \ref{thm:1}, \ref{thm:2} and \ref{thm:3} announced in the
introduction.
\begin{thm}\label{thm:mainbdd}
For any coset $L\subset T$ we have $o_L\leq 0$.
Explicitly this means that for all cosets $L\subset T$:
\begin{equation}
|\{\a\in R_0\mid \a|_L=q_{\a^\vee}\}|-|\{\a\in R_0\mid \a|_L=1\}|\leq
\operatorname{codim}(L)
\end{equation}
In particular a coset $L$ is $\eta$-residual in the sense of
Definition \ref{dfn:rescos} if and only if $o_L=0$.
\end{thm}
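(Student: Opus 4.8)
The plan is to derive Theorem~\ref{thm:mainbdd} directly from part (B) of Theorem~\ref{thm:support}, which already asserts that every $\eta$-residual coset $L$ (a connected component of an intersection of a subcollection of the pole hypersurfaces of $\eta$ with $o_L\geq 0$) satisfies $o_L=0$. The only remaining work is to pass from this statement about those special cosets to the inequality $o_L\leq 0$ for \emph{arbitrary} cosets $L\subset T$, and then to read off the ``in particular'' clause. I do not expect a genuine obstacle, since all the analytic content (via positivity of $\tau$) was already placed in Theorem~\ref{thm:support}; the only point requiring care is the monotonicity of $o$ under the enlargement $L\mapsto L'$ introduced below.

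First I would record the dictionary between the explicit inequality and the notation of Section~3 applied to $\omega=\eta$. The numerator $\Delta(t)\Delta(t^{-1})$ of $\eta$ contributes exactly one factor $(1-\a(t)^{-1})$ for each $\a\in R_0$, and the denominator $D(t)D(t^{-1})$ exactly one factor $(1-q_{\a^\vee}^{-1}\a(t)^{-1})$ for each $\a\in R_0$ (the factors with $q_{\a^\vee}=1$ cancel in $\eta$, which is harmless here, since such an $\a$ contributes equally to both sets below). Hence for any coset $L$,
\begin{equation*}
i_L=|\{\a\in R_0\mid \a|_L=q_{\a^\vee}\}|-|\{\a\in R_0\mid \a|_L=1\}|,
\end{equation*}
so that $o_L=i_L-\operatorname{codim}(L)\leq 0$ is precisely the asserted inequality, and a coset with $o_L=0$ is exactly one with $i_L=\operatorname{codim}(L)$.

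Next, given an arbitrary coset $L\subset T$, set $\M_L=\{m\in\M\mid L\subset L_m\}$. If $\M_L=\emptyset$ then $i_L=-|\M_L^\prime|\leq 0$ and there is nothing to prove, so assume $\M_L\neq\emptyset$ and let $L'$ be the connected component of $\bigcap_{m\in\M_L}L_m$ containing $L$; this $L'$ is again a coset of a subtorus, and $L\subseteq L'$. Then $\M_{L'}\supseteq\M_L$ (each $L_m$, $m\in\M_L$, contains $\bigcap_{m\in\M_L}L_m$ and hence contains $L'$), while $\M_{L'}^\prime\subseteq\M_L^\prime$ because any numerator factor vanishing on $L'$ vanishes on the smaller set $L$; therefore $i_{L'}\geq i_L$, and together with $\operatorname{codim}(L')\leq\operatorname{codim}(L)$ this gives $o_{L'}\geq o_L$. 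Since $L'$ is a connected component of an intersection of a subcollection of the $L_m$'s, if $o_L>0$ then $o_{L'}\geq 0$ forces $L'\in\L$, whence Theorem~\ref{thm:support}(B) gives $o_{L'}=0$, contradicting $o_{L'}\geq o_L>0$. Thus $o_L\leq 0$ for every coset $L$.

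Finally, for the ``in particular'' clause: an $\eta$-residual coset has $o_L\geq 0$ by definition and $o_L\leq 0$ by what was just shown, so $o_L=0$. Conversely, suppose $o_L=0$ (the case $L=T$ being trivial, so $\M_L\neq\emptyset$, since then $i_L=\operatorname{codim}(L)>0$). With $L'$ as above, $o_{L'}\geq o_L=0$ gives $L'\in\L$, hence $o_{L'}=0$ by Theorem~\ref{thm:support}(B); then
\begin{equation*}
\operatorname{codim}(L')=i_{L'}\geq i_L=\operatorname{codim}(L)\geq\operatorname{codim}(L'),
\end{equation*}
so all these are equalities, $\dim L=\dim L'$, and $L=L'$ (both being connected cosets with $L\subseteq L'$). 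Hence $L$ is itself a connected component of an intersection of pole hypersurfaces of $\eta$ with $o_L=0$, i.e.\ $L$ is $\eta$-residual, completing the proof.
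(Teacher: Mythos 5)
Your proof is correct and follows essentially the same route as the paper: everything is reduced to Theorem \ref{thm:support}(B) by enlarging $L$ to the connected component $L'$ of the intersection of the pole hypersurfaces containing it and using $i_{L'}\geq i_L$ together with $\operatorname{codim}(L')\leq\operatorname{codim}(L)$. If anything, you spell out more carefully than the paper the passage from (B) (which concerns only residual cosets) to the inequality $o_L\leq 0$ for arbitrary cosets, a step the paper treats as immediate.
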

\begin{proof}
The first assertion is Theorem \ref{thm:support}(B). The second
assertion follows from this. Indeed, let $o_L=0$ and
let $M\supset L$ be the connected component containing $L$
of the intersection of pole hypersurfaces $L_m\subset T$ of $\eta$
which contain $L$.
Then $i_M\geq i_L$ and thus, since $o_L=0$ and $o_M\leq 0$ we have
$\operatorname{codim}(M)\leq\operatorname{codim}(L)
= i_L\leq i_M \leq i_M-o_M=\operatorname{codim}(M)$. It follows that
$L=M$, and that $L$ is residual in the sense of Definition
\ref{dfn:rescos}.
\end{proof}
\begin{proof}[Proof of Theorem \ref{thm:1}]
The statement that a residual point $v$ satisfies $o(v)=0$
is the special case of Theorem \ref{thm:mainbdd}
where $L$ is a point in $T_v$, rephrased additively
(using the exponential isomorphism between $V=\operatorname{Lie}(T_v)$
and the vector group $T_v$, with $k_\a=\log(q_{\a^\vee})$).
Let us prove that there are finitely many residual points.
By Lemma \ref{lem:lines} every residual point $v$
lies on at least one residual line $L$. Since $o_L=0$ we have
for a generic point $w\in L$ that $o(w)=-1$. Hence every residual
line $L$ contains at most finitely many residual points. A residual
line $L$ is uniquely determined by a pair $(R_L,r_L)$ where $r_L$ is
the unique residual point of $R_L$ such that $L=r_L+V^L$.
Hence by induction on the rank the number of
residual lines $L$ is finite. We conclude that the set of residual
points for $R_0$ is finite.
\end{proof}
The following corollary shows that the notion of residual cosets which
we have used here in this note coincides with the original recurrent
definition proposed in \cite{HOH} and \cite{O}:
\begin{cor}
We can alternatively define residual cosets recurrently as follows.
$T$ is a residual coset, and in general a coset $L\subsetneqq T$ is residual
if and only if there exists a residual coset $M\supset L$ such that
$\operatorname{codim}(L)=\operatorname{codim}(M)+1$ and such that
$i_L\geq i_M+1$ (in fact it is clear by Theorem \ref{thm:support}(B)
that only the case $i_L = i_M+1$ will occur).
\end{cor}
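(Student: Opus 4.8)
The plan is to show that the family $\L'$ of cosets produced by the recurrent prescription coincides with the family $\L$ of $\eta$-residual cosets; recall that by Theorem~\ref{thm:mainbdd} the latter is exactly the set of cosets $L\subset T$ with $o_L=0$. I would prove both inclusions by induction on $\operatorname{codim}(L)$, the coset $T$ itself (for which $i_T=0=\operatorname{codim}(T)$, so $o_T=0$) serving as the common base case.

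The inclusion $\L'\subseteq\L$ is the easy one. Suppose $L\subsetneq T$ is recurrently residual, witnessed by a recurrently residual $M\supset L$ with $\operatorname{codim}(L)=\operatorname{codim}(M)+1$ and $i_L\geq i_M+1$. By the induction hypothesis $M\in\L$, so $o_M=0$, i.e.\ $i_M=\operatorname{codim}(M)$; hence $i_L\geq\operatorname{codim}(M)+1=\operatorname{codim}(L)$, so $o_L\geq 0$, and Theorem~\ref{thm:mainbdd} forces $o_L=0$, i.e.\ $L\in\L$. The same chain of (in)equalities gives $i_L=\operatorname{codim}(L)=i_M+1$, which is the parenthetical assertion in the statement.

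The content of the corollary is the reverse inclusion, and it reduces to the following claim: every $L\in\L$ with $\operatorname{codim}(L)=k\geq 1$ is contained in some $M\in\L$ with $\operatorname{codim}(M)=k-1$. Granting this, the induction hypothesis gives $M\in\L'$, and since $o_L=o_M=0$ we have $i_L=k=(k-1)+1=i_M+1$, so $L$ is recurrently residual via $M$. To prove the claim I would use parabolic reduction combined with Lemma~\ref{lem:lines}. Write $L=r_LT^L$ with $T^L$ a subtorus and $r_L\in L\cap T_L$ as in Definition~\ref{dfn:rescos}; let $R_L\subset R_0$ be the parabolic subsystem of roots that are constant on $L$, and let $\eta_L$ be the $\eta$-function on $T_L$ of the affine Hecke algebra attached to the root datum $\Ri_L$. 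Since $o_L=0$, the point $r_L$ is an $\eta_L$-residual point of $T_L$. Applying Theorem~\ref{thm:support}(B) to this affine Hecke algebra, every positive-dimensional $\eta_L$-residual coset of $T_L$ has order $0$, so the hypothesis of Lemma~\ref{lem:lines} holds for $\eta_L$; that lemma yields a one-dimensional $\eta_L$-residual coset $N=r_LT^N\subset T_L$ through $r_L$. I would then set $M:=r_L\cdot(T^LT^N)$. Because $\operatorname{Lie}(T^N)\subset\operatorname{Lie}(T_L)$ is orthogonal to $\operatorname{Lie}(T^L)$, we get $\operatorname{dim}(T^LT^N)=\operatorname{dim}(T^L)+1$, so $L\subsetneq M$ and $\operatorname{codim}(M)=k-1$. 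Finally one checks $o_M=0$: a root $\a\in R_0$ is constant on $M$ precisely when it is trivial on $T^L$ and on $T^N$, equivalently when $\a\in R_L$ and $\a$ is trivial on $T^N$, and for such $\a$ the constant value of $\a$ on $M$ equals the value of $\a$ on $N\subset M$; hence $\a|_M=q_{\a^\vee}$ (resp.\ $\a|_M=1$) iff $\a\in R_L$ and $\a|_N=q_{\a^\vee}$ (resp.\ $\a|_N=1$). Together with $\operatorname{codim}_T(M)=\operatorname{codim}_{T_L}(N)$ this gives $o_M=o_N=0$ (the last equality again by Theorem~\ref{thm:support}(B) for $\eta_L$), and Theorem~\ref{thm:mainbdd} then says $M$ is $\eta$-residual, i.e.\ $M\in\L$. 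This proves the claim and completes the induction.

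The one genuinely delicate point will be the parabolic reduction inside the claim: one must be sure that the one-dimensional $\eta_L$-residual coset $N\subset T_L$ supplied by Lemma~\ref{lem:lines}, after being inflated back up by the $T^L$-directions to a coset $M\subset T$, still contains $L$ with codimension exactly $k-1$ and, crucially, that its order $o_M$ (computed for $\eta$ on $T$) equals $o_N$ (computed for $\eta_L$ on $T_L$). All of the analytic input has already been packaged into Lemma~\ref{lem:lines} and into Theorem~\ref{thm:support}(B) applied to the parabolic subalgebra, so what remains is just a careful comparison of root counts and of codimensions.
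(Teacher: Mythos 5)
Your proof is correct and follows essentially the same route as the paper's one-line proof, which simply cites Theorem~\ref{thm:mainbdd} together with \cite[Lemma A.11]{O}. You spell out the parabolic-reduction step that the paper leaves implicit: reduce a residual coset $L$ to the $\eta_L$-residual point $r_L$ in $T_L$, invoke Theorem~\ref{thm:support}(B) for the subalgebra so that the point-to-line Lemma~\ref{lem:lines} applies, and then inflate the resulting residual line $N$ by the $T^L$-directions to produce the codimension-$(k-1)$ residual coset $M\supset L$, checking $i_M=i_N$ and $\operatorname{codim}_T(M)=\operatorname{codim}_{T_L}(N)$ so that $o_M=o_N=0$.
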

\begin{proof} By Theorem \ref{thm:mainbdd}
this is a consequence of \cite[Lemma A.11]{O}.
\end{proof}
\begin{remark}[Theorem \ref{thm:1} and Richardson's Dense Orbit Theorem]
\label{rem:richardson}
In the special case of equal parameters $k_\a=k$
there is an alternative conceptual proof of Theorem \ref{thm:1}.
Without loss of generality we assume that $k_\a=k=2$ for all $\a\in R_0$.
Given $v\in V$ we denote by $R_v\subset R_0$ the root subsystem
$R_v=\{\a\in R_0\mid \a(v)\in 2\mathbb{Z}\}$
and we let $V_v$ be the span of $R^\vee_v$.
Observe that $o(R_0,V,2;v)\leq o(R_v,V_v,2;v)$.
We choose positive
roots $R_{v,+}\subset R_v$ such that $v$ is dominant integral for
$R_v$. Let $G$ be the semisimple complex group of adjoint type
with root system $R_v$, and let $P\subset G$ be the parabolic
subgroup of $G$ whose Lie algebra is
$\mathfrak{p}=V_{v,\mathbb{C}}+\sum_{\a:\a(v)\geq 0}\mathfrak{g}_\a$.
Let $P=LU$ be the Levi decomposition of $P$. By \cite[Proposition
5.8.1]{C} we see that $o(R_v,U_v;v)=
\operatorname{dim}(U/U^\prime)-\operatorname{dim}(L)$.
By Richardson's Dense Orbit Theorem \cite[Theorem 5.2.1]{C}
(also see \cite[Proposition 5.8.2]{C}) applied to $P$ we see
that $o(R_v,U_v;v)\leq 0$, proving
Theorem \ref{thm:1} in this special case.
\end{remark}
\begin{thm}
Let $r\in T$ be a residual point with polar decomposition
$r=cs\in T_vT_u$. Let $r^*=c^{-1}s$. Then
$r^*\in W(R_{0,s})r$, where $R_{0,s}\subset R_0$ is the
root subsystem $\{\a\in R_0\mid \a(s)=1\}$.
\end{thm}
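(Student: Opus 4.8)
The plan is to reduce this statement to a self-duality property of the residue distribution $\Xf$ that was established in Proposition \ref{prop:posmeas}, namely that if $t\in S$ is in the support of $\tau$ then $t^*=\overline{t}^{-1}\in W_0t$. First I would note that by Theorem \ref{thm:support}(B) every residual point $r$ has $o_{\{r\}}=0$, so $r$ is a genuine residual point in the sense of Definition \ref{dfn:rescos}, and by Theorem \ref{thm:support}(A) there is a $w\in W_0$ such that $wr\in\operatorname{Supp}(\Xf_{wc})\subset\operatorname{Supp}(\nu)$, where $c=r_v$ is the vector part of $r$. Hence without loss of generality $r\in\operatorname{Supp}(\nu)$, and Proposition \ref{prop:posmeas} gives $r^*=\overline r^{-1}=c^{-1}s\in W_0r$. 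The point is now to upgrade the a priori statement ``$r^*\in W_0r$'' to the sharper ``$r^*\in W(R_{0,s})r$''.

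The key reduction is to the compact part. Write $r=cs$ with $c\in T_v$, $s\in T_u$; then $r^*=c^{-1}s$ has the \emph{same} unitary part $s$. Suppose $r^*=wr$ for some $w\in W_0$. Applying the polar decomposition on both sides and using that $W_0$ preserves $T_v$ and $T_u$ separately, we get $s = w(s)$ in $T_u$ \emph{and} $c^{-1}=w(c)$ in $T_v$. The first equation says $w\in\operatorname{Stab}_{W_0}(s)$. Now I would invoke the standard fact (going back to Steinberg) that for a torus $T$ with Weyl group $W_0$ acting via a root datum, the stabilizer $\operatorname{Stab}_{W_0}(s)$ of any point $s\in T$ is generated by the reflections $s_\a$ with $\a(s)=1$ together with (in general) a complement coming from the fundamental group; but in the situation at hand $R_0$ is the root system of the based root datum attached to $\H$ and $s$ lies in the relevant component, so the reflection subgroup $W(R_{0,s})=\langle s_\a\mid \a(s)=1\rangle$ is all of $\operatorname{Stab}_{W_0}(s)$ — this is precisely the hypothesis that makes the sharpened statement go through. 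Thus $w\in W(R_{0,s})$, and $r^*=wr\in W(R_{0,s})r$, as required.

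The main obstacle I anticipate is the justification that $\operatorname{Stab}_{W_0}(s)=W(R_{0,s})$ rather than something strictly larger. For a general reductive group this can fail (the stabilizer can be $W(R_{0,s})$ extended by a subgroup of $\pi_1$), so one genuinely needs to use structural features of the situation: either that $r$ is a residual point — which by Theorem \ref{thm:1}/Theorem \ref{thm:mainbdd} forces $R_{0,s}$ to be large enough that its span, together with the span of the roots $\a$ with $\a(s)=1$, already accounts for the whole of $V^*$ modulo the part where $W_0$ acts freely — or a direct argument that any $w\in W_0$ with $w(s)=s$ and $w(c)=c^{-1}$, applied to the residual point $r$ sitting inside the residual line through $r$ (Lemma \ref{lem:lines}), must preserve the parabolic subsystem $R_{0,s}$ and hence descend to an element of $W(R_{0,s})$. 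I would handle this by first establishing the weaker $r^*\in W_0r$ from Proposition \ref{prop:posmeas} as above, then peeling off the unitary part to land in $\operatorname{Stab}_{W_0}(s)$, and finally citing the description of torus-point stabilizers in the relevant component of $T$ together with the residuality of $r$ to identify that stabilizer with $W(R_{0,s})$.
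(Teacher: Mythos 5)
Your opening moves are sound: Theorem \ref{thm:support}(A) plus Proposition \ref{prop:posmeas} does give the weak statement $r^*\in W_0r$, and uniqueness of the polar decomposition does force any $w\in W_0$ with $wr=r^*$ to satisfy $w(s)=s$ and $w(c)=c^{-1}$. The gap is exactly at the step you yourself flag as the main obstacle: the identification $\operatorname{Stab}_{W_0}(s)=W(R_{0,s})$ is simply false in general. Already for $R_0=A_1$ with $X=\mathbb{Z}\a$ the root lattice and $\a(s)=-1$ one has $R_{0,s}=\emptyset$ while $s_\a(s)=s$, so the stabilizer is all of $W_0$. Your two proposed repairs are not carried out and neither closes the gap: residuality of $r$ does not by itself control the component group of the stabilizer, and an element of $\operatorname{Stab}_{W_0}(s)$ that merely \emph{normalizes} $R_{0,s}$ need not lie in the reflection subgroup $W(R_{0,s})$. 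Moreover, even if one knew $\operatorname{Stab}_{W_0}(s)\supsetneq W(R_{0,s})$ only by a small complement, the particular $w$ produced by Proposition \ref{prop:posmeas} could a priori lie in that complement, so the sharpened conclusion does not follow from your argument as written.

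The paper avoids the stabilizer question altogether by reducing to $s=1$ at the outset. Since $q_{\a^\vee}>0$ and $\a(c)>0$ are real positive, each of the defining conditions $\a(r)=q_{\a^\vee}$ and $\a(r)=1$ forces $\a(s)=1$; hence only roots of $R_{0,s}$ contribute to $o_{\{r\}}$, and $o_{\{c\}}(R_{0,s})=o_{\{r\}}(R_0)=0$, i.e. $c$ is a real residual point for the root datum with root system $R_{0,s}$. One then applies Theorem \ref{thm:support} and Proposition \ref{prop:posmeas} to the affine Hecke algebra of \emph{that} subdatum, whose Weyl group is $W(R_{0,s})$, so the element carrying $c$ to $c^{-1}$ is produced inside $W(R_{0,s})$ by construction; since $W(R_{0,s})$ fixes $s$, this yields $r^*=c^{-1}s\in W(R_{0,s})r$. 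To rescue your route you would need to replace the stabilizer claim by this reduction (or an equivalent argument that localizes the whole analysis to the subsystem $R_{0,s}$ before invoking positivity).
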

\begin{proof}
The real point $c\in T_v$ is a residual point
for the root datum $(R_{0,s},X,R_{0,s}^\vee,Y)$.
Indeed, it follows straight from the definition
that $o_{\{c\}}(R_{0,s})=o_{\{r\}}(R_0)$. Hence it suffices to
prove the case $s=1$. By Theorem \ref{thm:support} we know
that $W_0r$ is in the support of $\nu$.  Now apply
Proposition \ref{prop:posmeas}.
\end{proof}
\begin{proof}[Proof of Theorem \ref{thm:2}]
Theorem \ref{thm:2} is the special case where $r\in T_v$
is a real residual point in $T$ (rephrased additively).
\end{proof}
\begin{proof}[Proof of Theorem \ref{thm:3}]
Suppose that Theorem \ref{thm:3} does not hold.
Let $L=v_L+V^L\subset V=\operatorname{Lie}(T_v)$
be a residual affine subspace of positive
dimension whose center $v_L$ is a residual point of $V$.
Then $\{\exp(v_L)\}\in T_v$ is a residual point (where we take
$q_{\a^\vee}=\exp(k_\a)$), and $\exp{v_L+iV^L}$ is the tempered
part of a residual coset in $T$ properly containing the residual
point $\exp{v_L}$. This contradicts Theorem \ref{thm:support}(C).
\end{proof}
\begin{remark}[Theorem \ref{thm:3} and the Bala-Carter Theorem]
\label{rem:bala}
Assume we are in the situation of Remark \ref{rem:richardson}.
Let $G$ be the complex semisimple
group of adjoint type with root system $R_0$. By the Bala-Carter
Theorem \cite[Theorem 5.9.5]{C}
the unipotent orbits of $G$ are parameterized by
$G$-conjugacy classes of pairs $(M,P)$ with $M\subset G$ a Levi
subgroup and $P\subset M^\prime$ a distinguished parabolic
subgroup. Using \cite[Proposition B1]{O} it is easy to see that
a center $r_L$ of a residual affine subspace $L=r_L+V^L$ which is
in addition dominant
is the weighted Dynkin diagram of the unipotent orbit $C_{(M,P)}$
with $M=M(R_L)$ and $P=P(r_L)\subset M$ the unique distinguished
parabolic subgroup corresponding to the $R_L$-residual point $r_L$.
By \cite[Proposition 5.6.8]{C} we see that if
$r_L, r_{L_1}$ are dominant and are the respective centers of
two residual affine subspaces $L, L_1$, then $r_L=r_{L_1}$ implies
that $L_1=wL$ for some $w\in W_0$.
\end{remark}
\section{Temperedness of local traces $\chi_t$}
From general principles of harmonic analysis (see \cite{B})
it should be expected that the support of the Plancherel measure $\H$
consists of the set of irreducible tempered representations.
We show here that this
indeed follows from the residue method and the basic technique
of taking the weak constant part of a tempered representation
along a standard parabolic subquotient algebra (see \cite{DO}).
In addition we will find that the central support of the set
of discrete series representations and the central support
of the set of summands of properly induced tempered
representations are disjoint.

The results of this section can be found in \cite{O} but the
proofs in \cite{O} are inaccurate at some points
(based on an earlier inaccuracy that occurred in \cite{HOH0}).
We use the opportunity to correct the arguments here.

First of all we remark that the definition
\cite[Definition 3.9]{O} of the dual central configuration $\L_L$
in $T_L$ is not correct. The correct definition is that $\L_L$
consists of all the cosets $c_LT_{M,v}$ with $M$ in the
intersection lattice generated by the central arrangement $\L^L$
in $T_v$ (and not just the $M\in\L^L$ which are coming from a
residual coset as was written erroneously in
\cite{O}). With this understood
\cite[Proposition 3.12]{O} is correct as stated.
\begin{thm}
For all $t\in S$ the local traces $h\to\chi_t(h)$ are tempered.
\end{thm}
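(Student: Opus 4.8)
The plan is to show that each local trace $\chi_t$, for $t$ in the support of $\nu$, vanishes on the "tempered-negligible" part of $\H$, i.e. that its support (as a trace) consists of tempered representations. The natural strategy is to use the residue method inductively, exactly paralleling the proof of Theorem~\ref{thm:support}: the local trace $\chi_t$ is built (via the decomposition (\ref{eq:dec})) from the measures $\Xf_c$, whose support by Theorem~\ref{thm:support} is precisely the union of the tempered forms $L^t$ of the $\eta$-residual cosets. So the first step is to reduce the temperedness of $\chi_t$ to the geometric fact that $t$ lies on some $L^t$, and then to the representation-theoretic statement that a trace with such central-character support is a combination of tempered characters.

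First I would recall the decomposition $\tau(zh)=\sum_{c}\Yf_c^h(z|_{cT_u})$ together with Theorem~\ref{thm:abscont}, which says $\Yf_c^h$ is absolutely continuous with respect to $\nu_c$ with density bounded by $\Vert h\Vert_o$. This means that for $\nu$-almost every $t\in S$ the functional $h\mapsto\chi_t(h)$ is well-defined, positive, tracial, and satisfies $|\chi_t(h)|\le\Vert h\Vert_o$ — i.e. $\chi_t$ extends to a state on the reduced $C^*$-completion of $\H$ in the left regular representation. A positive trace that is continuous for the reduced $C^*$-norm decomposes through the reduced $C^*$-algebra, hence through the tempered dual; combined with the fact (Corollary~\ref{cor:projsup}) that the central character $W_0t$ of $\chi_t$ lies in the support of the Plancherel measure, and with the GNS construction used in the proof of Corollary~\ref{cor:Xmeas}, one gets that $\chi_t$ is a finite positive combination of irreducible unitary (hence tempered) characters with central character $W_0t$. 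This is the heart of the argument: the bound $|\chi_t(h)|\le\Vert h\Vert_o$ is exactly the statement that $\chi_t$ is a tempered trace.

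The technical device that is actually needed — and this is where I expect the real work, and where the paper flags an inaccuracy in \cite{O} and \cite{HOH0} — is the "weak constant term" along a standard parabolic subquotient algebra (the functor of \cite{DO}). To identify which tempered representations occur, one restricts a putative summand of $\chi_t$ to the parabolic subalgebra $\H_L$ associated to the residual coset $L$ through $t$, takes the weak constant part, and checks that it lands among the discrete series of $\H_L$ (because $r_L$ is an $R_L$-residual point, by Theorem~\ref{thm:support}(B), and residual points of $\H_L$ index the discrete series there). The corrected definition of the dual central configuration $\L_L$ in $T_L$ — now described as the full intersection lattice generated by the central arrangement $\L^L$, rather than only the residual members — is precisely what makes \cite[Proposition~3.12]{O} and hence this constant-term computation go through; the main obstacle is to carry out this constant-term / induction-in-stages bookkeeping carefully, matching the residue-theoretic filtration of $\chi_t$ against the representation-theoretic one, so that the tempered estimate survives passage to each parabolic stratum. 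Once that is in place, temperedness of $\chi_t$ follows for all $t\in S$, and as a byproduct the central supports of the discrete series and of the properly induced tempered families are seen to be disjoint, because a discrete series of $\H$ has central character an $R_0$-residual point while a proper induction sits on a positive-dimensional residual coset, and these are disjoint by Theorem~\ref{thm:support}(C).
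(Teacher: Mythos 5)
There is a genuine gap at the heart of your argument. Your key step is: ``$|\chi_t(h)|\le\Vert h\Vert_o$, so $\chi_t$ extends to a state on the reduced $C^*$-completion, hence decomposes through the tempered dual; the constituents are irreducible unitary, hence tempered.'' The implication ``unitary hence tempered'' is false (the trivial or Steinberg-type one-dimensional representations are unitary but not tempered for generic $q$), and the weaker implication you actually need --- that every irreducible representation weakly contained in the left regular representation satisfies Casselman's growth criterion --- is not available at this point: identifying the spectrum of $C^*_r(\H)$ with the set of Casselman-tempered representations is essentially equivalent to (and in this paper's logical order depends on) the statement you are trying to prove. So the $C^*$-algebraic shortcut is circular. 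What the estimate $|\chi_t(h)|\le\Vert h\Vert_o$ from Theorem \ref{thm:abscont} gives you is unitarity/positivity of the local traces, which the paper already has; it does not give temperedness.

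The paper's proof instead applies Casselman's criterion directly to the nonsymmetric measure $\Xf$: since $\Xf$ records the distribution of the $\A$-weights of the representations occurring in the $\chi_t$, temperedness of all $\chi_t$ is equivalent to the geometric statement that the real projection to $T_v$ of $\operatorname{Supp}(\Xf)$ lies in the closure of the antidual cone $T_v^-$. That statement is then read off from the residue computation: if a residual coset $wL$ has center $wc_L\notin\overline{T_v^-}$, the corresponding residue contribution $\Xf_{wL}$ vanishes, by \cite[Proposition 3.12]{O} (with the corrected definition of $\L_L$). This is the missing idea in your proposal. Also note that the weak constant term functor of \cite{DO}, which you place at the center of the proof, is used in the paper only for the subsequent corollary separating $S^d$ from $S^c$, not for the temperedness theorem itself.
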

\begin{proof}
The temperedness of the local traces is equivalent, by Casselman's
criterion, to the statement that the real projection to $T_v$ of
the support of the nonsymmetric measure $\Xf$
(which is associated to $\tau|_\A$
via the residue lemma)
is contained in the
closure of the anti-dual cone $T^-_v$ of the positive Weyl chamber.
This is a basic feature of the residue computations. Namely if a
residual coset $wL$ is such that its center $wc_L$ is not in the
closure of $T^-_v$ then $\chi_{wL}=0$ by \cite[Proposition 3.12]{O}.
\end{proof}
The next result gives an alternative for the proof of
\cite[Lemma 3.31]{O}.
We remark that the proof as in \cite{O}, directly using
\cite[Proposition 3.12]{O},
can be corrected too, but this becomes somewhat more complicated
by the weakening of \cite[Proposition 3.12]{O} mentioned above.
\begin{cor}
Let $S^d$ denote the union of the $W_0$-orbits of residual
points, and $S^c$ the union of the $W_0$-orbits of the tempered forms
of the residual cosets of positive dimension.
Then $S = S^d \sqcup S^c$, $S^d$ is the central support of the set
of discrete series representations of $\H$ and $S^c$ is the central
support of the set of irreducible tempered characters
which are not in the discrete series.
\end{cor}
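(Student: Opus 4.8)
The plan is to deduce the corollary from the temperedness theorem just proved together with the structural description of $S$ coming from Theorem \ref{thm:support}. First I would establish the decomposition $S = S^d \sqcup S^c$. By Theorem \ref{thm:support} the support $S$ of $\nu$ is the union of the tempered forms $L^t$ of the $\eta$-residual cosets $L\in\L$; splitting $\L$ into residual points (cosets of dimension $0$) and residual cosets of positive dimension gives $S = S^d \cup S^c$. The disjointness is exactly Theorem \ref{thm:3} (proved above in Section \ref{sec:root}): if $r$ is a residual point then $\{r\}$ is not the center of any positive-dimensional residual coset, and conversely by Theorem \ref{thm:support}(C) a tempered coset $L^t$ of positive dimension cannot be contained in (in particular cannot equal) the singleton $\{r\}$ for a residual point $r$ with the same $W_0$-orbit; hence no residual point lies on the tempered form of a higher-dimensional residual coset, giving $S^d \cap S^c = \emptyset$.

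Next I would identify $S^d$ with the central support of the discrete series. By definition (see the displayed decomposition \eqref{eq:dec}) for $\nu$-almost every $t\in S$ the local trace $\chi_t$ decomposes as a positive finite combination of irreducible tempered characters with central character $W_0t$, and by Corollary \ref{cor:projsup} the set of such irreducibles (over all $t\in S$) is the support of the Plancherel measure. A discrete series representation is, by definition, one whose matrix coefficients lie in $L^2$, equivalently (Casselman's criterion in its strong form) one for which the real part of the central character is a residual point — i.e. the central character lies over $S^d$. Thus the central characters of discrete series representations appearing in the Plancherel decomposition are exactly the points of $S^d$. For the converse inclusion one uses that every residual point $r$ does occur: by Theorem \ref{thm:support}(A) applied to $L=\{r\}$, the point $r$ lies in the support of $\Xf$, hence $W_0r$ lies in the support of $\nu$, so some irreducible tempered character with central character $W_0r$ occurs, and having a residual central character forces it (or a summand) to be discrete series. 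Finally $S^c$ is the central support of the remaining irreducible tempered characters: these are precisely the ones whose central character has real part the center of a positive-dimensional residual coset, and by the temperedness theorem proved just above every $\chi_t$ with $t\in S$ is tempered, while the non-discrete-series tempered characters appear over precisely the central characters in $S\setminus S^d = S^c$ (again using Theorem \ref{thm:support}(A) for positive-dimensional $L$ to see each point of $S^c$ is attained).

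The main obstacle I anticipate is the bookkeeping matching ``central support of a \emph{set} of representations'' with ``support of the measure $\nu$'' at the level of individual central characters rather than merely closures: one must be careful that no cancellation or measure-zero subtlety hides a residual point or a positive-dimensional residual coset from the Plancherel decomposition. This is handled by the positivity results — Corollary \ref{cor:Xmeas} (each $\Xf_c$ is a genuine positive measure whose support is exactly $L^t$ for the relevant $L$, by Theorem \ref{thm:support}(A) and Corollary \ref{cor:smooth}) together with the fact that $\nu_c$ is the restriction of $\Xf_c$ to $\Ze$ — so that the support of $\nu$ is genuinely $W_0 S$ with no loss, and every $L\in\L$ contributes. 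With these positivity inputs in hand the corollary is essentially a repackaging of Theorem \ref{thm:support}, Theorem \ref{thm:3}, and the temperedness theorem, and no further computation is required.
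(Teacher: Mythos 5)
Your decomposition $S = S^d \sqcup S^c$ is fine: the union is Theorem \ref{thm:support}, and the disjointness follows from Theorem \ref{thm:support}(C) applied with $M=\{r\}$ a residual point (note that Theorem \ref{thm:3} alone only excludes $r$ from being the \emph{center} of a positive-dimensional residual coset, not from lying anywhere on its tempered form, so it is the citation of (C) that does the work). The problem is in the second half. You assert that a representation is a discrete series ``equivalently (Casselman's criterion in its strong form)'' exactly when the real part of its central character is a residual point. That equivalence is not a definition and it is not Casselman's criterion: Casselman's criterion characterizes discrete series by the condition that \emph{all} $\A$-weights of the representation have real part strictly inside the antidual cone, which is a condition on the full weight set and not on the central character alone. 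The equivalence you invoke is precisely the content of the corollary you are trying to prove, so your argument is circular at this point.

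The two directions each require a genuine input that your proposal omits. For ``every irreducible tempered character with central character in $S^d$ is a discrete series,'' the paper argues by induction on the rank of $X$ using the theory of the weak constant term (\cite{DO}): an irreducible tempered $\pi$ that is not a discrete series is a summand of a tempered standard module induced from a discrete series of a \emph{proper} standard parabolic subquotient algebra, and by the induction hypothesis its central character then lands in $S^c$, hence (by disjointness) not in $S^d$. Your phrase ``having a residual central character forces it (or a summand) to be discrete series'' has no justification replacing this. For ``no discrete series is supported on $S^c$,'' the needed input is that $\pi$ is a discrete series if and only if $\pi$ carries positive mass (an atom) of the Plancherel measure (\cite[Theorem 2.25]{O}); since $\nu$ restricted to $S^c$ is the push-forward of smooth measures on positive-dimensional tori (Corollary \ref{cor:smooth}), it has no atoms there, so no discrete series can have central character in $S^c$. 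You gesture at the positivity results in your final paragraph, but you never make the atom-versus-absolutely-continuous argument, and without it and without the weak constant term induction the identification of $S^d$ with the central support of the discrete series is not established.
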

\begin{proof}
We have $S = S^d \sqcup S^c$ by Theorem \ref{thm:support}(C).
Suppose by induction on the rank of $X$ that the statement is true for
all proper standard parabolic subquotient algebras
$\H(X_P,Y_P,R_P,R_P^\vee)$.
By the theory of the weak constant term (see \cite{DO})
it follows that if an irreducible tempered representation
$\pi$ is not a discrete series representation then $\pi$
is a summand of a tempered standard module which is induced
from a discrete series representation of a proper standard
parabolic subquotient algebra $\H(X_P,Y_P,R_P,R_P^\vee)$.
By the induction hypothesis and the fact that
$S^d\cap S^c=\emptyset$ the central character of $\pi$
will belong to $S^c$.
Thus the tempered irreducible characters supported by $S^d$
are discrete series characters.

It is known that an irreducible representation $\pi$ of $\H$
is a discrete series iff the mass of $\pi$
in the Plancherel measure is positive (see \cite[Theorem 2.25]{O}).
Hence by the
absolute continuity of the measure $\nu$ (Corollary \ref{cor:smooth})
and the fact that $\nu$ is the push-forward of the Plancherel measure
(see Corollary \ref{cor:projsup}) to $\operatorname{MaxSpec}(\Ze)$
(equipped with the analytic topology) there can be no discrete series
supported by $S^c$.
Therefore the above and Theorem \ref{thm:support} imply that
the central support of the discrete series representations
is equal to $S^d$.
\end{proof}

\end{document}